\newcommand{\vf}{\boldsymbol{f}}
\newcommand{\vg}{\boldsymbol{g}}
\newcommand{\vF}{\boldsymbol{F}}
\newcommand{\vG}{\boldsymbol{G}}
\newcommand{\valpha}{\boldsymbol{\alpha}}
\newcommand{\vbeta}{\boldsymbol{\beta}}
\newcommand{\NN}{\mathbb{N}}
\newcommand{\ZZ}{\mathbb{Z}}
\newcommand{\RR}{\mathbb{R}}
\newcommand{\CC}{\mathbb{C}}
\newcommand{\ud}{\textup{d}}
\newcommand{\argmin}{\operatornamewithlimits{argmin}}
\newtheorem{theorem}{Theorem}[section]
\newtheorem{defn}[theorem]{Definition}
\newtheorem{thm}{Theorem}[section]
\newtheorem{lemma}{Lemma}
\renewcommand*{\theclaim}{0}
\theoremstyle{remark}
\newtheorem*{rem}{Remark}
\begin{document}

\begin{frontmatter}

\title{Convex Optimization approach to signals with fast varying instantaneous frequency}

\author[AM,MK]{Matthieu~Kowalski}
\ead{matthieu.kowalski@lss.supelec.fr}

\author[AM]{Adrien~Meynard}

\author[HTW]{Hau-tieng~Wu}
\ead{hauwu@math.toronto.edu}

\address[AM]{Laboratoire des Signaux et Syst\`emes -- Univ Paris-Sud -- CNRS -- CentraleSupelec}
\address[MK]{Parietal project-team, INRIA, Neurospin, CEA-Saclay, France}
\address[HTW]{Department of Mathematics, University of Toronto, Toronto, Ontario, Canada}

\begin{abstract}
Motivated by the limitation of analyzing oscillatory signals composed of multiple components with fast-varying instantaneous frequency, we approach the time-frequency analysis problem by optimization. Based on the proposed adaptive harmonic model, the time-frequency representation of a signal is obtained by directly minimizing a functional, which involves few properties an ``ideal time-frequency representation'' should satisfy, for example, the signal reconstruction and concentrative time frequency representation. FISTA (Fast Iterative Shrinkage-Thresholding Algorithm) is applied to achieve an efficient numerical approximation of the functional. We coin the algorithm as {\it Time-frequency bY COnvex OptimizatioN} (Tycoon). The numerical results confirm the potential of the Tycoon algorithm.
\end{abstract}

\begin{keyword}
Time-frequency analysis \sep Convex optimization \sep FISTA \sep Instantaneous frequency \sep Chirp factor
\end{keyword}
\end{frontmatter}

\section{Introduction}

Extracting proper features from the collected dataset is the first step toward data analysis. Take an oscillatory signal as an example. We might ask how many oscillatory components inside the signal, how fast each component oscillates, how strong each component is, etc. Traditionally, Fourier transform is commonly applied to answer this question. However, it has been well known for a long time that when the signal is not composed of harmonic functions, then Fourier transform might not perform correctly. Specifically, when the signal satisfies $f(t)=\sum_{k=1}^KA_k(t)\cos(2\pi\phi_k(t))$, where $K\in\NN$, $A_k(t)>0$ and $\phi'_k(t)>0$ but $A_k(t)$ and $\phi'_k(t)$ are not constants, the momentary behavior of the oscillation cannot be captured by the Fourier transform. 
A lot of efforts have been made in the past few decades to handle this problem. Time-frequency (TF) analysis based on different principals \cite{Flandrin:1999} has attracted a lot of attention in the field and many variations are available. Well known examples include short time Fourier transform (STFT), continuous wavelet transform (CWT), Wigner-Ville distribution (WVD), chirplet transform \cite{Mann_Haykin:1995}, S-transform \cite{Stockwell_Mansinha_Lowe:1996}, etc. 

While these methods are widely applied in many fields, they are well known to be limited, again, by the Heisenberg uncertainty principle or the mode mixing problem caused by the interference known as the Moire patterns \cite{Flandrin:1999}. 
To alleviate the shortage of these analyses, in the past decades several solutions were proposed. For example, the empirical mode decomposition (EMD) \cite{Huang_Shen_Long_Wu_Shih_Zheng_Yen_Tung_Liu:1998} was proposed to study the dynamics hidden inside an oscillatory signal; however, its mathematical foundation is still lacking at this moment and several numerical issues cannot be ignored. Variations of EMD, like \cite{Wu_Huang:2009,Oberlin_Meignen_Perrier:2012,Gilles:2013,Pustelnik_Borgnat_Flandrin:2014,Dragomiretskiy_Zosso:2014}, were proposed to improve EMD.
The sparsity approach \cite{Hou_Shi:2011,Hou_Shi:2013a,Hou_Shi:2013b,Tavallali_Hou_Shi:2014} and iterative convolution-filtering \cite{Lin_Wang_Zhou:2009,Huang_Wang_Yang:2009,Cicone_Liu_Zhou:2014,Cicone_Zhou:2015} are another algorithms proposed to capture the flavor of the EMD, which have solid mathematical supports. The problem could also be discussed via other approaches, like the optimized window approach \cite{Ricaud_Stempfel_Torresani:2014}, nonstationary Gabor frame \cite{Balazs_Dorfler_Jaillet_Holighaus_Velasco:2011}, ridge approach \cite{Ricaud_Stempfel_Torresani:2014},  the approximation theory approach \cite{Chui_Mhaskar:2015}, non-local mean approach \cite{Galiano_Velasco:2014} and time-varying autoregression and moving average approach \cite{DeLivera_Alysha_Hyndman_Snyder:2011}, to name but a few. 
Among these approaches, the reassignment technique \cite{Kodera_Gendrin_Villedary:1978,Auger_Flandrin:1995,Chassande-Mottin_Auger_Flandrin:2003,Auger_Chassande-Mottin_Flandrin:2012} and the synchrosqueezing transform (SST) \cite{Daubechies_Maes:1996,Daubechies_Lu_Wu:2011,Chen_Cheng_Wu:2014} have attracted more and more attention in the past few years. The main motivation of the reassignment technique is to improve the resolution issue introduced by the Heisenberg principal -- 
the STFT coefficients are reallocated in {\it both} frequency axis and time axis according to their local phase information, which leads to the reassignment technique. The same reassignment idea can be applied to a very general settings like Cohen's class, affine class, etc \cite{Flandrin:2001}. SST is a special reassignment technique; in SST, the STFT or CWT coefficients are reassigned {\it only} on the frequency axis \cite{Daubechies_Maes:1996,Daubechies_Lu_Wu:2011,Chen_Cheng_Wu:2014} so that the causality is preserved and hence a real time algorithm is possible \cite{Chui_Lin_Wu:2014}. The same idea could be applied to different TF representation; for example, the SST based on wave packet transform or S-transform is recently considered in \cite{Yang:2014,Huang_Zhang_Zhao_Sun:2015}.

By carefully examining these methods, we see that there are several requirements a time series analysis method for an oscillatory signal should satisfy. First, if the signal is composed of several oscillatory components with different frequencies, the method should be able to decompose them. Second, if the oscillatory component has time-varying frequency or amplitude, then how the frequency or amplitude change should be well approximated. Third, if any of the oscillatory component exists only over a finite period, the algorithm should provide a clear information about the starting point and ending point. Fourth, if we represent the oscillatory behavior in the TF plane, then the TF representation should be sharp enough and contain the necessary information. Fifth, the algorithm should be robust to noise. Sixth, the analysis should be adaptive to the signal we want to analyze.
However, not every method could satisfy all these requirements. For example, due to the Heisenberg uncertainty principle, the TF representation of the STFT is blurred; the EMD is sensitive to noise and is incapable of handling the dynamics of the signal indicated in the third requirement. In addition to the above requirements, based on the problem we have interest, other features are needed from the TF analysis method, and some of them might not be easily fulfilled by the above approaches. 

Among these methods, SST \cite{Daubechies_Maes:1996,Daubechies_Lu_Wu:2011,Chen_Cheng_Wu:2014} and its variation \cite{Li_Liang:2012,Yang:2014,Huang_Zhang_Zhao_Sun:2015,Oberlin_Meignen_Perrier:2015} could simultaneously satisfies these requirements, but it still has limitations. While SST could analyze oscillatory signals of ``slowly varying instantaneous frequency (IF)'' well with solid mathematical supports, the window needs to be carefully chosen if we want to analyze signals with fast varying IF \cite{Li_Sheu_Laughlin_Chu:2015}. Precisely, the conditions $|A_k'(t)|\leq\epsilon\phi'_k(t)$ and $|\phi''_k(t)|\leq\epsilon\phi'_k(t)$ are essential if we want to study the model $f(t)=\sum_{k=1}^KA_k(t)\cos(2\pi\phi_k(t))$ by the current SST algorithm proposed in \cite{Daubechies_Maes:1996,Daubechies_Lu_Wu:2011,Chen_Cheng_Wu:2014}. 
Note that these ``needs'' could be understood/modeled as some suitable constraints, and to analyze the signal and simultaneously fulfill the designed constraints, optimization is a natural approach. Thus, in this paper, based on previous works and the above requirements, we would consider an optimization approach to study the oscillatory signals, which not only satisfies the above requirements, but also captures other features. In particular, we focus on capturing the fast varying IF. In brief, based on the relationship among the oscillatory components, the reconstruction property and the sparsity requirement on the time-frequency representation, we suggest to evaluate the optimal TF representation, denoted as $F$, by optimizing the following functional
\begin{align}
  \mathcal{H}(F,G):= &\, \int \left| \Re\int F(t,\omega) \ud\omega-f(t)\right|^2 \ud t\nonumber\\
   &\quad+\mu \iint |\partial_t F(t,\omega) -i2\pi \omega F(t,\omega) +G(t,\omega) \partial_\omega F(t,\omega)|^2 \ud t \ud\omega\label{OurFunctional}\\
       &\quad+\lambda\|F\|_{L^1}+\gamma\|G\|_{L^2},\nonumber
\end{align} 
where $G$ is an auxiliary function which quantifies the potentially fast varying instantaneous frequency. When $G$ is fixed, it is clear that although $\mathcal{H(\cdot,G)}$ is not strictly convex, it is convex, so finding the minimizer is guaranteed. To solve this optimization problem, we propose to apply the widely applied and well studied algorithm {\it Fast Iterative Shrinkage-Thresholding Algorithm (FISTA)}. Embedded in an alternating minimization approach to estimate $G$ and $F$, we coin the algorithm as {\it Time-frequency bY COnvex OptimizatioN} (Tycoon).

The paper is organized in the following way. In Section \ref{Section:AdaptiveHarmonicModel}, we discuss the adaptive harmonic model to model the signals with a fast varying instantaneous frequency and its identifiability problem; in Section \ref{Section:OptimizationApproach}, the motivation of the optimization approach based on the functional (\ref{OurFunctional}) is provided; in Section \ref{Section:NumericalImplementation}, we discuss the numerical details of Tycoon. In particular, how to apply the FISTA algorithm to solve the optimization problem; in Section \ref{Section:NumericalResults}, numerical results of Tycoon are provided. 

\section{Adaptive Harmonic Model}\label{Section:AdaptiveHarmonicModel}

We start from introducing the model which we use to capture the signal with ``fast varying IF''. 
The oscillatory signals with fast varying IF is commonly encountered in practice, for example, the chirp signal generated by bird's song, bat's vocalization and wolf's howl, the uterine electromyogram signal, the heart rate time series of a subject with atrial fibrillation, the gravitational wave and the vibrato in violin play or human voice. More examples could be found in \cite{Flandrin:2001}. Thus, finding a way to study this kind of signal is fundamentally important in data analysis.
First, we introduce the following model to capture the signals with fast varying IF, which generalizes the $\mathcal{A}^{c_1,c_2}_{\epsilon,d}$ class considered in \cite{Daubechies_Lu_Wu:2011,Chen_Cheng_Wu:2014}:
\begin{defn}[Generalized intrinsic mode type function (gIMT)] 
Fix constants $0\leq \epsilon\ll 1$, $c_2>c_1>\epsilon$ and $c_2>c_3>\epsilon$. Consider the functional set $\mathcal{Q}_{\epsilon}^{c_1,c_2,c_3}$, which consists of functions in $C^1(\RR)\cap L^\infty(\RR)$ with the following format: 
\begin{equation}
g(t) = A (t)\cos(2\pi\phi(t)),
\end{equation}
which satisfies the following {\em regularity conditions}
\begin{equation}\label{AHM:Condition:Regularity}
A\in C^1(\RR)\cap L^\infty(\RR),\quad\phi\in C^3(\RR),
\end{equation}
the {\em boundedness conditions} for all $t\in\RR$
\begin{align}
&\inf_{t\in\RR} A(t)\geq c_1,\quad \inf_{t\in\RR}\phi'(t)\geq c_1,\label{AHM:Condition:Boundedness}\\  
&\sup_{t\in\RR} A(t)\leq c_2,\quad\sup_{t\in\RR}\phi'(t)\leq c_2,\quad\sup_{t\in\RR}|\phi''(t)|\leq c_3,\nonumber
\end{align}
and the {\em growth conditions} for all $t\in\RR$
\begin{equation}\label{AHM:Condition:Growth}
|A'(t)|\leq \epsilon\phi'(t),\quad |\phi'''(t)|\leq \epsilon\phi'(t).
\end{equation}
\end{defn}

\begin{defn}[Adaptive harmonic model] 
Fix constants $0\leq \epsilon\ll 1$, $d>0$ and $c_2>c_1>0$. Consider the functional set $\mathcal{Q}_{\epsilon,d}^{c_1,c_2,c_3}$, which consists of functions in $C^1(\RR)\cap L^\infty(\RR)$ with the following format: 
\begin{equation}
g(t) = \sum_{\ell=1}^K g_\ell(t),
\end{equation}
where $K$ is finite and $g_\ell(t)=A_\ell(t)\cos(2\pi\phi_\ell(t))\in \mathcal{Q}_{\epsilon}^{c_1,c_2,c_3}$; when $K>1$, the following {\em separation condition} is satisfied:
\begin{equation}\label{definition:adaptiveHarmonicMultiple}
\phi_{\ell+1}'(t)-\phi'_\ell(t)>d
\end{equation}
for all $\ell=1,\ldots,K-1$. 
\end{defn}

We call $\epsilon,d,c_1,c_2$ and $c_3$ {\it model parameters} of the $\mathcal{Q}_{\epsilon,d}^{c_1,c_2,c_3}$ model. Clearly, $\mathcal{Q}_{\epsilon}^{c_1,c_2,c_3}\subset \mathcal{Q}_{\epsilon,d}^{c_1,c_2,c_3}$ and both $\mathcal{Q}_{\epsilon}^{c_1,c_2,c_3}$ and $\mathcal{Q}_{\epsilon,d}^{c_1,c_2,c_3}$ are not vector spaces. Note that in the $\mathcal{A}_{\epsilon,d}^{c_1,c_2}$ model, the condition ``$\phi_\ell\in C^3(\RR)$, $\sup_{t\in\RR}|\phi_\ell''(t)|\leq c_2$ and $|\phi_\ell'''(t)|\leq \epsilon\phi_\ell'(t)$ for all $t\in\RR$'' is replaced by ``$\phi_\ell\in C^2(\RR)$ and $|\phi_\ell''(t)|\leq \epsilon \phi_\ell'(t)$ for all $t\in\RR$''. Thus, we say that the signals in $\mathcal{A}_{\epsilon,d}^{c_1,c_2}$ are oscillatory with slowly varying instantaneous frequency. Also note that $\mathcal{A}_{\epsilon,d}^{c_1,c_2}$ is not a subset of $\mathcal{Q}_{\epsilon,d}^{c_1,c_2,c_3}$. Indeed, for $A_\ell(t)\cos(2\pi\phi_\ell(t))\in \mathcal{A}_{\epsilon,d}^{c_1,c_2}$, even if $\phi_\ell\in C^3(\RR)$, the third order derivative of $\phi_\ell$ is not controlled. Also note that the number of possible components $K$ is controlled by the model parameters; that is, $K\leq \frac{c_2-c_1}{d}$. 

\begin{rem} 
{\em We have some remarks about the model. First, note that it is possible to introduce more constants to control $A(t)$, like $0<c_4\leq \inf_{t\in\RR} A(t) \leq \sup_{t\in\RR} A(t)\leq c_5$, in addition to the control of $\phi'$ by $c_1,c_2>0$ in the model. Also, to capture the ``dynamics'', we could consider a more general model dealing with the ``sudden appearance/disappearance'', like $g(t) = \sum_{\ell=1}^K g_\ell(t)\chi_{I_\ell}$, where $\chi$ is the indicator function and $I_\ell\subset \RR$ is connected and long enough. However, while these will not generate fundamental differences but will complicate the notation, to simplify the discussion, we stick to our current model. 

Second, we could consider different models to study the ``fast varying IF''. For example, we could replace the condition ``$|A'(t)|\leq \epsilon\phi'(t)$, $\phi_\ell\in C^3(\RR)$, $\sup_{t\in\RR}|\phi_\ell''(t)|\leq c_2$ and $|\phi_\ell'''(t)|\leq \epsilon\phi_\ell'(t)$ for all $t\in\RR$'' by the {\em slow evolution chirp} conditions \cite{Flandrin:2001}; that is ``$|A'(t)|\leq \epsilon A(t)\phi'(t)$, $\phi_\ell\in C^2(\RR)$ and $|\phi_\ell''(t)|\leq \epsilon \phi_\ell'(t)^2$ for all $t\in\RR$''. We refer the reader with interest in the detailed discussion about this ``slow evolution chirp model'' to \cite[Section 2.2]{Flandrin:2001}. A simplified slow evolution chirp model (with the condition $|A'(t)|\leq \epsilon \phi'(t)$) is recently considered in \cite{Liu_Hou_Shi:2015} for the study of the sparsity approach to TF analysis. We mention that the argument about the identifiability issue stated below for $\mathcal{Q}_{\epsilon,d}^{c_1,c_2,c_3}$ could be directly applied to state the identifiability issue of the slow evolution chirp model.} 
\end{rem}

Before proceeding to say what it means by ``instantaneous frequency'' or ``amplitude modulation'', we immediately encounter a problem which is understood as the {\it identifiability problem}. Indeed, we might have infinitely many different ways to represent a cosine function $g_0(t)=\cos(2\pi t)$ in the format $a(t)\cos(2\pi\phi(t))$ so that $a>0$ and $\phi'>0$, even though it is well known that $g_0(t)$ is a harmonic function with amplitude $1$ and frequency $1$. Precisely, there exist infinitely many smooth functions $\alpha$ and $\beta$ so that $g_0(t)=\cos(2\pi t)=(1+\alpha(t))\cos(2\pi(t+\beta(t)))$, and in general there is no reason to favor $\alpha(t)=\beta(t)=0$. Before resolving this issue, we could not take amplitude $1$ and frequency $1$ as reliable features to quantify the signal $g_0$ when we view it as a component in $\mathcal{Q}_{\epsilon}^{c_1,c_2,c_3}$. In \cite{Chen_Cheng_Wu:2014}, it is shown that if $g(t)=A(t)\cos(2\pi\phi(t))=[A(t)+\alpha(t)]\cos(2\pi[\phi(t)+\beta(t)])$ are both in $\mathcal{A}_{\epsilon,d}^{c_1,c_2}$, then $|\alpha(t)|\leq C\epsilon$ and $|\beta'(t)|\leq C\epsilon$, where $C$ is a constant depending only on the model parameters $c_1,c_2,d$. Therefore, $A_\ell$ and $\phi'_\ell$ are unique locally up to an error of order $\epsilon$, and hence we could view them as features of an oscillatory signal in $\mathcal{A}_{\epsilon,d}^{c_1,c_2}$. Here, we show a parallel theorem describing the identifiability property for the functions in the $\mathcal{Q}_{\epsilon,d}^{c_1,c_2,c_3}$ model. 

\begin{thm}[Identifiability of $\mathcal{Q}^{c_1,c_2,c_3}_\epsilon$]\label{theorem:identifiability:single}
Suppose a gIMT $a(t)\cos\phi(t)\in \mathcal{Q}^{c_1,c_2,c_3}_\epsilon$ can be represented in a different form which is also a gIMT in $\mathcal{Q}^{c_1,c_2,c_3}_\epsilon$; that is, $a(t)\cos\phi(t)=A(t)\cos\varphi(t)\in \mathcal{Q}^{c_1,c_2,c_3}_\epsilon$. Define $t_m:=\phi^{-1}((m+1/2)\pi)$ and $s_m:=\phi^{-1}(m\pi)$, $m\in\ZZ$, $\alpha(t):=A(t)-a(t)$, and $\beta(t):=\varphi(t)-\phi(t)$. Then we have the following controls of $\alpha$ and $\beta$ at $t_m$ and $s_m$
\begin{enumerate}
\item Up to a global factor $2l\pi$, $l\in\ZZ$, $\beta(t_n)=0$ for all $n\in\ZZ$;
\item $\frac{a(t_n)}{a(t_n)+\alpha(t_n)}=\frac{\phi'(t_n)+\beta'(t_n)}{\phi'(t_n)}$ for all $n\in\ZZ$. In particular, $\alpha(t_n)=0$ if and only if $\beta'(t_n)=0$ for all $n\in\ZZ$;
\item $\frac{a(s_n)}{a(s_n)+\alpha(s_n)}=\cos(\beta(s_n))$ for all $n\in\ZZ$. In particular, $\alpha(s_m)=0$ if and only if $\beta(s_m)=0$, $m\in\ZZ$.
\end{enumerate}
Furthermore, the size of $\alpha$ and $\beta$ are bounded by
\begin{enumerate}
\item $|\alpha(t)|< 2\pi \epsilon$ for all $t\in\RR$;
\item $|\beta''(t)|\leq 2\pi\epsilon$, $|\beta'(t)|\leq \frac{2\pi\epsilon}{c_1}$ and $|\beta(t)|\leq \frac{2\pi\epsilon}{c^2_1}$ up to a global factor $2l\pi$, $l\in\ZZ$, for all $t\in\RR$.
\end{enumerate}
\end{thm}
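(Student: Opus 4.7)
The plan is to split the proof into two parts: first establishing the pointwise identities at the critical points (parts 1--3), then propagating these to the global $O(\epsilon)$ bounds using the growth conditions.

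For the pointwise part, I would substitute the identity $a(t)\cos\phi(t) = A(t)\cos\varphi(t)$ at the special points. At $t_n$ where $\cos\phi(t_n) = 0$, positivity of $A$ (since $\inf A \geq c_1 > 0$) forces $\cos\varphi(t_n) = 0$, hence $\beta(t_n) \in \pi\ZZ$. Differentiating the identity once and evaluating at $t_n$ eliminates the cosine terms, giving $a(t_n)\phi'(t_n)\sin\phi(t_n) = A(t_n)\varphi'(t_n)\sin\varphi(t_n)$. Since $a, A, \phi', \varphi' > 0$ and $\sin\phi(t_n), \sin\varphi(t_n) \in \{\pm 1\}$, the two sines must coincide, so $\beta(t_n) \in 2\pi\ZZ$ (part 1) and the identity reduces to $a(t_n)\phi'(t_n) = A(t_n)\varphi'(t_n)$, which rearranges to part 2. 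At $s_n$, expanding $\cos\varphi(s_n) = \cos(\phi(s_n) + \beta(s_n))$ while using $\sin\phi(s_n) = 0$ yields $a(s_n) = A(s_n)\cos\beta(s_n)$, giving part 3.

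For the global bounds, the key tool is the growth conditions, which give $|\beta'''(t)| \leq |\phi'''(t)| + |\varphi'''(t)| \leq \epsilon(\phi'(t) + \varphi'(t))$ and $|\alpha'(t)| \leq \epsilon(\phi'(t) + \varphi'(t))$. After a global shift $\beta \mapsto \beta - 2l\pi$ normalizing $\beta(t_0) = 0$, I would iterate Rolle's theorem starting from $\beta(t_n) \in 2\pi\ZZ$: this produces zeros $\xi_n$ of $\beta'$ between consecutive $t_n$'s and then zeros $\eta_n$ of $\beta''$ between consecutive $\xi_n$'s. Integrating the $\beta'''$ bound from the nearest $\eta_n$ and using $\int \phi'\,dt = \phi(t) - \phi(\eta_n) \leq \pi$ over a half-period (and similarly for $\varphi'$) yields $|\beta''(t)| \leq 2\pi\epsilon$. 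Two further integrations using the spacing $t_{n+1} - t_n \leq \pi/c_1$ give $|\beta'(t)| \leq 2\pi\epsilon/c_1$ and $|\beta(t)| \leq 2\pi\epsilon/c_1^2$. Finally, for $\alpha$: part 3 together with $|\beta(s_n)| = O(\epsilon/c_1^2)$ gives $|\alpha(s_n)| = A(s_n)|1 - \cos\beta(s_n)| = O(\epsilon^2)$, and integrating $|\alpha'|$ from $s_n$ across a half-period contributes at most $\epsilon \cdot 2\pi$, producing $|\alpha(t)| < 2\pi\epsilon$ for $\epsilon$ small.

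The main obstacle is the coupling between $\alpha$ and $\beta$ in parts 2 and 3, together with the need to justify that the integers $l_n$ with $\beta(t_n) = 2l_n\pi$ are all equal so a single global shift suffices. Both difficulties are resolved by the bootstrap sequence: the $\beta'''$-based bound on $\beta''$ does \emph{not} require any $\alpha$-information, and the ensuing estimate $|\beta'| \leq 2\pi\epsilon/c_1$ integrated against the spacing $t_{n+1}-t_n \leq \pi/c_1$ gives oscillation of $\beta$ below $2\pi$ on each interval for small $\epsilon$, which forbids $l_n$ from jumping. Once $\beta$ is controlled globally, the quadratic relation $\alpha(s_n) = A(s_n)(1-\cos\beta(s_n))$ makes $\alpha$ a secondary quantity to estimate, closing the argument.
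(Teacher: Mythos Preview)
Your overall architecture matches the paper's two-stage proof, and two of your sub-arguments are in fact cleaner than the original. For part 2 you differentiate the identity once and read off $a(t_n)\phi'(t_n)=A(t_n)\varphi'(t_n)$ directly, whereas the paper obtains the same relation via a L'H\^opital-type limit of $\cos\varphi/\cos\phi$ as $t\to t_n$. For the $|\alpha|$ bound you run the estimates in the opposite order from the paper: you first control $\beta$ globally, then use $\alpha(s_n)=A(s_n)(1-\cos\beta(s_n))=O(\epsilon^2)$ as an anchor and integrate $|\alpha'|\le\epsilon(\phi'+\varphi')$; the paper instead bounds $\alpha$ \emph{first} by a separate contradiction argument (if $\alpha(t')>2\pi\epsilon$ one shows $\alpha>0$ on the whole oscillation interval and derives a sign contradiction in $\cos(\phi+\beta)/\cos\phi$). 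Both orderings are valid.

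There is, however, a genuine circularity in your handling of the constancy of the integers $l_n$ with $\beta(t_n)=2l_n\pi$. Your $\beta''$ bound is obtained by integrating $|\beta'''|$ from a zero $\eta_n$ of $\beta''$; you locate $\eta_n$ by iterated Rolle starting from $\beta(t_n)=\beta(t_{n+1})$. But that equality is precisely $l_n=l_{n+1}$, which is what you then claim follows from the resulting $|\beta'|$ bound. So the bootstrap, as written, assumes its own conclusion. The fix is elementary and uses only positivity and monotonicity, not the growth conditions: since $g=a\cos\phi=A\cos\varphi$ has no zero in $(t_n,t_{n+1})$ while $\varphi$ is strictly increasing with $\varphi(t_n),\varphi(t_{n+1})\in(\ZZ+\tfrac12)\pi$, the function $\varphi$ cannot cross any intermediate half-integer multiple of $\pi$, forcing $\varphi(t_{n+1})-\varphi(t_n)=\pi$ and hence $\beta(t_{n+1})=\beta(t_n)$. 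This zero-counting step is exactly the content of the paper's first lemma in the appendix; once it is inserted before your Rolle chain, the rest of your argument goes through (modulo harmless constants in the $\beta''$ bound, since the Rolle zeros $\eta_n$ lie in $(t_n,t_{n+2})$ rather than $(t_n,t_{n+1})$).
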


We mention that the controls of $\alpha$ and $\beta$ at $t_m$ and $s_m$ do not depend on the growth condition in (\ref{AHM:Condition:Growth}). However, to control the size of $\alpha$ and $\beta$, we need the growth condition in (\ref{AHM:Condition:Growth}).

\begin{thm}[Identifiability of $\mathcal{Q}^{c_1,c_2,c_3}_{\epsilon,d}$]\label{theorem:identifiability:multiple}
Suppose $f(t)\in \mathcal{Q}^{c_1,c_2,c_3}_{\epsilon,d}$ can be represented in a different form which is also in $\mathcal{Q}^{c_1,c_2,c_3}_{\epsilon,d}$; that is, 
\begin{align}
f(t)=\sum_{l=1}^Na_l(t)\cos\phi_l(t)=\sum_{l=1}^MA_l(t)\cos\varphi_l(t)\in \mathcal{Q}^{c_1,c_2,c_3}_{\epsilon,d}.
\end{align}
Then, when $d\geq \sqrt{2\ln c_2+\frac{1}{2}\ln c_3-\ln \epsilon}$, $M=N$ and for all $t\in\RR$ and for all $l=1,\ldots,N$, the following holds:
\begin{enumerate}
\item $|\phi_l(t)-\varphi_l(t)|=O(\sqrt{\epsilon})$ up to a global factor $2n\pi$, $n\in\ZZ$;
\item $|\phi'_l(t)-\varphi'_l(t)|=O(\sqrt{\epsilon})$;
\item $|\phi''_l(t)-\varphi''_l(t)|=O(\sqrt{\epsilon})$;
\item $|a_l(t)-A_l(t)|= O(\sqrt{\epsilon})$,
\end{enumerate}
where the constants on the right hand side are universal constants depending on the model parameters of $\mathcal{Q}_{\epsilon,d}^{c_1,c_2,c_3}$.
\end{thm}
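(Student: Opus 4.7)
The plan is to lift the identifiability question to the time-frequency plane via a chirplet-type transform whose chirp rate at $(t,\omega)$ tracks the local second derivative of the phase. Since every component $a_l(t)\cos\phi_l(t)\in\mathcal{Q}^{c_1,c_2,c_3}_\epsilon$ has $|\phi_l''|\le c_3$ and $|\phi_l'''|\le \epsilon\phi_l'$, on a Gaussian window of suitably tuned width each component looks like a pure quadratic-phase chirp up to an $O(\epsilon)$ cubic-in-time phase remainder. Once the components are separated peak-by-peak in the TF plane, Theorem~\ref{theorem:identifiability:single} can be applied locally to each matched pair, and the multi-component problem reduces to the single-component one, losing only a factor $\sqrt{\epsilon}$ from interference control.

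\textbf{Key steps.} First, for a Gaussian window $h_\sigma$ of width $\sigma$ and a chirp rate $c$, consider the chirplet transform $V_f(t,\omega,c)=\int f(s)h_\sigma(s-t)e^{-i2\pi[\omega(s-t)+c(s-t)^2/2]}\ud s$. Taylor-expanding $\phi_l(s)$ around $t$ and using $|A_l'|\le\epsilon\phi_l'$ and $|\phi_l'''|\le\epsilon\phi_l'$, I would show that the choice $c=\phi_l''(t)$, $\omega=\phi_l'(t)$ gives $V_f(t,\phi_l'(t),\phi_l''(t))=\tfrac12 a_l(t)e^{i2\pi\phi_l(t)}+E_l(t)$, where $E_l$ splits into a cubic-remainder term bounded by a polynomial in $\sigma$, $c_2$, $c_3$ times $\epsilon$, and a Gaussian leakage term from the other $k\ne l$, bounded by $\sum_{k\ne l}c_2\exp(-2\pi^2\sigma^2(\phi_k'-\phi_l')^2)\le Kc_2\exp(-2\pi^2\sigma^2 d^2)$. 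Balancing the two contributions to simultaneous size $O(\sqrt{\epsilon})$ by an appropriate choice of $\sigma$ depending on $c_2,c_3$ yields precisely the threshold $d\ge\sqrt{2\ln c_2+\tfrac12\ln c_3-\ln\epsilon}$ stated in the theorem, up to universal constants.

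\textbf{Matching and extracting estimates.} With this calibration, at every $t$ the chirplet transform of $f$ exhibits exactly $N$ peaks in $\omega$, of height bounded below by $\sim c_1/2$ and separated by gaps of size $\ge d$. Repeating the analysis for the representation with $M$ components and the same $f$ forces $M=N$ and induces a canonical matching between the peaks $\phi_l'(t)$ and $\varphi_l'(t)$. Equating the two $O(\sqrt{\epsilon})$-accurate expressions for peak location, height, and phase at a matched pair yields $|\phi_l'-\varphi_l'|=O(\sqrt{\epsilon})$ and $|a_l-A_l|=O(\sqrt{\epsilon})$; differentiating the chirplet representation in $t$, or repeating the argument with $c=\varphi_l''(t)$, gives $|\phi_l''-\varphi_l''|=O(\sqrt{\epsilon})$; integration in $t$ then gives the phase bound $|\phi_l-\varphi_l|=O(\sqrt{\epsilon})$ after absorbing a global $2n\pi$.

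\textbf{Main obstacle.} The hard part is the simultaneous tightness of the two error sources: the cubic Taylor remainder in the phase forces $\sigma$ to be small (it scales like $\epsilon\phi_l'\sigma^3$ inside an exponent), while the Gaussian leakage forces $\sigma d$ to be large, and the $\sqrt{\epsilon}$ rate survives only if the two meet at their optimal point. This accounting is what produces the mixed $\ln c_2$, $\ln c_3$ and $-\ln\epsilon$ structure of the threshold on $d$, and it also explains the loss from the $O(\epsilon)$ rate of Theorem~\ref{theorem:identifiability:single} to the $O(\sqrt{\epsilon})$ rate here: one pays $\sqrt{\epsilon}$ once, uniformly over $t$, to suppress inter-component interference on top of the single-component $\epsilon$ error. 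A secondary care point is that the matching of peaks must be globally consistent in $t$, which is ensured by the uniform gap $d$ together with the continuity of $\phi_l'$ and $\varphi_l'$.
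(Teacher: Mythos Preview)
Your overall architecture---use a Gaussian-windowed time-frequency transform to separate the components, control cross-terms through the gap $d$, then read off the parameters peak-by-peak---is indeed the paper's strategy. The paper uses the ordinary STFT followed by the spectrogram (computed via the Wigner--Ville identity) rather than a chirplet transform, but that difference is harmless. Two specific points in your accounting, however, do not hold up.

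\textbf{Where $\sqrt{\epsilon}$ actually comes from.} Your claim that one balances the cubic Taylor remainder and the Gaussian leakage to simultaneous size $O(\sqrt{\epsilon})$, and that this balancing is what produces the threshold $d\ge\sqrt{2\ln c_2+\tfrac12\ln c_3-\ln\epsilon}$, is inconsistent. If you actually carry out that balance (Taylor error $\sim\epsilon\sigma^3$, leakage $\sim e^{-c\sigma^2 d^2}$, both set to $\sqrt{\epsilon}$) you are driven to $\sigma\sim\epsilon^{-1/6}$ and hence $d^2\sim\epsilon^{1/3}\ln(1/\epsilon)$, which does not match the stated threshold. What the paper does is keep $\sigma$ of order one (in fact $\sigma=c_3$), so the Taylor error is $O(\epsilon)$, and then choose $d$ so that the cross-terms are \emph{also} $O(\epsilon)$; this is precisely the origin of the threshold. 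With both contributions at $O(\epsilon)$, the two spectrogram profiles $L(t_0,\eta)$ and $\widetilde L(t_0,\eta)$ agree to $O(\epsilon)$, and the $\sqrt{\epsilon}$ enters only at the very end: matching two Gaussian bumps that agree to $O(\epsilon)$ forces $(\phi_l'-\varphi_l')^2=O(\epsilon)$, hence $|\phi_l'-\varphi_l'|=O(\sqrt{\epsilon})$. The square root is a consequence of the quadratic shape of the peak, not of a balancing choice. Your proposal also does not need Theorem~\ref{theorem:identifiability:single} at all; the paper never invokes it in this proof.

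\textbf{The phase bound cannot come from integration.} You write that ``integration in $t$ then gives the phase bound $|\phi_l-\varphi_l|=O(\sqrt{\epsilon})$''. But $|\phi_l'(t)-\varphi_l'(t)|\le C\sqrt{\epsilon}$ for all $t\in\RR$ only yields $|\phi_l(t)-\varphi_l(t)-(\phi_l(0)-\varphi_l(0))|\le C\sqrt{\epsilon}\,|t|$, which is not uniformly $O(\sqrt{\epsilon})$ on $\RR$. The paper instead argues by contradiction from the identity $\sum_l a_l\cos\phi_l=\sum_l a_l\cos(\phi_l+\alpha_l)+O(\sqrt{\epsilon})$, using that the components oscillate at frequencies separated by at least $d$: if some $\alpha_L(t_0)$ were not $O(\sqrt{\epsilon})$, one can shift $t$ by a fraction of a period of the slower component to desynchronize the $L$-th term while keeping the others essentially fixed, violating the identity. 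You will need an argument of this type; integration alone does not close the estimate.
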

Note that in this theorem, the bound $\sqrt{\epsilon}$ and the lower bound of $d$ are by no means optimal since we consider the case when there are as many components as possible. We focus on showing that even when there are different representations of a given function in $\mathcal{Q}_{\epsilon,d}^{c_1,c_2,c_3}$, the quantities we have interest are close up to a negligible constant.
As a result, we have the following definitions, which generalize the notion of amplitude and frequency. 

\begin{defn}{[Phase function, instantaneous frequency, chirp factor and amplitude modulation]} 
Take a function $f(t)=\sum_{\ell=1}^Na_\ell(t)\cos\phi_\ell(t)\in\mathcal{Q}_{\epsilon,d}^{c_1,c_2,c_3}$. For each $\ell=1,\ldots,N$, the monotonically increasing function $\phi_\ell(t)$ is called the {\it phase function} of the $\ell$-th gIMT; the first derivative of the phase function, $\phi_\ell'(t)$, is called the {\it instantaneous frequency} (IF) of the $\ell$-th gIMT; the second derivative of the phase function, $\phi_\ell''(t)$, is called the {\it chirp factor} (CF) of the $\ell$-th gIMT; the positive function $A_\ell(t)$ is called the {\it amplitude modulation} (AM) of the $\ell$-th gIMT. 
\end{defn}
Note that the IF and AM are always positive, but usually not constant. On the other hand, the CF might be negative and non-constant. Clearly, when $\phi_\ell$ are all linear functions with positive slopes and $A_\ell$ are all positive constants, then the model is reduced to the harmonic model and the IF is equivalent to the notion frequency in the ordinary Fourier transform sense. The conditions $|A'_\ell(t)|\leq \epsilon \phi'_\ell(t)$ and $|\phi'''_\ell(t)|\leq \epsilon\phi_\ell'(t)$ force the signal to locally behave like a harmonic function or a chirp function, and hence the nominations.  By Theorem \ref{theorem:identifiability:single} and Theorem \ref{theorem:identifiability:multiple}, we know that the definition of these quantities are unique up to an error of order $\epsilon$. 

We could also model the commonly encountered ingredient in signal processing -- the shape function, trend and noise as those considered in \cite{Wu:2013,Chen_Cheng_Wu:2014}. However, to concentrate the discussion on the optimization approach to the problem, in this paper we focus only on the $\mathcal{Q}_{\epsilon,d}^{c_1,c_2,c_3}$ functional class.

\section{Optimization Approach}\label{Section:OptimizationApproach}
In general, given a function $f(t)=\sum_{k=1}^KA_k(t)\cos(2\pi\phi_k(t))$ so that $A_k(t)>0$ and $\phi'_k(t)>0$ for $t\in\RR$, we would expect to have the {\it ideal time-frequency representation} (iTFR), denoted as $R_f(t,\omega)$, satisfying 
\begin{align}
R_f(t,\omega)=\sum_{k=1}^KA_k(t)e^{i2\pi\phi_k(t)}\delta_{\phi'_k(t)}(\omega),
\end{align} 
where $\delta_{\phi'_k(t)}$ is the Dirac measure supported at $\phi'_k(t)$, so that we could well extract the features $A_k(t)$ and $\phi'_k(t)$ describing the oscillatory signal from $R_f$. Note that the iTFR is a distribution. In addition, the reconstruction and visualization of each component are possible. Indeed, we can reconstruct the $k$-th component by integrating along the frequency axis on the period near $\phi'_k(t)$. Indeed,
\begin{align}
A_k(t)\cos(2\pi\phi_k(t))=\Re\int_\RR R_f(t,\omega)\psi\left(\frac{\omega-\phi'_k(t)}{\theta}\right) \ud\omega,
\end{align}
where $\Re$ means taking the real part, $\theta\ll1$, $\psi$ is a compactly supported Schwartz function so that $\psi(0)=1$. Further, the visualization is realized via displaying the ``time-varying power spectrum'' of $f$, which is defined as 
\begin{align}
S_f(t,\omega):=\sum_{k=1}^KA^2_k(t)\delta_{\phi'_k(t)}(\omega),
\end{align} 
and we call it the {\it ideal time-varying power spectrum} (itvPS) of $f$, which is again a distribution.

To evaluate the iTFR for a function $f=\sum_{k=1}^KA_k(t)\cos(2\pi\phi_k(t))$, 
we fix $0<\theta\ll 1$ and consider the following {\it approximative iTFR with resolution $\theta$} 
\begin{align}
\tilde{R}_f(t,\omega)=\sum_{k=1}^KA_k(t)e^{i2\pi\phi_k(t)}\frac{1}{\theta}h\left(\frac{\omega-\phi'_k(t)}{\theta}\right),
\end{align}
where $t\in\RR$,
$\omega\in\RR$ and $h$ is a Schwartz function supported on $[-\sigma,\sigma]$, $\sigma>0$, so that $\int h=1$ and $\frac{1}{\epsilon}h\left(\frac{\cdot}{\epsilon}\right)$ converges to Dirac measure $\delta$ supported at $0$ weakly as $\epsilon\to 0$ and $\int h(x)\ud x=1$. Clearly, we know that $\tilde{R}_f$ is essentially supported around $(t,\phi'_k(t))$ for $k=1,\ldots,K$ and as $\theta\to 0$, $\tilde{R}_f$ converges to the iTFR in the weak sense. Also, we have for all 
$t\in\RR$ 
and $k=1,\ldots,K$, when $\theta$ is small enough so that $\sigma\theta>d$ is satisfied, where $d$ is the constant defined in the separation condition in (\ref{definition:adaptiveHarmonicMultiple}), we have
\begin{align}
\Re\int_{\phi'_k(t)-\sigma\theta}^{\phi'_k(t)+\sigma\theta} \tilde{R}_f(t,\omega) \ud \omega = A_k(t)\cos(2\pi\phi_k(t)).
\end{align}
Thus, the reconstruction property of iTFR is satisfied. In addition, the visualization property of itvPS can be achieved by taking
\begin{align}
\tilde{S}_f(t,\omega)=\left|\tilde{R}_f(t,\omega)\right|^2=\sum_{k=1}^K|A_k(t)|^2\frac{1}{\theta^2}\left|h\left(\frac{\omega-\phi'_k(t)}{\theta}\right)\right|^2,
\end{align}
where the equality holds due to the facts that $\phi'_k$ are separated and $\theta\ll 1$. 
Next we need to find other conditions about $\tilde{R}_f$. A natural one is observing its differentiation.
By a direct calculation, we know $\frac{1}{\theta^2}h'\left(\frac{\omega-\phi'_k(t)}{\theta}\right)= \partial_\omega\frac{1}{\theta}h\left(\frac{\omega-\phi'_k(t)}{\theta}\right)$, and hence we have
\begin{align}
\partial_t\tilde{R}_f(t,\omega)=&\sum_{k=1}^K A_k'(t)e^{i2\pi\phi(t)}\frac{1}{\theta}h\left(\frac{\omega-\phi'_k(t)}{\theta}\right) \\
&+ i2\pi\sum_{k=1}^KA_k(t)\phi_k'(t)e^{i2\pi\phi_k(t)}\frac{1}{\theta}h\left(\frac{\omega-\phi'_k(t)}{\theta}\right)\nonumber\\
& -\sum_{k=1}^K A_k (t)e^{i2\pi\phi(t)}\phi_k''(t)\frac{1}{\theta^2}h'\left(\frac{\omega-\phi'_k(t)}{\theta}\right) \nonumber \\
=&\sum_{k=1}^K A_k'(t)e^{i2\pi\phi(t)}\frac{1}{\theta}h\left(\frac{\omega-\phi'_k(t)}{\theta}\right) \nonumber\\
&+ i2\pi\sum_{k=1}^KA_k(t)\phi_k'(t)e^{i2\pi\phi_k(t)}\frac{1}{\theta}h\left(\frac{\omega-\phi'_k(t)}{\theta}\right)\nonumber\\
& +\partial_\omega\sum_{k=1}^K A_k (t)e^{i2\pi\phi(t)}\phi_k''(t)\frac{1}{\theta}h\left(\frac{\omega-\phi'_k(t)}{\theta}\right).\nonumber
\end{align}
By the fact that  $\omega \tilde{R}_f(t,\omega)= \sum_{k=1}^KA_k(t)\omega e^{i2\pi\phi_k(t)}\frac{1}{\theta}h\left(\frac{\omega-\phi'_k(t)}{\theta}\right)$,  we have
\begin{align}
&\partial_t\tilde{R}_f(t,\omega)-i2\pi\omega \tilde{R}_f(t,\omega)\label{OptimizationMotivationEquation1} \\
=& \sum_{k=1}^K A_k'(t)e^{i2\pi\phi(t)}\frac{1}{\theta}h\left(\frac{\omega-\phi'_k(t)}{\theta}\right) \nonumber\\
&\quad- i2\pi\sum_{k=1}^KA_k(t)(\omega-\phi_k'(t) )e^{i2\pi\phi_k(t)}\frac{1}{\theta}h\left(\frac{\omega-\phi'_k(t)}{\theta}\right)\nonumber\\
&\quad +\partial_\omega\sum_{k=1}^K A_k (t)e^{i2\pi\phi_k(t)}\phi_k''(t)\frac{1}{\theta}h\left(\frac{\omega-\phi'_k(t)}{\theta}\right)\nonumber. 
\end{align}

We first discuss the case when $f\in\mathcal{A}_{\epsilon,d}^{c_1,c_2}$; that is, $|\phi''_k(t)|\leq \epsilon |\phi'_k(t)|$ for all $t\in \RR$.
Note that by the assumption of frequency separation (\ref{definition:adaptiveHarmonicMultiple}) and the fact that $\theta\ll 1$, $[\phi'_l(t)-\theta \sigma,\phi'_l(t)+\theta \sigma]\cap [\phi'_k(t)-\theta \sigma,\phi'_k(t)+\theta \sigma]=\emptyset$ when $l\neq k$. Thus we have 
\begin{align}
\left|\sum_{k=1}^K A_k'(t)e^{i2\pi\phi_k(t)}\frac{1}{\theta}h\left(\frac{\omega-\phi'_k(t)}{\theta}\right) \right|^2=\sum_{k=1}^K |A_k'(t)|^2 \frac{1}{\theta^2}h^2\left(\frac{\omega-\phi'_k(t)}{\theta}\right).
\end{align}
Indeed, when $\omega\in [\phi'_l(t)-\theta \sigma,\phi'_l(t)+\theta \sigma]$, we have
\begin{align}
\left|\sum_{k=1}^K A_k'(t)e^{i2\pi\phi(t)}\frac{1}{\theta}h\left(\frac{\omega-\phi'_k(t)}{\theta}\right) \right|^2=   |A_l'(t)|^2 \frac{1}{\theta^2}h^2\left(\frac{\omega-\phi'_l(t)}{\theta}\right).
\end{align}
The same argument holds for the other terms on the right hand side of (\ref{OptimizationMotivationEquation1}). As a result, by a direct calculation, for any non-empty finite interval $I\subset \RR$, we have
\begin{align}\label{OptimizationMotivationEquationSlow}
&\left\|\sqrt{\theta}\left(\partial_t\tilde{R}_f(t,\omega)-i2\pi\omega \tilde{R}_f(t,\omega)\right)\right\|^2_{L^2(I\times [0,\infty))}\\
\leq&\,    \left(\epsilon^2 J_{0,0,2}+2\pi\theta \epsilon J_{1,0,2} +4\pi^2\theta^2J_{2,0,2}+\frac{\epsilon^2c_2^2}{\theta^2}J_{0,1,2} \right)c_2^2I \nonumber,
\end{align}
where $J_{n,m,l}:=\int \eta^n [\partial_\eta^mh(\eta)]^l\ud \eta$, where $n,m,l=0,1,\ldots$. Thus, when $\epsilon$ is small enough, $\left\|\sqrt{\theta}\left(\partial_t\tilde{R}_f(t,\omega)-i2\pi\omega \tilde{R}_f(t,\omega)\right)\right\|^2_{L^2(I\times [0,\infty))}$ is small. Here, we mention that as the dynamic inside the signal we have interest is ``momentary'', we would expect to have a small error between $\partial_t\tilde{R}_f(t,\omega)$ and $i2\pi\omega \tilde{R}_f(t,\omega)$ ``locally'', which however, might accumulate when $I$ becomes large.
This observation leads to a variational approach discussed in \cite{Daubechies_Lu_Wu:2011}. Precisely, the authors in \cite{Daubechies_Lu_Wu:2011} considered to minimize the following functional when the signal $f$ is observed on a non-empty finite interval $I$:
   \begin{align}
 \mathcal{H}_0(F):=  & \int_I \left| \Re\int F(t,\omega) d\omega-f(t)\right|^2 \ud t\\
   &\qquad+\mu\iint_I \left|\partial_t F(t,\omega)-i2\pi\omega F(t,\omega)\right|^2 \ud t \ud \omega.\nonumber
  \end{align}
The optimal $F$ would be expected to approximate the iTFR of $f\in \mathcal{A}_{\epsilon,d}^{c_1,c_2,c_3}$ well. However, that optimization was not numerically carried out in \cite{Daubechies_Lu_Wu:2011}.

Now we come back to the case we have interest; that is, $f\in\mathcal{Q}_{\epsilon,d}^{c_1,c_2,c_3}$. 
Since the condition on the CF terms, that is, $|\phi''_k(t)|\leq \epsilon |\phi'_k(t)|$, no longer holds, the above bound (\ref{OptimizationMotivationEquationSlow}) does not hold and minimizing the functional $\mathcal{H}_0$ might not lead to the right solution. In this case, however, we still have the following bound by the same argument as that of (\ref{OptimizationMotivationEquationSlow}):
\begin{align}
&\left\|\sqrt{\theta} \left(\partial_t \tilde{R}_f(t,\omega) -i2\pi \omega \tilde{R}_f(t,\omega) -\partial_\omega\sum_{k=1}^K A_k (t)e^{i2\pi\phi_k(t)}\phi_k''(t)\frac{1}{\theta}h\left(\frac{\omega-\phi'_k(t)}{\theta}\right)\right)\right\|^2_{L^2(I\times [0,\infty))}\nonumber\\
\leq&\,\left\|\sqrt{\theta}\sum_{k=1}^K \big(A_k'(t)- i2\pi A_k(t)(\omega-\phi_k'(t) )\big)e^{i2\pi\phi(t)}\frac{1}{\theta}h\left(\frac{\omega-\phi'_k(t)}{\theta}\right) \right\|^2_{L^2(I\times [0,\infty))}\\
\leq&\,   \left(\epsilon^2 J_{0,0,2}+2\pi \epsilon\theta J_{1,0,2} +4\pi^2\theta^2J_{2,0,2}\right)c_2^2I,\nonumber
\end{align}  
Thus, once we find a way to express the extra term $\partial_\omega\sum_{k=1}^K A_k (t)e^{i2\pi\phi_k(t)}\phi_k''(t)\frac{1}{\theta}h\left(\frac{\omega-\phi'_k(t)}{\theta}\right)$ in a convenient formula, we could introduce another conditions on $F$. 

In the special case when $K=1$; that is, $f=A(t)\cos(2\pi\phi(t))$, we know that 
\begin{align}\label{OptimizationMotivationEquation2}
\partial_\omega \left[A (t)e^{i2\pi\phi(t)}\phi''(t)\frac{1}{\theta}h\left(\frac{\omega-\phi'(t)}{\theta}\right)\right]=\phi''(t)\partial_\omega \tilde{R}_f(t,\omega).
\end{align}
Thus, we have   
\begin{align}
\theta \iint_I |\partial_t \tilde{R}_f(t,\omega) -i2\pi \omega \tilde{R}_f(t,\omega) +\phi''(t) \partial_\omega \tilde{R}_f(t,\omega)|^2 \ud t \ud\omega=O(\theta^2,\theta\epsilon,\epsilon^2).
\end{align}
Thus, we could consider the following functional
\begin{align}
\theta \iint |\partial_t F(t,\omega) -i2\pi \omega F(t,\omega) +\alpha(t) \partial_\omega F(t,\omega)|^2 \ud t \ud\omega,
\end{align}
where $\alpha(t)\in\RR$ is used to capture the CF term associated with the ``fast varying instantaneous frequency''. Thus, when $K=1$, we can capture more general oscillatory signals by considering the following functional when the signal is observed on a non-empty finite interval $I\subset \RR$:
\begin{align}
  \mathcal{H}(F,\alpha):= &\, \int_I \left| \Re\int F(t,\omega) \ud\omega-f(t)\right|^2 \ud t\nonumber\\
   &\quad+\mu \theta\iint_I |\partial_t F(t,\omega) -i2\pi \omega F(t,\omega) +\alpha(t) \partial_\omega F(t,\omega)|^2 \ud t \ud\omega \label{functional3}\\
    &\quad+\lambda\|F\|_{L^1(I\times \RR)}+\gamma\|\alpha\|_{L^2(I\times \RR)},\nonumber
\end{align} 
where $F\in L^2(I\times \RR)$ is the function defined on the TF plane restricted on $I\times\RR$. 
Note that the $L^1$ norm is another constraint we introduce in order to enhance the sharpness of the TF representation. Indeed, we would expect to introduce a sparse TF representation when the signal is composed of several gIMT.

In general when $K>1$, we cannot link $\partial_\omega\sum_{k=1}^K A_k (t)e^{i2\pi\phi_k(t)}\phi_k''(t)\frac{1}{\theta}h\left(\frac{\omega-\phi'_k(t)}{\theta}\right)$ to $\partial_\omega \tilde{R}_f(t,\omega)$ by any function on $t$ like that in (\ref{OptimizationMotivationEquation2}). In this case, we could expect to find another function $G\in L^2(I\times \RR)$ so that 
\begin{equation}
G(t,\omega)=\left\{
\begin{array}{ll}
\phi_k''(t) & \mbox{ when }\omega\in[\phi_k'(t)-\theta\sigma,\phi'_k(t)+\theta\sigma]\\
0&\mbox{ otherwise}.
\end{array}
\right.
\end{equation}
and hence $G(t,\omega)\partial_\omega \tilde{R}_f(t,\omega)=\partial_\omega\sum_{k=1}^K A_k (t)e^{i2\pi\phi_k(t)}\phi_k''(t)\frac{1}{\theta}h\left(\frac{\omega-\phi'_k(t)}{\theta}\right)$. 
Thus, we could consider minimizing the following functional for a given function $f$ observed on a non-empty finite interval $I\subset \RR$:
\begin{align}
  \mathcal{H}(F,G):= &\, \int \left| \Re\int F(t,\omega) \ud\omega-f(t)\right|^2 \ud t\nonumber\\
   &\quad+\mu\theta\iint_I |\partial_t F(t,\omega) -i2\pi \omega F(t,\omega) +G(t,\omega) \partial_\omega F(t,\omega)|^2 \ud t \ud\omega \label{functional4}\\ 
       &\quad+\lambda\|F\|_{L^1(I\times \RR)}+\frac{\gamma}{\sqrt{\theta}}\|G\|_{L^2(I\times \RR)}.\nonumber
\end{align} 
Here, the $L^2$ penalty term $\|G\|_{L^2}$ has $1/\sqrt{\theta}$ in front of it since 
\begin{equation}
\|G\|_{L^2(I\times [0,\infty))}=\sqrt{2\theta\sigma}\sum_{k=1}^K\|\phi_k''\|_{L^2(I\times [0,\infty))}.
\end{equation}
Thus, the $L^2$ penalty term does not depend on $\theta$.
It is also clear that the $L^1$ penalty term in the above functional does not depend on $\theta$ as we have
\begin{equation}
\int\int_I\left|\sum_{k=1}^K A_k (t)e^{i2\pi\phi(t)}\frac{1}{\theta}h\left(\frac{\omega-\phi'_k(t)}{\theta}\right) \right|\ud t\ud \omega=\sum_{k=1}^K\|A_k(t)\|_{L^1(I)}.
\end{equation}

\section{Numerical Algorithm}\label{Section:NumericalImplementation}

We consider the following functionals associated with (\ref{functional3}):

\begin{align}
\mathcal{H}(F,\alpha) & = \int_{\mathbb{R}}\left|\Re \int_{\mathbb{R}}F(t,\omega)d\omega -f(t)\right|^{2}\ud t\\
+ & \tilde \mu \left( \tilde \lambda \iint_{\mathbb{R}}\left|\partial_t F(t,\omega) - i2\pi\omega F(t,\omega)+\alpha(t) \partial_\omega F(t,\omega)\right|^{2}\ud t\ud\omega 
+ (1-\tilde \lambda)\|F\|_{L^1} \right) 
+ \gamma\|\alpha\|_{L^2}^2 \nonumber\\
 			  & = \mathcal{G}(F,\alpha) + \Psi(F,\alpha)\nonumber,
\end{align}
where	
\begin{align}
\mathcal{G}(F,\alpha):=& \int_{\mathbb{R}}\left|\Re \int_{\mathbb{R}}F(t,\omega)d\omega -f(t)\right|^{2}\ud t\\
&+\tilde \mu \tilde \lambda \iint_{\mathbb{R}}\left|\partial_t F(t,\omega) - i2\pi\omega F(t,\omega)+\alpha(t) \partial_\omega F(t,\omega)\right|^{2}\ud t\ud\omega,\nonumber
\end{align}
\begin{align}
\Psi(F,\alpha):=\tilde \mu(1-\tilde \lambda)\|F\|_{L^1}+\gamma\|\alpha\|_{L^2}^2,
\end{align}
$t$ is the time and $\omega$ is the frequency. The numerical implementation of (\ref{functional4}) follows the same lines while we have to discretize a two dimensional function $G$. 
Compared to \eqref{functional3}, we have redefined the role of the hyperparameter $\mu$ and $\lambda$. Here, $\tilde \mu\in\mathbb{R}_+$ balance between the data fidelity term $\int_{\mathbb{R}}\left|\Re \int_{\mathbb{R}}F(t,\omega)d\omega -f(t)\right|^{2}\ud t$ allowing the reconstruction, and the regularization term which controls the variation on the derivatives, and the sparsity of the solution. The parameter $\tilde \lambda \in [0,1]$ allows one to balance between the sparsity prior and the constraint of the derivatives. This choice will simplify the choice of the regularization parameters.
Clearly, by setting $\mu = \tilde\mu\tilde \lambda$ and $\lambda = \tilde \mu(1-\tilde\lambda) $ we recover the original formulation \eqref{functional3}.

\subsection{Numerical discretization}
Numerically, we consider the following discretization of $F$ by taking $\Delta_t>0$ and $\Delta_\omega>0$ as the sampling periods in the time axis and frequency axis. We also restrict $F$ to time $[0, M\Delta_t]$ and 
to the frequencies $[-N\Delta_\omega, N\Delta_\omega]$.
Then, we discretize $F$ as $\boldsymbol{F}\in \mathbb{C}^{(N+1)\times(M+1)}$ and $\alpha$ as $\valpha\in \mathbb{R}^{M+1}$, where
\begin{align}
\boldsymbol{F}_{n,m}=F(t_{m},\omega_{n}),\quad \valpha_m=\alpha(t_m),
\end{align}
$t_m:=m\Delta_t$, $\omega_n:=n\Delta_\omega$, $n=-N,\ldots,N$ and $m=0,1,\ldots, M$. The observed signal $f(t)$ is discretized as a $(M+1)$-dim vector $\boldsymbol{f}$, where
\begin{align}
\boldsymbol{f}_l=f(t_l).
\end{align}
Note that the sampling period of the signal $\Delta_t$ and $M$ most of time are determined by the data collection procedure. We could set $\Delta_\omega=\frac{1}{M\Delta_t}$ and $N=\lceil M/2\rceil$ suggested by the Nyquist rate in the sampling theory.  

Next, using the rectangle method, we could discretize $\mathcal{G}(F,\alpha)$ directly by  
\begin{align}
\mathcal{G}(\boldsymbol{F},\valpha):=&\sum\limits_{m=0}^{M}\left|\sum\limits_{n=-N}^{N}2\Re\left(F(t_{m},\omega_{n})\right)\Delta_\omega-f(t_{m})\right|^{2}\Delta_t \\
&+\mu\sum\limits_{m=0}^{M}\sum\limits_{n=-N}^{N}\left|\partial_{t}F(t_{m},\omega_{n})-i2\pi\omega_{n}F(t_{m},\omega_{n})+\alpha(t_m)\partial_{\omega}F(t_{m},\omega_{n})\right|^{2}\Delta_t\Delta_\omega .\nonumber
\end{align}
%where 
The partial derivative $\partial_{t}F$ can be implemented by the straight finite difference; that is, take a $(M+1)\times(M+1)$ finite difference matrix $\boldsymbol{D}_{M+1}$ so that $\boldsymbol{F}\boldsymbol{D}_{M+1}$ approximates the discretization of $\partial_{t}F$. However, this choice may lead to numerical instability. Instead, one can implement the partial derivative in the Fourier domain, using that 
$\partial_{t} F(t_m,\omega_n) = \mathcal{F}^{-1}\left(i2\pi\xi_k \hat F(\xi_k,\omega_n)\right)[m]$, where $\hat F = \mathcal{F}(F)$ and $\mathcal{F}$ denotes the finite Fourier transform. 
For the sake of simplicity, we still denote by $\partial_t$ or $\partial_\omega$ the discretization operator in the discret domain, whatever the chosen method (finite difference or in the Fourier domain). 
Also denote $\boldsymbol{1}=(1,\ldots,1)^T\in \mathbb{R}^{M+1}$. In the matrix form, the functional $\mathcal{G}(F,\alpha)$ is thus discretized as
\begin{align}
\mathcal{G}(\vF,\valpha)=\Delta_t\left\|\mathcal{A}\vF-\vF\right\|^{2}+\Delta_t\Delta_\omega\mu\left\|\mathcal{B}(\vF,\valpha)\right\|^{2},
\end{align}
where
\begin{align}
\begin{array}{cccc}
\mathcal{A}  : & \mathbb{C}^{(M+1)\times(M+1)} & \to & \mathbb{R}^{M+1} \\
  & \vF & \mapsto & 2\Re\left(\boldsymbol{1}^T\vF \right)\Delta_\omega\,,\\
\end{array}
\end{align}
\begin{align}
\begin{array}{ccccc}
\mathcal{B}  : & \mathbb{C}^{(M+1)\times(M+1)}\times \CC^{M+1} & \to & \mathbb{C}^{(N+1)\times(M+1)} \\
  & (\vF,\valpha) & \mapsto & \partial_t\vF -i2\pi\boldsymbol{\omega}\vF+\partial_\omega\vF\text{diag}(\valpha)\,, \\
\end{array}
\end{align}
and $\boldsymbol{\omega}=\text{diag}(-N\Delta_\omega,\ldots,0,\Delta_\omega,2\Delta_\omega,\ldots,N\Delta_\omega)\in \mathbb{R}^{(M+1)\times(M+1)}$.

\subsection{Expression of the gradient operator}
Denote $\mathcal{G}_{\valpha}(\vF):= \vF \mapsto \mathcal{G}(\vF,\valpha)$ and $\mathcal{B}_{\valpha}(\vF):= \vF \mapsto
 \mathcal{B}(\vF,\valpha)$; that is, $\valpha$ is fixed. Similarly, define $\mathcal{G}_{\vF}(\valpha):=\valpha \mapsto 
 \mathcal{G}(\vF,\valpha)$ and $\mathcal{B}_{\vF}(\valpha):=\valpha \mapsto \mathcal{B}(\vF,\valpha)$; that is, $\vF$ is fixed. 
 We will evaluate the gradient of $\mathcal{G}_{\valpha}$ and $\mathcal{G}_{\vF}$ after discretization for the gradient decent 
 algorithm. Take $\vG\in\CC^{(M+1)\times(M+1)}$.
The gradient of $\mathcal{G}_{\valpha}$ after discretization is evaluated by
\begin{align}
\nabla\mathcal{G}_{\valpha}|_{\vF}\vG&=\,\lim_{h\to 0}\frac{\mathcal{G}_{\valpha}(\vF+h\vG)-\mathcal{G}_{\valpha}(\vF)}{h}\\
&=\,2\Delta_t(\mathcal{A}\vF-\vf)^T\mathcal{A}\vG+2\Delta_t\Delta_\omega\mu\langle\mathcal{B}_{\valpha}\vF, \mathcal{B}_{\valpha}\vG\rangle\nonumber\\
&=\,\langle 2\Delta_t\mathcal{A}^*(\mathcal{A}\vF-\vf)+2\Delta_t\Delta_\omega\mu\mathcal{B}_{\valpha}^*\mathcal{B}_{\valpha}\vF,\vG\rangle.\nonumber
\end{align}
As a result, we have
\begin{align}
\nabla\mathcal{G}_{\valpha}|_{\vF}= 2\Delta_t\mathcal{A}^*(\mathcal{A}\vF-\vf)+2\Delta_t\Delta_\omega\mu\mathcal{B}_{\valpha}^*\mathcal{B}_{\valpha}\vF.
\end{align}
where $\mathcal{A}^{*}$ and $\mathcal{B}_{\valpha}^{*}$ are adjoint operators of $\mathcal{A}$ and $\mathcal{B}_{\valpha}$ respectively.
Now we expand $\mathcal{A}^{*}$ and $\mathcal{B}_{\valpha}^{*}$. 
Take $\vg\in \mathbb{R}^{M+1}$. We have
\begin{align}
\left\langle \mathcal{A}\vF, \vg\right\rangle &= \sum_{m=0}^{M}\left(\sum_{n=-N}^{N}2\Re\vF_{n,m}\Delta_\omega\right)\vg_m\\
&=\sum_{m=0}^{M}\sum_{n=-N}^{N}2\Re\vF_{n,m}\Re(\Delta_\omega \vg_m),\nonumber
\end{align}
and
\begin{align}
\left\langle \vF, \mathcal{A}^{*}\vg\right\rangle=&\,\sum_{m=0}^{M}\sum_{n=-N}^{N}\vF_{n,m}\overline{(\mathcal{A}^{*}\vg)}_{n,m}\\
=&\,\sum_{m=0}^{M}\sum_{n=-N}^{N}2\Re\vF_{n,m}\Re(\mathcal{A}^{*}\vg)_{n,m}+\sum_{m=0}^{M}\sum_{n=-N}^{N}\Im\vF_{n,m}\Im(\mathcal{A}^{*}\vg)_{n,m}\nonumber\\
&+i\sum_{m=0}^{M}\sum_{n=-N}^{N}\Im\vF_{n,m}2\Re(\mathcal{A}^{*}\vg)_{n,m}-i\sum_{m=0}^{M}\sum_{n=-N}^{N}\Re\vF_{n,m}\Im(\mathcal{A}^{*}\vg)_{n,m}.\nonumber
\end{align}
Since $\left\langle \mathcal{A}\vF, \vg\right\rangle=\left\langle \vF, \mathcal{A}^{*}\vg\right\rangle$ for all $\vF$ and $\vg$, we conclude that
\begin{equation}
\begin{array}{ccccc}\label{formula:Astar}
\mathcal{A}^{*}  : & \mathbb{R}^{M+1} & \to & \mathbb{C}^{(M+1)\times(M+1)} \\
  & \vg & \mapsto & 2\Delta_\omega
	\begin{pmatrix}
   \vg_1 &\ldots& \vg_{M+1}\\
   \vdots&&\vdots\\
	 \vg_1&\ldots& \vg_{M+1}
	\end{pmatrix}
	\\
\end{array}.
\end{equation}
To calculate $\mathcal{B}_{\valpha}^{*}$, by a direct calculation we have
\begin{align}
\langle \mathcal{B}_{\valpha}\vF, \vG\rangle&=\langle \partial_t\vF -i2\pi\boldsymbol{\omega}\vF+\partial_\omega\vF\text{diag}(\valpha), \vG\rangle\\
&=\langle \vF, -\partial_t\vG+i2\pi\boldsymbol{\omega}\vG-\partial_\omega\vG\text{diag}(\valpha) \rangle\nonumber\\
&=\left\langle \vF, \mathcal{B}_{\valpha}^{*}\vG\right\rangle,\nonumber
\end{align}
where $\vG\in\mathbb{C}^{(M+1)\times(M+1)}$. Thus, we conclude that
\begin{equation}\label{formula:Bstar}
\begin{array}{ccccc}
\mathcal{B}_{\valpha}^{*}  : & \mathbb{C}^{(M+1)\times(M+1)} & \to & \mathbb{C}^{(M+1)\times(M+1)} \\
  & \vG & \mapsto &  -\partial_t\vG+i2\pi\boldsymbol{\omega}\vG-\partial_\omega\vG\text{diag}(\valpha).
\end{array}
\end{equation}
As a result, the first part of $\nabla\mathcal{G}_{\valpha}|_{\vF}$, $2\Delta_t\mathcal{A}^*(\mathcal{A}\vF-\vf)$, can be numerically expressed as 
\begin{equation}
	\label{eq:grad1}
4\Delta_t\Delta_\omega
\begin{pmatrix}
\Delta_\omega\Re \sum\limits_{n=0}^{N}\vF_{n,1}-\vf_{1}&\ldots& \Delta_\omega\Re \sum\limits_{n=0}^{N}\vF_{n,M+1}-\vf_{M+1} \\
 \vdots &&\vdots\\
 \Delta_\omega\Re \sum\limits_{n=0}^{N}\vF_{n,1}-\vf_{1}&\ldots&  \Delta_\omega\Re \sum\limits_{n=0}^{N}\vF_{n,M+1}-\vf_{M+1} 
\end{pmatrix}\in \RR^{(M+1)\times (M+1)}.
\end{equation}
and the second term 
\begin{align}
	\label{eq:grad2}
	2\Delta_t\Delta_\omega\mu\mathcal{B}^*\mathcal{B}\vF & = 2\Delta_t\Delta_\omega\mu\left( -\partial_t\partial_t\vF+i4\pi\boldsymbol{\omega}\partial_t\vF-\partial_t\partial_\omega\vF\text{diag}(\valpha)+ 4\pi^2\boldsymbol{\omega}^2\vF\right. 
	\\
	&\quad \left.+i 2\pi\boldsymbol{\omega}\partial_\omega\vF\text{diag}(\valpha) - \partial_\omega\partial_t\vF\text{diag}(\valpha)+i2\pi\partial_\omega \boldsymbol{\omega}\vF\text{diag}(\valpha) - \partial_\omega\partial_\omega\vF\text{diag}(\valpha)\right) .
	\nonumber
\end{align}

Similarly, by taking $\vbeta\in\CC^{M+1}$, the gradient of $\mathcal{G}_{\vF}$ at $\valpha$ after discretization is evaluated by
\begin{align}
\nabla\mathcal{G}_{\vF}|_{\valpha}\vbeta&=\,\lim_{h\to 0}\frac{\mathcal{G}_{\vF}(\valpha+h\vbeta)-\mathcal{G}_{\vF}(\valpha)}{h}\\
&=\,\text{tr}((\partial_\omega\vF\text{diag}(\vbeta))^*(\partial_\omega\vF\text{diag}(\valpha))\nonumber\\
&=\,-\vbeta^*\text{tr}(\vF^*\partial_\omega\partial_\omega\vF\text{diag}(\valpha)).\nonumber
\end{align}
Thus, we have
\begin{align}
\nabla\mathcal{G}_{\vF}|_{\valpha}=-\text{tr}(\vF^*\partial_\omega\partial_\omega\vF\text{diag}(\valpha))\in \CC^{M+1},
\end{align}
where
\begin{align}
(\nabla\mathcal{G}_{\vF}|_{\valpha})_{m}=\alpha(t_m)\sum_{n=-N}^{N}[\partial_\omega F(t_m,\omega_n)]^2.
\end{align}

\subsection{Minimize the functional $\mathcal{H}(F,\alpha)$}

We now have all the results needed to propose an optimization algorithm to minimize the functional $\mathcal{H}(F,\alpha)$. The minimization of $\mathcal{H}(F,G)$ in (\ref{functional4}) is the same so we skip it.
The functional we would like to minimize depends on two terms, $F$ and $\alpha$. While the PALM algorithm studied in~\cite{bolte2014proximal} provides a simple procedure to minimize~\eqref{functional3}, this algorithm appeared to be too slow in practice for this problem. Since the functional spaces $F$ and $\alpha$ live are convex, we will therefore minimize the functional alternately by optimizing one of these two terms when the other one is fixed; that is,
\begin{align}\label{Optimization:Alternating}
\left\lbrace 
\begin{array}{l}
F_{k+1} = 	\arg\min\limits_{F} \mathcal{H}(F,\alpha_{k}) \\
\alpha_{k+1} = 	\arg\min\limits_{\alpha} \mathcal{H}(F_{k+1},\alpha).
\end{array}
\right. 
\end{align}
with $\alpha_0 = 0$ and $F_0=0$ are used to initialize the algorithm. A discussion on convergence results of this classical Gauss-Seidel method can be found in~\cite{bolte2014proximal}. 

As we will see in next subsections, if we can reach the global minimizer of $\alpha\mapsto\mathcal{H}(F_{k+1},\alpha)$, finding a minimizer of $F\mapsto\mathcal{H}(F,\alpha_{k})$ requires the use of an iterative algorithm. We provide in~\ref{appendix:convergence} a convergence result of the practical algorithm we propose.

\subsection{Minimization of $\mathcal{H}_{\alpha} := \mathcal{H}(\cdot,\alpha)$}

When $\alpha$ is fixed, $\mathcal{H}_{\alpha}$ is a convex non smooth functional, involving a convex and Lipschitz differentiable term (the function $\mathcal{G}_{\alpha}:=\mathcal{G}(\cdot,\alpha)$), and a convex but non-smooth term (the $\Psi_{\alpha}:= \Psi(\cdot,\alpha)$ regularizer). Popular proximal algorithms such as forward-bacward~\cite{combettes2005signal} or the Fast Iterative Shrinkage/Thresholding Algorithm (FISTA)~\cite{Beck_Teboulle:2009,chambolle2014convergence} can then be employed. FISTA has the great advantage to reach the optimal rate of convergence; that is, if $\check{\vF}$ is the convergence point, $\mathcal{H}_\alpha(\vF_{k}) - \mathcal{H}_\alpha(\check{\vF}) = \mathcal{O}\left(\frac{1}{k^2}\right)$, while the forward-backward procedure converge in $\mathcal{O}\left(\frac{1}{k}\right)$ (see~\cite{tseng2010approximation} for a great review of proximal methods and their acceleration). This speed of convergence is usually observed in practice~\cite{loris2009performance}, and has been confirmed in our experiments (not shown in this paper). Contrary to the forward-backward, one limitation of the original FISTA~\cite{Beck_Teboulle:2009} is that the convergence is proven only on the sequence $\left(\mathcal{H}_{\alpha}(\vF_k)\right)_{k}$ rather than on the iterates $(\vF_k)_k$. 
However, the latest study~\cite{chambolle2014convergence} gives a version of FISTA, which fills in this gap while maintaining the same convergence rate. As far as we know, it is the only algorithm with these two properties, and then will be use in the following. Yet another shortcoming of the original FISTA is that the algorithm does not produce a monotonic decreasing of the functional, but a monotonic version is available~\cite{beck2009fast} and is used in this paper.

In short, FISTA relies on three steps
\begin{enumerate}
	\item A gradient descent step on the smooth term $\mathcal{G}_{\valpha}$;
	\item A soft-shrinkage operation, known as the proximal step;
	\item A relaxation step.
\end{enumerate}
The algorithm is summarized in Algorithm~\ref{alg:FISTA}. In practice, the Lipschitz constant can be evaluated using a classical power iteration procedure, or using a backtracking step inside the algorithm (see~\cite{Beck_Teboulle:2009} for details). $\nabla\mathcal{G}_\alpha$ is given by Eq.~\eqref{eq:grad1} and Eq.~\eqref{eq:grad2}.

Moreover, when the signal $\vf$ is real and $\alpha$ is real, we can limit the optimization to the positive frequencies such that $\vF\in\CC^{(N+1)\times(M+1)}$, with $N=\lceil M/2\rceil$. Indeed, one can show that there exists a solution $\vF$ which has an Hermitian symmetry properties, i.e. such that $F(t,\omega) = \overline{F(t,-\omega)}$. In order to prove this result, we remark that we have
\begin{align}
	\overline{\nabla\mathcal{G}_\alpha|_{\vF}(t,-\omega)} = \nabla\mathcal{G}_\alpha|_{\overline{\vF}}\left(t,-\omega\right),
\end{align}
which can be easily checked thanks to Eq.~\eqref{eq:grad1} and~\eqref{eq:grad2}. Then, if $\vF_{0}$ is Hermitian symetric, one can prove by induction that at each iteration, $\vF_{k}$ is Hermitian symmetric.

\begin{algorithm}[h!]
\begin{algorithmic}
\STATE Choose a stopping value $\epsilon$.
\STATE The initial values are $\vF_{0}\in\CC^{(N+1)\times(M+1)}$, $z_{0}=\vF_0$ 
\STATE Evaluate the Lipschitz constant $L=\|\nabla\mathcal{G}_{\alpha}\|^2$ by power iterations.
\WHILE{$\frac{\|F_{k+1} - F_{k}\|}{\|F_k\|} > \epsilon$}
\STATE Gradient step: $\vF_{k+1/2} \gets z_{k} - \frac{1}{L}\nabla\mathcal{G}_\alpha|_{z_{k}}$ (see \eqref{eq:grad1} and \eqref{eq:grad2});
\STATE Proximal step: $\vF_{k+1/2} \gets \vF_{k+1/2}\left(1 - \frac{\lambda/L}{|F_{k+1/2}|} \right)^+$;
\STATE Monotonic step: 
\IF{$\mathcal{H}(F_{k+1/2},\alpha) < H(F_k,\alpha)$}
\STATE $F_{k+1} = F_{k+1/2}$
\ELSE
\STATE $F_{k+1} = F_k$
\ENDIF
\STATE Relaxation step: $z_{k+1}\gets \vF_{k+1} + \frac{k}{k+2} (\vF_{k+1}-\vF_{k}) + \frac{k+1}{k+2}(\vF_{k+1/2} - \vF_{k})$;
\STATE $k=k+1$;
\ENDWHILE
\STATE Output $\vF$.
\end{algorithmic}
\caption{FISTA algorithm for $\mathcal{H}_{\alpha}$: $\vF  = \text{FISTA}(\vF_{0},\alpha,\epsilon)$}
\label{alg:FISTA}
\end{algorithm}

\subsection{Minimization of $\mathcal{H}_{F} := \mathcal{H}(F,\cdot)$}

Once $\vF_{k}$ is estimated, the minimization of $\mathcal{H}_{\vF_{k}}$ reduces to a simple quadratic minimization: 
\begin{align}
	\valpha_{k+1} = \argmin_{\valpha} &\left\{\mu\sum\limits_{m=0}^{M}\sum\limits_{n=0}^{N}\left|\partial_{t}F(t_{m},\omega_{n})-i2\pi\omega_{n}F(t_{m},\omega_{n})+\alpha(t_m)\partial_{\omega}F(t_{m},\omega_{n})\right|^{2} \right.\nonumber\\
	&\qquad\qquad\left. + \gamma \sum_{m=0}^{M} |\alpha(t_m)|^2\right\}.
\end{align}
Thus, $\alpha$ can be estimated in a closed form as, for all $m=0,\ldots,M$,
\begin{equation}
	\label{eq:alpha_est}
	\alpha_{k+1}(t_m)
	=  \frac{2\sum\limits_{n=0}^N \Re\left(\partial_{\omega}\overline{F(t_{m},\omega_{n})}\big[\partial_{t}F(t_{m},\omega_{n})-i2\pi\omega_{n}F(t_{m},\omega_{n})\big]\right) }{\sum\limits_{n=0}^N|\partial_{\omega}F(t_{m},\omega_{n})|^2 + \gamma/\mu}.	
\end{equation}

\subsection{General algorithm}

We summarize in Algorithm~\ref{alg:alter} the practical procedure to minimize $\mathcal{H}$~\eqref{functional3}. The choices of the parameters are discussed below.

\begin{algorithm}[h!]
\begin{algorithmic}
\STATE Choose a stopping value $\epsilon_1$ for the FISTA algorithm;
\STATE Choose a stopping value $\epsilon_2$ for the alternating minimization;
\STATE Choose a set of decreasing values $I_{\tilde\mu}$ for the parameter $\tilde\mu\in\RR_+$. 
\STATE Choose the parameters $\tilde \lambda\in [0,1]$ and $\gamma \in \RR_+$;
\STATE The initial values are $k=0$, $\vF_{0} = \boldsymbol{0}$, $\alpha_{0} = \boldsymbol{0}$;
\FOR{$\tilde \mu\in I_{\tilde\mu}$}
\WHILE{$\frac{\|F_{k+1} - F_{k}\|}{\|F_k\|} > \epsilon_1$}
\STATE FISTA step: $\vF_{k+1} = \text{FISTA}(F_{k},\alpha_k,\epsilon_1)$ (see Alg.~\ref{alg:FISTA});
\STATE alpha estimation step (see Eq.~\eqref{eq:alpha_est});
\STATE $k=k+1$;
\ENDWHILE
\ENDFOR
\STATE Output $\vF,\alpha$;
\end{algorithmic}
\caption{Algorithm for minimization of $\mathcal{H}$}
\label{alg:alter}
\end{algorithm}

\begin{itemize}
	\item {\bf Stopping criterion.} As the functional $F\mapsto \mathcal{H(F,\alpha)}$ is convex, a good stopping criterion for FISTA is the so-called {\em duality gap}. However, the duality gap cannot be computed easily here. We then choose the classical quantity, $\frac{\|F_{k+1}-F_{k}\|}{\|F_k\|}$, to stop the FISTA inner loop as well as the alternating algorithm; that is, the algorithm stops when both the stopping criteria, $\frac{\|F_{k+1}-F_{k}\|}{\|F_k\|}\leq \epsilon_1$ and $\frac{\|\alpha_{k+1}-\alpha_{k}\|}{\|\alpha_k\|}\leq \epsilon_2$ for the chosen $\epsilon_1,\epsilon_2\geq 0$, are satisfied. $\epsilon_1$ and $\epsilon_2$ can be set to $5\times 10^{-4}$ in practice: smaller value produce a much slower algorithm for similar results.
	\item {\bf Set of values $I_{\tilde\mu}$}. A practical choice of $I_{\tilde\mu}$ is a set of $K$ values uniformly distributed on the logarithmic scale. In the noise free case, one must choose a sufficiently small $\tilde \mu$. However, a small value of $\tilde \mu$ gives a very slow algorithm. A practical strategy is to use a fixed point continuation~\cite{hale2008fixed}, also known as warm start, strategy to minimize $\mathcal{H}$. If the noise is taken into account, the final $\tilde \mu$ cannot be known in advance, but can be chosen to be the one leading to the best result among the $K$ obtained minimizers. Here, we choose $\tilde \mu$ according to the discrepancy principle~\cite{morozov1966solution}. Another approach could be the GSURE approach~\cite{deledalle2012proximal} (not derived in this work).
	\item {\bf Parameter $\tilde \lambda$.} This parameter must be chosen between $0$ and $1$. The closer $\tilde \lambda$ is to $1$, the more importance is given to the constraints on the derivatives. As these constraints should be satisfied as much as possible, we choose in practice $\tilde \lambda \simeq 0.99$. 
	\item {\bf Parameter $\gamma$.} The influence of this parameter is not dominant on the results. We set $\gamma\simeq 10^{-3}$ in order to prevent any division by $0$ during the estimation of $\alpha$ by \eqref{eq:alpha_est}.
	\item {\bf Initialization of the algorithm.} The choice of $\alpha = \boldsymbol{0}$ appears to be natural, as we cannot have access to the chirp factor. The first iteration of Algorithm \ref{alg:alter} is equivalent to an estimation without taking this chirp factor into account. 
However, this initialization can have some influence on the speed of the algorithm~\cite{Beck_Teboulle:2009}. As the solution is expected to be sparse, $\vF=\boldsymbol{0}$ seems to be a reasonable choice.
\end{itemize}

\section{Numerical Results}\label{Section:NumericalResults}

In this section we show numerical simulation results of the proposed algorithm. The code and simulated data are available via request. In this section, we take $W$ to be the standard Brownian motion defined on $[0,\infty)$ and define a {\it smoothed Brownian motion with bandwidth $\sigma>0$} as
\begin{align}
\Phi_{\sigma}:=W\star K_{\sigma},
\end{align}
where $K_{\sigma}$ is the Gaussian function with the standard deviation $\sigma>0$ and $\star$ denotes the convolution operator. 

\subsection{Single component, noise-free}
The first example is a semi-real example which is inspired from a medical challenge. Atrial fibrillation (Af) is a pathological condition associated with high mortality and morbidity \cite{Jahangir_Lee_Friedman_Trusty_Hodge:2007}. It is well known that the subject with Af would have irregularly irregular heart beats. In the language under our framework, the instantaneous frequency of the electrocardiogram signal recorded from an Af patient varies fast. To study this kind of signal with fast varying instantaneous frequency, we pick a patient with Af and determine its instantaneous heart rate by evaluating its R peak to R peak intervals. Precisely, if the R peaks are located on $t_i$, we generate a non-uniform sampling of the instantaneous heart rate and denote it as $(t_i,1/(t_{i+1}-t_i))$. Then the instantaneous heart rate, denoted as $\phi_1'(t)$, is approximated by the cubic spline interpolation. 
Next, define another a random process $A_1$ on $[0,L]$ by
\begin{align}\label{Simulation:A_1A_2}
A_1(t)=1+ \frac{\Phi_{\sigma_1}(t) + \|\Phi_{\sigma_1}\|_{L^\infty[0,L]}}{2\|\Phi_{\sigma_1}\|_{L^\infty[0,L]}}, %,
\end{align}
where $t\in[0,L]$ and $\sigma_1>0$.
Note that $A_1$ is a positive random process and in general there is no close form expression of $A_1(t)$ and $\phi_1(t)$. The dynamic of both components can be visually seen from the signal. 
We then generate an oscillatory signal with fast varying instantaneous frequency 
\begin{align}
f_1(t)=A_1(t)\cos(2\pi\phi_1(t)),
\end{align}
where $A_1(t)$ is a realization of the random process defined in (\ref{Simulation:A_1A_2}). We take $L=80$, sample $f_1$ with the sampling rate $\Delta t=1/10$, $\sigma_1=100$, $\sigma_2=200$. To compare the result with other methods, in addition to showing the result of the proposed algorithm, we also show the analysis results of STFT and synchrosqueezed STFT. In the STFT and synchrosqueezed STFT, we take the window function $g$ as a Gaussian function with the standard deviation $\sigma=1$. 
See Figure \ref{figE2} for the result. We mention that in this section, when we plot the tvPS, we compress its dynamical range by the following procedure. Denoted the discretized tvPS as ${\mathfrak{R}}\in\RR^{m\times n}$, where $m,n\in\NN$ stand for the number of discrete frequencies and the number of time samples, respectively. Set $M$ to be the $99.9\%$ quantile of the absolute values of all entries of ${\mathfrak{R}}$, then normalize the discretized tvPS by $M$, and obtain ${\widetilde{\mathfrak{R}}}\in\RR^{m\times n}$ so that ${\widetilde{\mathfrak{R}}}(i,j):=\text{max}\{M,{\mathfrak{R}}(i,j)\}$ for $i=1,\ldots,m$ and $j=1,\ldots,n$. Then plot a gray-scale visualization of ${\widetilde{\mathfrak{R}}}$ in the linear scale. 
From the figure, we see that the proposed algorithm Tycoon could extract this kind of fast varying IF well visually. However, although there are some periods where STFT and synchrosqueezed STFT show a dominant curve following the IF well, in general the IF information is blurred in their TF representations. In addition, by Tycoon, the chirp factor can be approximated up to some extent.

\begin{figure}[h!]
    \begin{center}
    \includegraphics[width=\textwidth]{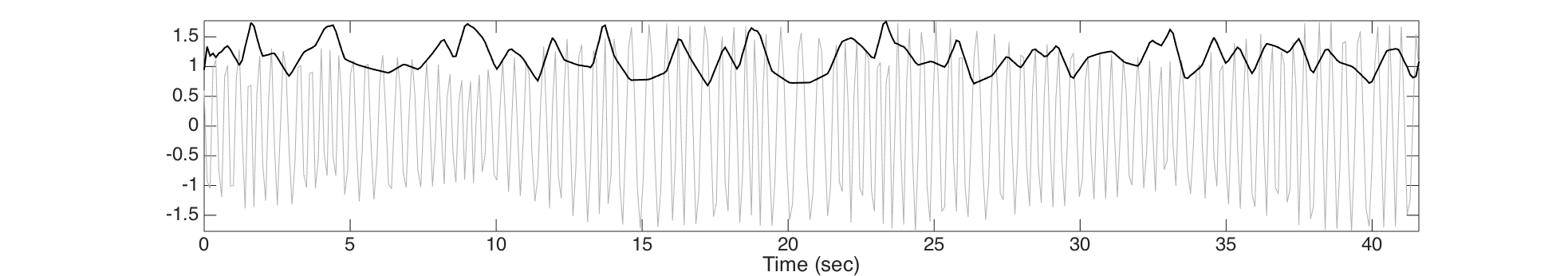}      \\
    \includegraphics[width=.45\textwidth]{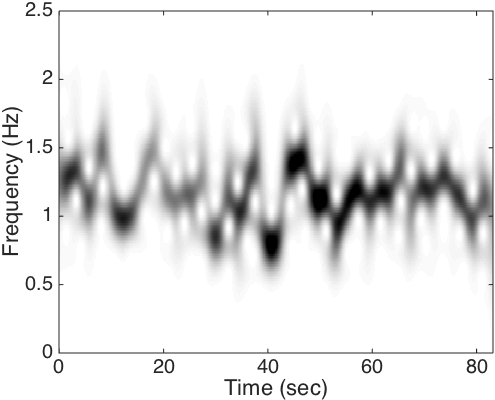}       
    \includegraphics[width=.45\textwidth]{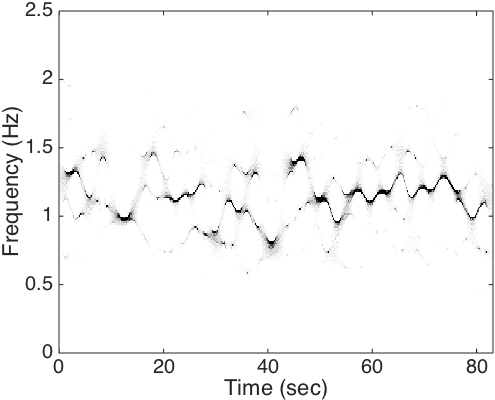}     \\
    \includegraphics[width=.45\textwidth]{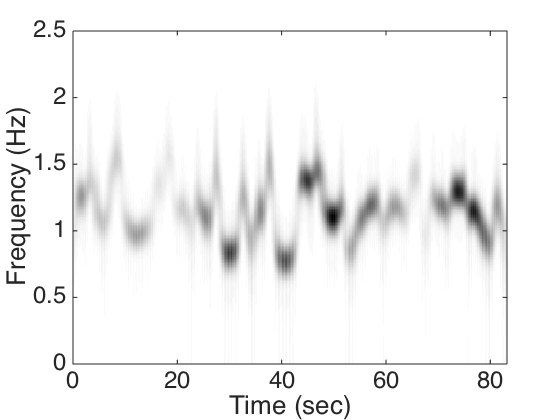}     
    \includegraphics[width=.45\textwidth]{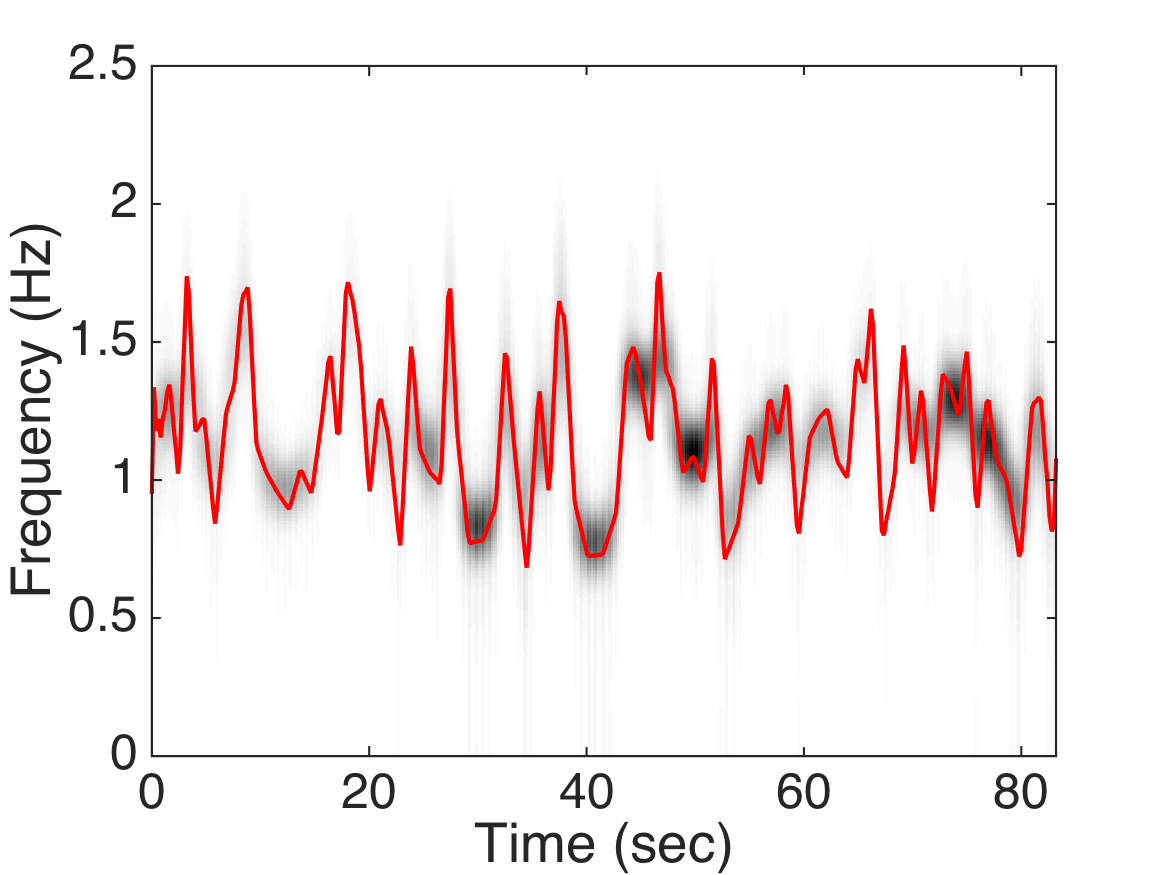}       \\
    \includegraphics[width=\textwidth]{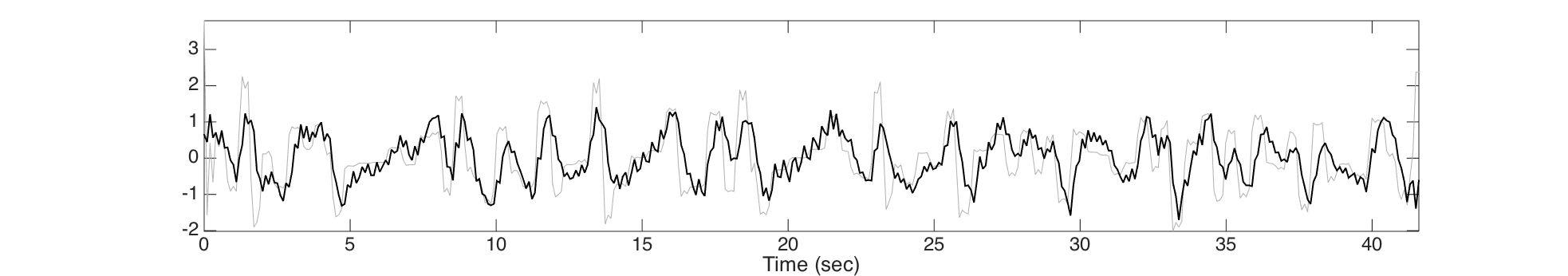}       
    \end{center}
    \caption{Top: the signal $f_1$ is shown as the gray curve with the instantaneous frequency superimposed as the black curve. It is clear that the instantaneous frequency varies fast. In the second row, the short time Fourier transform with the Gaussian window with the standard deviation $1$ is shown on the left and the synchrosqueezed short time Fourier transform is shown on the right. In the third row, the Tycoon result is shown on the left and our result with the instantaneous frequency superimposed as a red curve is shown on the right. In the bottom, the chirp factor, $\phi_2''(t)$, is shown as the gray curve and the estimated $\phi_2''(t)$; that is, the $\alpha(t)$, is properly normalized and superimposed as the black curve. In the top and bottom figures, for the sake of visibility, only the first part of the signal is demonstrated.}\label{figE2}
  \end{figure}

\subsection{Two components, noise-free}

% Second example
In the second example, we consider an oscillatory signal with two gIMTs. Define random processes $A_2(t)$ and $\phi_2(t)$ on $[0,L]$ by
\begin{align}
A_2(t)&=1+\frac{\Phi_{\sigma_1}(t) + 2\|\Phi_{\sigma_1}\|_{L^\infty[0,L]}}{3\|\Phi_{\sigma_1}\|_{L^\infty[0,L]}},\\
\phi_2(t)&=\pi t+\int_{0}^t\left[\frac{\Phi_{\sigma_2}(s) + 0.5\|\Phi_{\sigma_2}\|_{L^\infty[0,L]}}{1.5\|\Phi_{\sigma_2}\|_{L^\infty[0,L]}}-\sin(s)\right]\ud s,\nonumber
\end{align}
where $t\in[0,L]$ and $\sigma_2>0$. Note that by definition $\phi_2$ are both monotonically increasing random processes. The signal is constructed as 
\begin{equation}\label{Numerical:Examplef}
f(t)=f_1(t)+f_2(t),
\end{equation}
where $f_2(t)=A_2(t)\cos(2\pi\phi_2(t))\chi_{[20,80]}(t)$ and $\chi$ is the indicator function. Again, we take $\sigma_1=100$, $\sigma_2=200$, $L=80$ and sample $f$ with the sampling rate $\Delta t=1/10$. The result is shown in Figure \ref{figE3}.
For the comparison purpose, we also show results from other TF analysis methods. In STFT and synchrosqueezed STFT, the window function is the same as that in the first example -- the Gaussian window with the standard deviation $\sigma=1$. We also show the result with the synchrosqueezed CWT \cite{Daubechies_Lu_Wu:2011,Chen_Cheng_Wu:2014}, where the mother wavelet $\psi$ is chosen to satisfy $\hat{\psi}(\xi)=e^{\frac{1}{(\frac{\xi-1}{0.2})^2-1}}\chi_{[0.8,1.2]}$, where $\chi$ is the indicator function. Further, the popular empirical mode decomposition algorithm combined with the Hilbert spectrum (EMD-HS) \cite{Huang_Shen_Long_Wu_Shih_Zheng_Yen_Tung_Liu:1998} is also evaluated. The tvPS of $f$ determined by EMD-HS is via the following steps. First, we run the proposed sifting process and decompose the given signal $f$ into $K$ components and the remainder term (see \cite{Huang_Shen_Long_Wu_Shih_Zheng_Yen_Tung_Liu:1998} for details of the sifting process); that is, $f(t)=\sum_{k=1}^{K_{\mathfrak{H}}}x_k(t) +r(t)$, where $K_{\mathfrak{H}}\in\NN$ is chosen by the user, $x_k$ is the $k$-th decomposed oscillatory component and $r$ is the remainder term. The IF and AM of the $k$-th oscillatory component is determined by the Hilbert transform; that is, by $\tilde{x}_k(t)= x_k(t) + i\mathcal{H}(x_k(t)) = b_k(t)e^{i2\pi\psi_k(t)}$, where $\mathcal{H}$ is the Hilbert transform, the IF and the AM of the $k$-th oscillatory component are estimated by $\psi_k'(t)$ and $b_k(t)$. Here we assume that $x_k$ is well-behaved so that the Hilbert transform works. Finally, the tvPS (or called the Hilbert spectrum in the literature) of the signal $f$ determined by the EMD-HS, denoted as $\mathfrak{H}_f$, is set to be $\mathfrak{H}_f(t,\omega)= \sum_{k=1}^{K_{\mathfrak{H}}}b_k(t)\delta(\omega-\psi_k'(t)(t))$. In this work, due to the well-known mode-mixing issue of EMD and the number of components is not known a priori, we choose $K_{\mathfrak{H}}=6$ so that we could hope to capture all needed information. We mention that one possible approach to evaluate the IF and AM after the sifting process is applying the SST directly to $x_k(t)$; this combination has been shown useful in the strong field atomic physics \cite{Sheu_Wu_Hsu:2015}.
The results of STFT, synchrosqueezed STFT, synchrosqueezing CWT and EMD-HS are shown in Figure \ref{figE3other}. Visually, it is clear that the proposed convex optimization approach, Tycoon, provides the dynamical information hidden inside the signal $f$, since the IFs of both components are better extracted in Tycoon, while several visually obvious artifacts could not be ignored in other TF analyses. For example, although we could see the overall pattern of the IF of $f_2$ in the STFT, the interfering pattern could not be ignored. While the IF of $f_2$ could be well captured in synchrosqueezed CWT, the IF of $f_1$ is blurred; on the other hand, while the IF of $f_1$ could be well captured in EMD-HS, the IF of $f_2$ is blurred. Clearly, the IF patterns of both components could not be easily identified in the synchrosqueezed STFT.

%%%%%%
\subsection{Performance quantification}
To further quantify the performance of Tycoon, we consider the following metric. As indicated above, we would expect to recover the itvPS. Thus, to evaluate the performance of Tycoon and have a comparison with other TF analyses, we would compare the time varying power spectrum (tvPS) determined by different TF analyses with the itvPS of the clean simulated signal $s$. If we view both the itvPS and the tvPS as distributions on the TF-plane, we could apply the {\em Optimal Transport} (OT) distance, which is also well known as the Earth Mover distance (EMD), to evaluate how different the obtained tvPS is from the itvPS \cite{Daubechies_Wang_Wu:2015}. We would refer the reader to \cite[section 2.2]{Villani:book} for its detail theory. Here we quickly summarize how it works. 
Given two probability measures on the same set, the OT-distance evaluate the amount of ``work'' needed to ``deform'' one into the other. 
Precisely, the OT-distance between two probability distributions $\mu$ and $\nu$ on a metric space $({\tt S},d)$ involves an optimization over all possible probability measures on ${\tt S}\times{\tt S}$ that have $\mu$ and $\nu$ as marginals, denoted as $\mathcal{P}(\mu,\nu)$, by
\begin{align}
d_{\mbox{\footnotesize{OT}}}(\mu,\nu):= \inf_{\rho \in \mathcal{P}(\mu,\nu)}\int\,d(x,y)\,
\ud \rho(x,y)~,
\end{align}
which in the one-dimensional case, that is, when ${\tt S}\subset \RR$, and $d$ is the canonical Euclidean distance, $d(x,y)=|x-y|$, could be easily evaluated. Define $f_\mu(x)=\int_{-\infty}^x\, \ud \mu $ and $f_\nu(x)=\int_{-\infty}^x\, \ud \nu $, the OT distance is reduced to the $L^1$ difference of $f_\mu$ and $f_\nu$; that is,
\begin{align}
d_{\mbox{\footnotesize{OT}}}(\mu,\nu)= \int_{\mbox{\tt S}}\, |f_\mu(x)-f_\nu(x)|\, \ud x ~.
\end{align}
In the TF representation, as tvPS is always non-negative, we could view the distribution of the tvPS at each time as a probability density after normalizing its $L^1$ to $1$. This distribution indicates how accurate the TF analyses recover the oscillatory behavior of the signal at each time. Thus, based on the OT distance, we consider the following $\mathfrak{D}$ metric to evaluate the performance of each TF analyses of the function $f$ by
\begin{align}
\mathfrak{D}:=100\times \int_{-\infty}^\infty d_{\mbox{\footnotesize{OT}}}(P_f^t,\tilde{P}_f^t)\,\,\ud t,
\end{align}
where $P_f^t(\omega):=\frac{S_f(t,\omega)}{\int_0^\infty S_f(t,\eta)\ud \eta}$, $\tilde{P}_f^t(\omega):=\frac{\tilde{S}_f(t,\omega)}{\int_0^\infty \tilde{S}_f(t,\eta)\ud \eta}$, $S_f(t,\omega)$ is the itvPS and $\tilde{S}_f(t,\omega)$ is the estimated tvPS by a chosen TF analysis. Clearly, the small the $\mathfrak{D}$ metric is, the better the itvPS is approximated. 

%%%%%%

To evaluate the second example, we run STFT, synchrosqueezed STFT, synchrosqueezing CWT and Tycoon on $100$ different realizations of $f_2$ in (\ref{Numerical:Examplef}), and evaluate the $\mathfrak{D}$ metric. The result is displayed in (mean $\pm$ standard deviation). The $\mathfrak{D}$ metric between the itvPS and the tvPS determined by Tycoon (respectively, EMD-HS, STFT, synchrosqueezed STFT and synchrosqeezed CWT) is $6.06\pm 0.25$ (respectively, $7.18\pm 0.93$, $8.76\pm 0.41$, $8.13\pm 0.42$ and $7.36\pm 0.67$). Further, under the null hypothesis that there is no performance difference between the tvPS determined by Tycoon and STFT evaluated by the $\mathfrak{D}$ metric and we set the significant level at $5\%$, the t-test rejects the null hypothesis with the p-value less than $10^{-8}$. The same hypothesis testing results hold for the comparison between Tycoon and other methods. 
Note that while the performance of Tycoon seems better than EMD-HS, the $\mathfrak{D}$ metric only reflects partial information regarding the difference and more details should be taken into account to achieve a fair comparison. For example, if we set $K_{\mathfrak{H}}=2$, the $\mathfrak{D}$ metric between the itvPS and the tvPS determined by EMD-HS becomes $4.98\pm 0.81$, which might suggest that EMD-HS performs better. However, this ``better performance'' is not surprising since the sparsity property is perfectly satisfies in EMD-HS, which is inherited in the procedure, while the mode mixing issue might lead to wrong interpretation eventually. Note that it is also possible to post-process the outcome of the sifting process to enhance the result, but these ad-hoc post-processing again are not mathematically well supported. 
Since it is out of the scope of this paper, we would leave this overall comparison between different TF analyses based on different philosophy as well as a better metric to the future work. 

\begin{figure}[h!]
    \begin{center}
    \includegraphics[width=.9\textwidth]{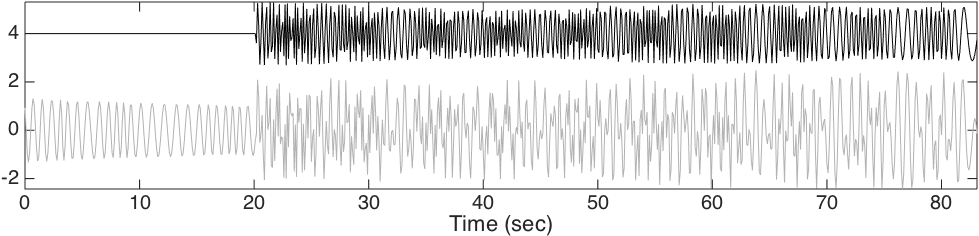}      \\
    \includegraphics[width=.45\textwidth]{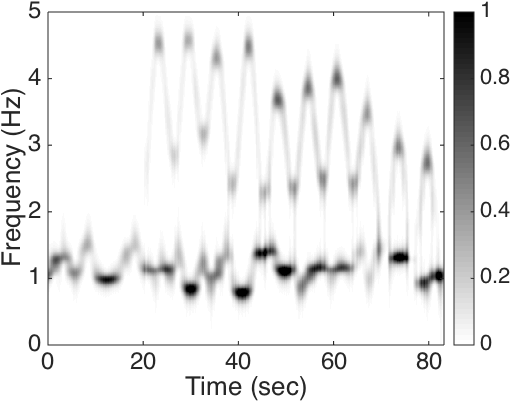}     
    \includegraphics[width=.45\textwidth]{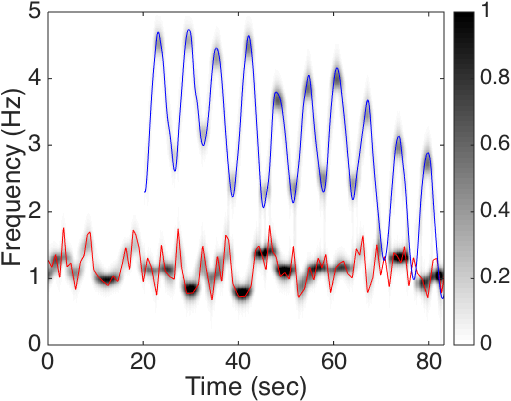}      
\end{center}
\caption{Top: the signal $f$ is shown as the gray curve with $f_2$ superimposed as the black curve which is shifted up by $4$ to increase the visualization. It is clear that the instantaneous frequency (IF) also varies fast in both components. 
In the bottom row, the intensity of the time frequency representation, $|\tilde{R}_f|^2$, determined by the proposed Tycoon algorithm is shown on the left; on the right hand side, the instantaneous frequencies associated with the two components are superimposed on $|\tilde{R}_f|^2$ as a red curve and a blue curve.}\label{figE3}
\end{figure}

\begin{figure}[h!]
    \begin{center}
    \includegraphics[width=.45\textwidth]{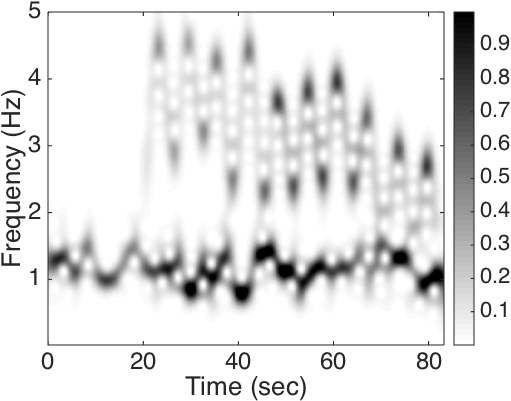}       
    \includegraphics[width=.45\textwidth]{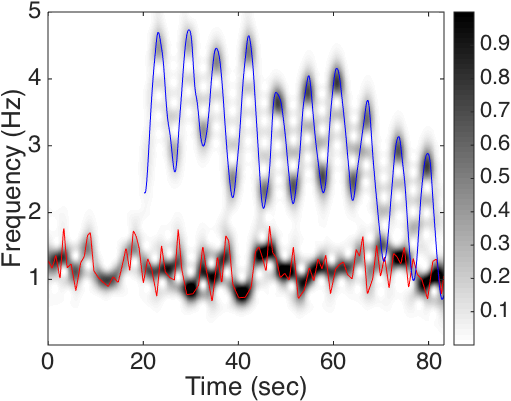}   \\
    \includegraphics[width=.45\textwidth]{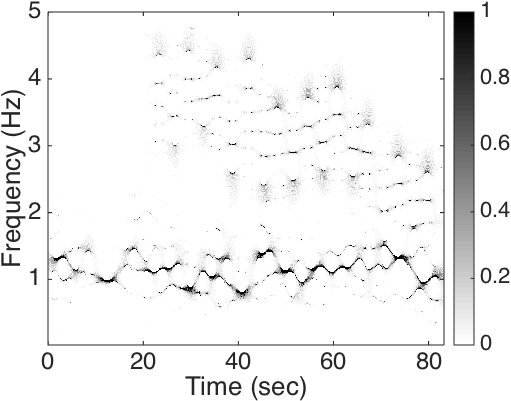}     
    \includegraphics[width=.45\textwidth]{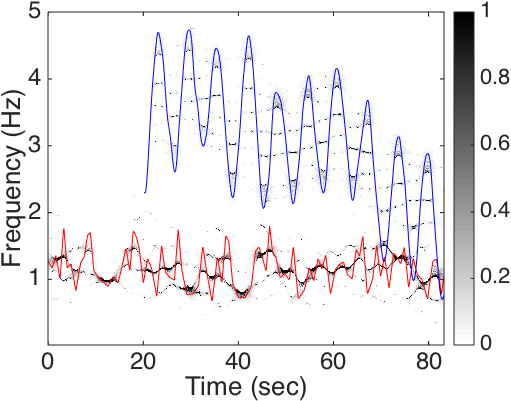}  \\   
    \includegraphics[width=.45\textwidth]{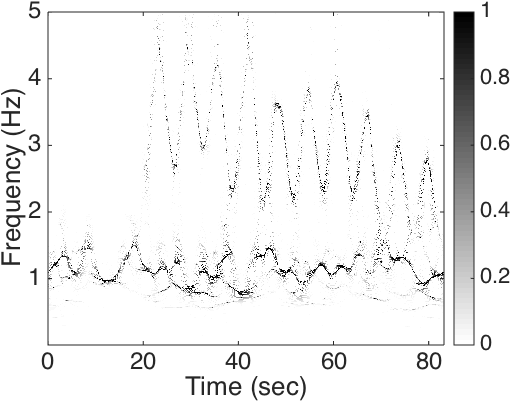}     
    \includegraphics[width=.45\textwidth]{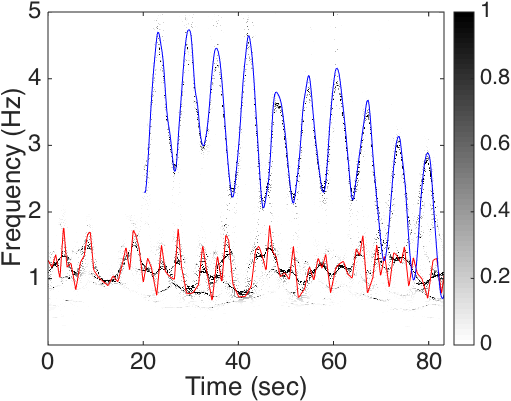}  \\
        \includegraphics[width=.45\textwidth]{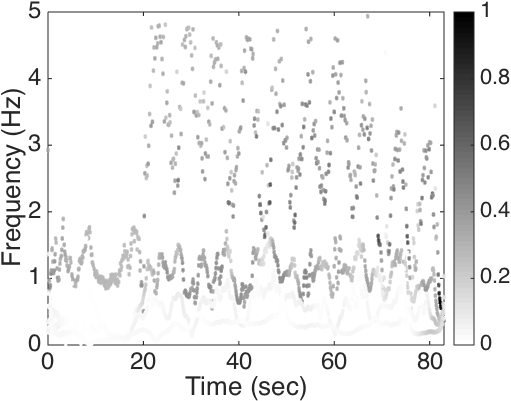}     
    \includegraphics[width=.45\textwidth]{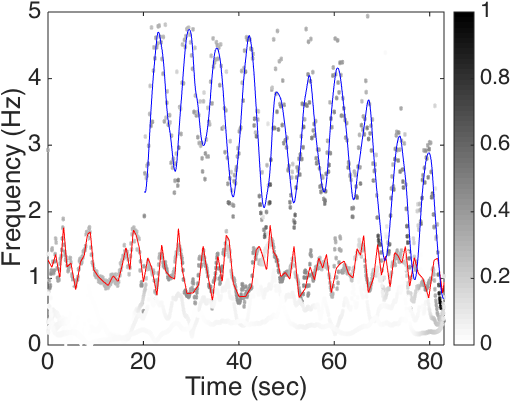}  
    \end{center}
    \caption{The time frequency (TF) representations of different TF analyses on the signal $f$. In the first row, on the left, the short time Fourier transform (STFT) with a Gaussian window with the standard deviation $\sigma=1$ is shown, and on the right the IF's of both components are superimposed for the visual comparison.
In the second row, on the left, the synchrosqueezed STFT with a Gaussian window with the standard deviation $\sigma=1$ is shown, and on the right the IF's of both components are superimposed for the visual comparison.
In the third row, on the left, we show the synchrosqueezed continuous wavelet transform with the mother wavelet $\psi$ so that $\hat{\psi}(\xi)=e^{\frac{1}{(\frac{\xi-1}{0.2})^2-1}}\chi_{[0.8,1.2]}$, where $\chi$ is the indicator function, and on the right the IF's of both components are superimposed for the visual comparison. It is clear that the slowly oscillatory component is not well captured.
In the bottom row, on the left, we show the TF representation determined by the empirical mode decomposition with the Hilbert transform, and on the right the IF's of both components are superimposed for the visual inspection. It is clear that the fast oscillatory component is not well captured.
}\label{figE3other}
  \end{figure}

\subsection{Two component, noisy}
In the third example, we add noise to the signal $f$ and see how the proposed algorithm performs. To model the noise, we define the signal to noise ratio (SNR) as
\begin{equation}
\mbox{SNR}:=20\log_{10}\frac{\mbox{std}(f)}{\mbox{std}(\Phi)},
\end{equation}
where $f$ is the clean signal, $\Phi$ is the added noise and $\mbox{std}$ means the standard deviation. In this simulation, we add the Gaussian white noise with SNR $7.25$ to the clean signal $f$, and obtain a noisy signal $Y$. The result is shown in Figure \ref{figE3noisy}. Clearly, we see that even when noise exists, the algorithm provides a reasonable result.
To further evaluate the performance, we run STFT, synchrosqueezed STFT, synchrosqueezing CWT and Tycoon on $100$ different realizations of $f_2$ in (\ref{Numerical:Examplef}) as well as $100$ different realizations of noise, and evaluate the $\mathfrak{D}$ metric. Here we use the same parameters as those in the second example to run STFT, synchrosqueezed STFT  and synchrosqueezed CWT. Since it is well known that EMD is not robust to noise, we replace the sifting process in EMD by that of the ensemble EMD (EEMD) to decompose the signal into $K_{\mathfrak{H}}=6$ oscillatory components, and generate the tvPS by the Hilbert transform as that in EMD. We call the method EEMD-HS. See \cite{Wu_Huang:2009} for the detail of the EEMD algorithm. The $\mathfrak{D}$ metric between the itvPS and the tvPS determined by Tycoon (respectively, EEMD-HS, STFT, synchrosqueezed STFT and synchrosqeezed CWT) is $11.87\pm 0.74$ (respectively, $11.65\pm 0.63$, $14.53\pm 0.55$, $14.09\pm 0.58$ and $12.79\pm 0.69$). The same hypothesis testing shows the significant difference between the performance of Tycoon and that of STFT, synchrosqueezed STFT and synchrosqeezed CWT, while there is no significant difference between the performance of Tycoon and that of EEMD-HS. Again, the same comments for the comparison between Tycoon and EMD-HS carry here when we compare Tycoon and EEMD-HS, and we leave the details to the future work.

\begin{figure}[h!]
    \begin{center}
    \includegraphics[width=.9\textwidth]{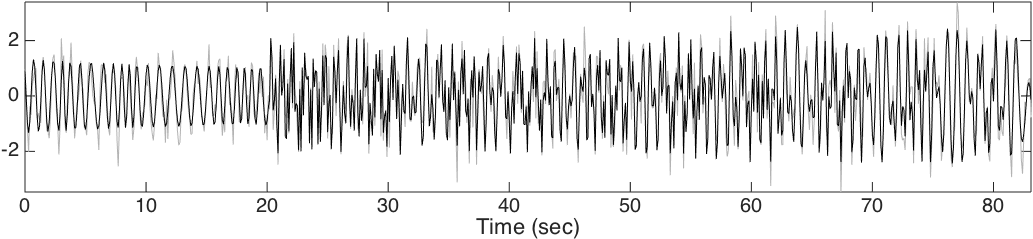}      \\
    \includegraphics[width=.45\textwidth]{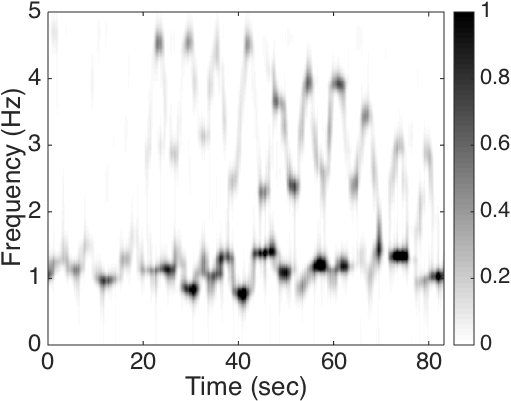}     
    \includegraphics[width=.45\textwidth]{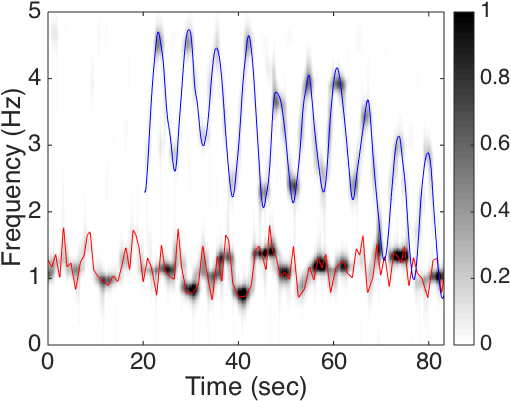}      
    \end{center}
    \caption{Top: the noisy signal $Y$ is shown as the gray curve with the clean signal $f$ superimposed as the black curve.In the second row, the intensity of the time frequency representation, $|\tilde{R}_Y|^2$, determined by our proposed Tycoon algorithm is shown on the left; on the right hand side, the instantaneous frequencies associated with the two components are superimposed on $|\tilde{R}_Y|^2$ as a red curve and a blue curve.}\label{figE3noisy}
  \end{figure}

\section{Discussion and future work}
In this paper we propose a generalized intrinsic mode functions and adaptive harmonic model to model oscillatory functions with fast varying instantaneous frequency. A convex optimization approach to find the time-frequency representation, referred to as Tycoon algorithm, is proposed. While the numerical results are encouraging, there are several things we should discuss. 
\begin{enumerate}
\item While with the help of FISTA the optimization process can be carried out, it is still not numerically efficient enough for practical usage. For example, it takes about 3 minutes to finish analyzing a time series with $512$ points in the laptop, but in many problems the data length is of order $10^5$ or longer. Finding a more efficient strategy to carry out the optimization is an important future work. One possible solution is by the sliding window idea. For a given long time series $f$ of length $n$ and a length $m<n$, we could run the optimization consecutively on the subinterval $I_j:=[j-m,j+m]$ to determine the tvPS at time $j$. Thus, the overall computational complexity could be $O(F(m)n)$, where $F(m)$ is the complexity of running the optimization on the subinterval $I_j$.
\item When there are more than one oscillatory component, we could consider (\ref{functional4}) to improve the result. However, in practice it does not significantly improve the result. Since it is of its own interest, we decide to leave it to the future work.
\item While the Tycoon algorithm is not very much sensitive to the choice of parameters $\mu$, $\lambda$ and $\gamma$, how to choose an optimal set of parameters is left unanswered in the current paper.
\item The noise behavior and influence on the Tycoon algorithm is not clear at this moment, although we could see that it is robust to the existence of noise in the numerical section. Theoretically studying the noise influence on the algorithm is important for us to better understand what we see in practice. 
\end{enumerate}
Before closing the paper, we would like to indicate an interesting finding about SST which is related to our current study. When an oscillatory signal is composed of intrinsic mode type function with slowly varying IF, it has been studied that the time-frequency representation of a function depends ``weakly'' on a chosen window, when the window has a small support in the Fourier domain \cite{Daubechies_Lu_Wu:2011,Chen_Cheng_Wu:2014}. Precisely, the result depends only on the first three absolute moments of the chosen window and its derivative, but not depends on the profile of the window itself. However, the situation is different when we consider an oscillatory signal composed of gIMT function with fast varying IF. As we have shown in Figure \ref{figE3}, when the window is chosen to have a small support in the Fourier domain, the STFT and synchrosqueezed STFT results are not ideal. Nevertheless, nothing prevents us from trying a window with a small support in the time domain; that is, a wide support in the Fourier domain. As is shown in Figure \ref{figE4}, by taking the window to be a Gaussian function with the standard deviation $0.4$, STFT and synchrosqueezed STFT provide reasonable results for the signal $f$ considered in (\ref{Numerical:Examplef}). Note that while we could start to see the dynamics in both STFT and synchrosqueezed STFT, the overall performance is not as good as that provided by Tycoon. Since it is not the focus of the current paper, we just indicate the possibility of achieving a better time-frequency representation by choosing a suitable window in SST, but not make effort to determine the optimal window. This kind of approach has been applied to the strong field atomic physics \cite{Li_Sheu_Laughlin_Chu:2015,Sheu_Wu_Hsu:2015}, where the window is manually but carefully chosen to extract the physically meaningful dynamics. A theoretical study regarding this topic will be reported in the near future. 

\begin{figure}[h!]
    \begin{center}
    \includegraphics[width=.45\textwidth]{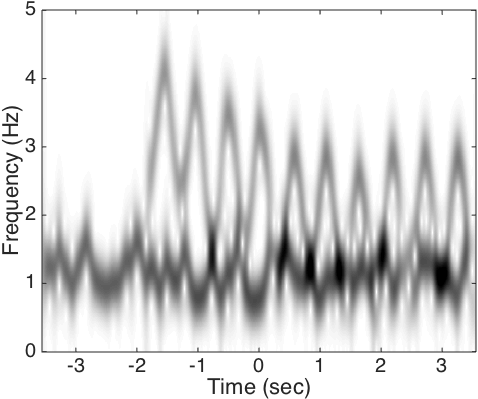}     
    \includegraphics[width=.45\textwidth]{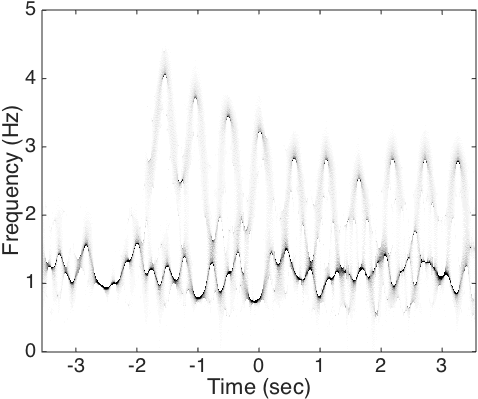}      
    \end{center}
    \caption{Left: the intensity of the short time Fourier transform (STFT) with a Gaussian window with the standard deviation $\sigma=0.4$ is shown on the left and the intensity of the synchrosqueezed STFT is shown on the right.}\label{figE4}
  \end{figure}

\section{Acknowledgement}
Hau-tieng Wu would like to thank Professor Ingrid Daubechies and Professor Andrey Feuerverger for their valuable discussion and Dr. Su Li for discussing the application direction in music and sound analyses. Hau-tieng Wu's work is partially supported by Sloan Research Fellow FR-2015-65363. Part of this work was done during Hau-tieng Wu's visit to National Center for Theoretical Sciences, Taiwan, and he would like to thank NCTS for its hospitality.
Matthieu Kowalski benefited from the support of the ``FMJH Program Gaspard Monge in optimization and operation research'', and from the support to this program from EDF.
We would also like to thank the anonymous reviewers for their constructive and helpful comments.

\bibliographystyle{plain}
\bibliography{TFanalysis,Optimization}

\clearpage

\appendix

\renewcommand{\theclaim}{A.\arabic{claim}}

\section{Proof of Theorem \ref{theorem:identifiability:single}}
Suppose 
\begin{equation}\label{observation:identifiability:lemma:1}
g(t)=a(t)\cos\phi(t)=(a(t)+\alpha(t))\cos(\phi(t)+\beta(t))\in \mathcal{Q}^{c_1,c_2,c_3}_\epsilon.
\end{equation}
Clearly we know $\alpha\in C^1(\RR),~\beta\in C^3(\RR)$. By the definition of $\mathcal{Q}^{c_1,c_2,c_3}_\epsilon$, we have
\begin{align}
&\inf_{t\in\RR}a(t)>c_1,~\sup_{t\in\RR}a(t)<c_2, \label{proof:2.1:Aeps:acond1}\\
&\inf_{t\in\RR}\phi'(t)>c_1,~\sup_{t\in\RR}\phi'(t)<c_2,~|\phi''(t)|\leq c_3\label{proof:2.1:Aeps:phicond1}\\
&|a'(t)|\leq \epsilon \phi'(t),~|\phi'''(t)|\leq \epsilon \phi'(t) \label{proof:2.1:Aeps:boundcond1}
\end{align}
and 
\begin{align}
&\inf_{t\in\RR}[a(t)+\alpha(t)]>c_1,~\sup_{t\in\RR}[a(t)+\alpha(t)]<c_2,\label{proof:2.1:Aeps:acond2}\\
&\inf_{t\in\RR}[\phi'(t)+\beta'(t)]>c_1,~\sup_{t\in\RR}[\phi'(t)+\beta'(t)]<c_2,~|\phi''(t)+\beta''(t)|\leq c_3\label{proof:2.1:Aeps:phicond2}\\
&|a'(t)+\alpha'(t)|\leq \epsilon (\phi'(t)+\beta'(t)),~|\phi'''(t)+\beta'''(t)|\leq \epsilon (\phi'(t)+\beta'(t)).  \label{proof:2.1:Aeps:boundcond2}
\end{align}

The proof is divided into two parts. The first part is determining the restrictions on the possible $\beta$ and $\alpha$ based on the positivity condition of $\phi'(t)$ and $a(t)$, which is independent of the conditions (\ref{proof:2.1:Aeps:boundcond1}) and (\ref{proof:2.1:Aeps:boundcond2}). The second part is to control the amplitude of $\beta$ and $\alpha$, which depends on the conditions (\ref{proof:2.1:Aeps:boundcond1}) and (\ref{proof:2.1:Aeps:boundcond2}).

First, based on the conditions (\ref{proof:2.1:Aeps:acond1}), (\ref{proof:2.1:Aeps:phicond1}), (\ref{proof:2.1:Aeps:acond2}) and (\ref{proof:2.1:Aeps:phicond2}), we show how $\beta$ and $\alpha$ are restricted. 
By the monotonicity of $\phi(t)$ based on the condition (\ref{proof:2.1:Aeps:phicond1}), define $t_m\in\RR$, $m\in\ZZ$, so that $\phi(t_m)=(m+1/2)\pi$ and $s_m\in\RR$, $m\in\ZZ$, so that $\phi(s_m)=m\pi$. In other words, we have 
\begin{align*}
g(t_m)=0~~\mbox{ and }~~g(s_m)=(-1)^ma(s_m).
\end{align*} 
Thus, for any $n\in\ZZ$, when $t=t_n$, we have
\begin{align}
&(a(t_n)+\alpha(t_n))\cos(\phi(t_n)+\beta(t_n))\nonumber\\
=&\,(a(t_n)+\alpha(t_n))\cos[n\pi+\pi/2+\beta(t_n)]\label{before_proof:identifiability:eq1}\\
=&\,a(t_n)\cos(n\pi+\pi/2)=0,\nonumber
\end{align}
where the second equality comes from (\ref{observation:identifiability:lemma:1}). This leads to $\beta(t_n)=k_n\pi$, $k_n\in\ZZ$, since $a(t_n)+\alpha(t_n)>0$ by (\ref{proof:2.1:Aeps:phicond2}). 
\begin{lemma}\label{proof:Claim1}
$k_n$ are the same for all $n\in\ZZ$ and $k_n$ are even. As changing the phase function globally by $2l\pi$, where $l\in\ZZ$, will not change the value of $g(t_n)$ for all $n\in \ZZ$, we could assume that $\beta(t_m)=0$ for all $m\in\ZZ$. 
\end{lemma}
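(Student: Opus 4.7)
The plan is to combine three ingredients: the positivity of amplitude and instantaneous frequency in both representations (which forces $\phi+\beta$ to be strictly increasing), the coincidence of the zero sets of $g$ computed via either representation, and the values of $g$ at the extremal points $s_m$.

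First I would record that $(\phi+\beta)'(t)=\phi'(t)+\beta'(t)\geq c_1>0$ by \eqref{proof:2.1:Aeps:phicond2}, so $\phi+\beta$ is strictly monotonically increasing on $\RR$. Since $a+\alpha>0$, the zeros of $g=(a+\alpha)\cos(\phi+\beta)$ are exactly the points where $\phi(t)+\beta(t)\in(\ZZ+\tfrac12)\pi$; viewed through the original representation, the zeros of $g=a\cos\phi$ are exactly the $t_n$. Thus the two descriptions of the zero set must coincide, and on each open interval $(t_n,t_{n+1})$ the function $g$ stays nonzero.

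Next I would show that all the integers $k_n$ coincide. Strict monotonicity of $\phi+\beta$ evaluated at $t_n<t_{n+1}$ yields $(n+1+k_{n+1}+\tfrac12)\pi>(n+k_n+\tfrac12)\pi$, which forces $k_{n+1}\geq k_n$ as integers. Moreover, $\phi+\beta$ sends $(t_n,t_{n+1})$ monotonically onto the open interval with endpoints $(n+k_n+\tfrac12)\pi$ and $(n+1+k_{n+1}+\tfrac12)\pi$, of length $(1+k_{n+1}-k_n)\pi$. Because $g$ does not vanish on $(t_n,t_{n+1})$, this image interval must contain no point of $(\ZZ+\tfrac12)\pi$; but consecutive such points lie exactly $\pi$ apart, so if $k_{n+1}>k_n$ the open interval has length at least $2\pi$ and necessarily contains one. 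Thus $k_{n+1}=k_n$, so $k_n\equiv k$ is constant in $n$.

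For parity I would plug $t=s_m$ into the identity. Writing $\cos(\phi(s_m)+\beta(s_m))=\cos(m\pi+\beta(s_m))=(-1)^m\cos\beta(s_m)$, the identity yields $\cos\beta(s_m)=a(s_m)/(a(s_m)+\alpha(s_m))>0$. Since $s_m\in(t_{m-1},t_m)$ and $\phi+\beta$ is strictly monotonic, $\beta(s_m)$ must lie in $((k-\tfrac12)\pi,(k+\tfrac12)\pi)$. On this length-$\pi$ window, $\cos$ is positive only when the window is centered at an even multiple of $\pi$, forcing $k$ to be even. Finally, writing $k=2l$, the substitution $\beta\mapsto\beta-2l\pi$ shifts the alternative phase $\varphi$ by a multiple of $2\pi$ and so leaves $g$ invariant, achieving $\beta(t_n)=0$ for every $n$.

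The main obstacle is the zero-counting step that pins $k_{n+1}=k_n$; one must carefully exploit both strict monotonicity of $\phi+\beta$ and the exact coincidence of the zero sets to rule out any jump. The parity and the final global normalization by $2l\pi$ are then essentially bookkeeping.
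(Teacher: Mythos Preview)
Your proof is correct. For the constancy of $k_n$, your argument and the paper's are essentially the same zero-counting idea: the paper argues by contradiction that if $k_{n+1}\neq k_n$ then $(a+\alpha)\cos(\phi+\beta)$ would have to change sign on $(t_n,t_{n+1})$ while $a\cos\phi$ does not; you phrase this more carefully via the strict monotonicity of $\phi+\beta$ and the length of the image interval, which makes the step airtight.

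For the parity of $k$, you take a genuinely different route. The paper argues that, since $\beta(t_n)=\beta(t_{n+1})=k\pi$, one can find an interior $t'\in(t_n,t_{n+1})$ with $\beta(t')=k\pi$, and then the identity $a(t')\cos\phi(t')=-(a(t')+\alpha(t'))\cos\phi(t')$ forces a sign contradiction when $k$ is odd. You instead evaluate at the extremal point $s_m\in(t_{m-1},t_m)$, use $\cos\beta(s_m)=a(s_m)/(a(s_m)+\alpha(s_m))>0$, and combine this with the monotonicity bound $\beta(s_m)\in\big((k-\tfrac12)\pi,(k+\tfrac12)\pi\big)$ to force $k$ even. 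Your argument is shorter and sidesteps the existence of the interior point $t'$, which in the paper's presentation is asserted but not justified (the endpoint values $\beta(t_n)=\beta(t_{n+1})=k\pi$ alone do not guarantee an interior point with the same value). So your approach actually closes a small gap.
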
 
\begin{proof}
Suppose there exists $t_n$ so that $\beta(t_n)=k\pi$ and $\beta(t_{n+1})=(k+l)\pi$, where $k,l\in \ZZ$ and $l>0$. In other words, we have $\phi(t_{n+1})=\phi(t_n)+(l+1)\pi$. By the smoothness of $\beta$, we know there exists at least one $t'\in (t_n,t_{n_1})$ so that $\phi(t')+\beta(t')=(n+3/2)\pi$, but this is absurd since it means that $(a(t)+\alpha(t))\cos(\phi(t)+\beta(t))$ will change sign in $(t_n,t_{n+1})$ while $a(t)\cos(\phi(t))$ will not. 

Suppose $k_n$ is a fixed odd integer $k$, then since $\beta\in C^3(\RR)$ and $\beta(t_n)=\beta(t_{n+1})=k\pi$, there exists $t'\in(t_n,t_{n+1})$ so that $\beta(t')=k\pi$ and hence 
\begin{align*}
a(t')\cos(\phi(t'))&\,= (a(t')+\alpha(t'))\cos(\phi(t')+\beta(t'))=-(a(t')+\alpha(t'))\cos(\phi(t')),
\end{align*}
which is again absurd since $\cos(\phi(t'))\neq 0$ and the amplitudes are positive by (\ref{proof:2.1:Aeps:acond1}) and (\ref{proof:2.1:Aeps:acond2}). We thus obtain the second claim.
\end{proof}

\begin{lemma}\label{proof:Claim2}
$\beta'(t)$ is $0$ or changes sign inside $[t_n,\,t_{n+1}]$ for all $n\in\ZZ$. Furthermore, $|\beta(t')-\beta(t'')|<\pi$ for any $t',t''\in[t_m,t_{m+1}]$ for all $m\in\ZZ$.
\end{lemma}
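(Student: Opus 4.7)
The plan rests on two facts already established. By Lemma~\ref{proof:Claim1} we may assume $\beta(t_n)=0$ for every $n\in\ZZ$, and by the lower bound $\inf_t[\phi'(t)+\beta'(t)]>c_1>0$ from (\ref{proof:2.1:Aeps:phicond2}), the shifted phase $\phi+\beta$ is strictly increasing. These two facts drive both assertions.

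For the first assertion, I would observe that $\beta(t_n)=\beta(t_{n+1})=0$ forces
\begin{equation*}
\int_{t_n}^{t_{n+1}}\beta'(s)\,\ud s=0.
\end{equation*}
Since $\beta'$ is continuous, the dichotomy is clean: either $\beta'\equiv 0$ on $[t_n,t_{n+1}]$, or $\beta'$ must assume values of both signs on the interval, in which case the intermediate value theorem yields an interior zero of $\beta'$ at which it changes sign.

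For the oscillation bound, the key observation is that both $\phi$ and $\phi+\beta$ are strictly increasing continuous bijections from $[t_m,t_{m+1}]$ onto exactly the same interval $[(m+1/2)\pi,(m+3/2)\pi]$: the first by the definition of the $t_m$'s, and the second because $\beta$ vanishes at both endpoints while $\phi'+\beta'>0$. Taking (without loss of generality) $t'\leq t''$ in $[t_m,t_{m+1}]$, I would decompose
\begin{equation*}
\beta(t'')-\beta(t')=A-B,\qquad A:=(\phi+\beta)(t'')-(\phi+\beta)(t'),\qquad B:=\phi(t'')-\phi(t'),
\end{equation*}
where monotonicity guarantees $A,B\in[0,\pi]$.

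The main obstacle is upgrading the naive bound $|A-B|\leq\pi$ to a \emph{strict} inequality. The decisive point is that strict monotonicity of $\phi$ (resp.\ of $\phi+\beta$) forces $B=\pi$ (resp.\ $A=\pi$) to occur only when $\{t',t''\}=\{t_m,t_{m+1}\}$; in that single configuration one has $A=B=\pi$, hence $A-B=0$, ruling out $|A-B|=\pi$ there. Outside this configuration, both $A$ and $B$ lie in $[0,\pi)$, and a short case check on whether $A\geq B$ or $A<B$ immediately yields $|A-B|<\pi$. Combining the two regimes gives the claimed strict bound for all $t',t''\in[t_m,t_{m+1}]$.
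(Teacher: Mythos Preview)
Your proof is correct and follows the same approach as the paper's: both use the fundamental theorem of calculus with $\beta(t_n)=\beta(t_{n+1})=0$ for the first assertion, and both exploit that $\phi$ and $\phi+\beta$ are strictly increasing bijections from $[t_m,t_{m+1}]$ onto the same length-$\pi$ interval for the second. Your treatment of the strict inequality is in fact cleaner than the paper's---the paper argues by contradiction in a way that handles the case $\beta(t'')-\beta(t')\geq\pi$ transparently but is less explicit about the opposite sign, whereas your decomposition $\beta(t'')-\beta(t')=A-B$ with $A,B\in[0,\pi]$ and the endpoint analysis covers both directions symmetrically (and the ``short case check'' you mention is not even needed: $A,B\in[0,\pi)$ immediately gives $A-B\in(-\pi,\pi)$).
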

\begin{proof}
By the fundamental theorem of calculus and the fact that $\beta(t_n)=\beta(t_{n+1})=0$, we know that
\begin{equation*}
0=\beta(t_{n+1})-\beta(t_n)=\int^{t_{n+1}}_{t_n}\beta'(u)\ud u.
\end{equation*}
which implies the first argument. Also, due to the monotonicity of $\phi+\beta$ (\ref{proof:2.1:Aeps:phicond2}), that is, $(n+1/2)\pi=\phi(t_{n})+\beta(t_{n})<\phi(t')+\beta(t')<\phi(t_{n+1})+\beta(t_{n+1})=(n+3/2)\pi$ for all $t'\in (t_{n},t_{n+1})$, we have the second claim
\begin{equation*}
|\beta(t')-\beta(t'')|<\pi.
\end{equation*} 
Indeed, if $|\beta(t')-\beta(t'')|\geq \pi$, for some $t',t''\in[t_n,t_{n+1}]$ and $t'<t''$, we get an contradiction since $\phi(t'')+\beta(t'')\notin[(n+1/2)\pi,(n+3/2)\pi]$ while $\phi(t')+\beta(t')\in[(n+1/2)\pi,(n+3/2)\pi]$.
\end{proof}

\begin{lemma}\label{proof:Claim3}
$\frac{a(s_n)}{a(s_n)+\alpha(s_n)}=\cos(\beta(s_n))$ for all $n\in\ZZ$. In particular, $\alpha(s_m)=0$ if and only if $\beta(s_m)=0$, $m\in\ZZ$.
\end{lemma}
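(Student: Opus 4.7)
The plan is to evaluate the identity $g = a\cos\phi = (a+\alpha)\cos(\phi+\beta)$ pointwise at the distinguished points $s_n = \phi^{-1}(n\pi)$, exactly mirroring what was already done for the points $t_n$ in \eqref{before_proof:identifiability:eq1}. Since $\phi(s_n) = n\pi$, we have $\cos(\phi(s_n)) = (-1)^n$ and, by the addition formula,
\begin{align*}
\cos(\phi(s_n)+\beta(s_n)) = \cos(n\pi)\cos(\beta(s_n)) - \sin(n\pi)\sin(\beta(s_n)) = (-1)^n\cos(\beta(s_n)).
\end{align*}
Plugging these in and cancelling $(-1)^n$ yields the claimed identity $a(s_n) = (a(s_n)+\alpha(s_n))\cos(\beta(s_n))$, from which the quotient form follows immediately because $a(s_n) + \alpha(s_n) > c_1 > 0$ by \eqref{proof:2.1:Aeps:acond2}.

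For the ``in particular'' statement, the direction $\beta(s_m) = 0 \implies \alpha(s_m) = 0$ is trivial from the identity. The converse requires a little more care: $\alpha(s_m) = 0$ forces $\cos(\beta(s_m)) = 1$, which gives $\beta(s_m) \in 2\pi\ZZ$, not yet $\beta(s_m) = 0$. To eliminate the nonzero multiples of $2\pi$, I will invoke Lemma~\ref{proof:Claim2}: by the ordering $s_{m} < t_{m} < s_{m+1} < t_{m+1}$ (which follows from the strict monotonicity of $\phi$ together with the definitions $\phi(s_m) = m\pi$ and $\phi(t_m) = (m+\tfrac12)\pi$), the point $s_{m+1}$ lies in the interval $[t_m, t_{m+1}]$ on which $|\beta(s_{m+1}) - \beta(t_m)| < \pi$. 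Since $\beta(t_m) = 0$ by Lemma~\ref{proof:Claim1}, this gives $|\beta(s_{m+1})| < \pi$, so the only admissible element of $2\pi\ZZ$ is $0$. An analogous argument on $[t_{m-1},t_m]$ handles $s_m$.

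There is no real obstacle here: the only subtle point is the equivalence $\alpha(s_m) = 0 \iff \beta(s_m) = 0$, where one must use the confinement of $\beta$ to $(-\pi,\pi)$ on a suitable interval containing $s_m$ to rule out the spurious solutions $\beta(s_m) = 2k\pi$ with $k \neq 0$. Everything else is a one-line substitution using the normalization $\beta(t_n) = 0$ established in Lemma~\ref{proof:Claim1} and the positivity of the amplitudes in \eqref{proof:2.1:Aeps:acond1} and \eqref{proof:2.1:Aeps:acond2}. Note that, as advertised after the theorem statement, this argument uses only the boundedness conditions \eqref{proof:2.1:Aeps:acond1}--\eqref{proof:2.1:Aeps:phicond2} and not the growth conditions \eqref{proof:2.1:Aeps:boundcond1}--\eqref{proof:2.1:Aeps:boundcond2}.
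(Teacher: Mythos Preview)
Your proof is correct. The derivation of the identity $\frac{a(s_n)}{a(s_n)+\alpha(s_n)}=\cos(\beta(s_n))$ is essentially identical to the paper's: both evaluate the defining relation at $s_n$, use $\phi(s_n)=n\pi$, expand by the addition formula, and divide by the positive amplitude $a(s_n)+\alpha(s_n)$.

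Where you diverge slightly from the paper is in the treatment of the equivalence $\alpha(s_m)=0\iff\beta(s_m)=0$. The paper argues directly via the intermediate value theorem: if $\beta(s_m)=2k_m\pi$ with $k_m>0$, then since $\beta(t_{m-1})=0$ there must exist $t'\in(t_{m-1},s_m)$ with $\beta(t')=\pi$, and at such a $t'$ the identity $a(t')\cos\phi(t')=-(a(t')+\alpha(t'))\cos\phi(t')$ is impossible because $\cos\phi(t')\neq 0$ and both amplitudes are positive. You instead invoke Lemma~\ref{proof:Claim2}, observing that $s_m\in[t_{m-1},t_m]$ and $\beta(t_{m-1})=0$ force $|\beta(s_m)|<\pi$, which immediately rules out the nonzero multiples of $2\pi$. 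This is a legitimate and somewhat more economical route: Lemma~\ref{proof:Claim2} already packages the monotonicity-of-$\phi+\beta$ argument that the paper in effect reproves here, so reusing it avoids duplication. The paper's version, on the other hand, is self-contained at this step and also establishes in passing the sharper fact that $\beta(s_m)\in(-\pi/2,\pi/2)$ whenever $\alpha(s_m)>0$, which your argument does not explicitly yield (though it is not part of the lemma's statement).
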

\begin{proof}
When $t=s_m$, we have
\begin{align}
(-1)^ma(s_m)=&\,a(s_m)\cos(m\pi)\label{before_proof:identifiability:eq2}\\
=&\,(a(s_m)+\alpha(s_m))\cos[m\pi+\beta(s_m)]\nonumber\\
=&\,(-1)^m(a(s_m)+\alpha(s_m))\cos(\beta(s_m))\nonumber,
\end{align}
where the second equality comes from (\ref{observation:identifiability:lemma:1}), which leads to $\alpha(s_m)\geq0$ since $|\cos(\beta(s_m))|\leq 1$. 

Notice that (\ref{before_proof:identifiability:eq2}) implies that $\beta(s_m)=2k_m\pi$, where $k_m\in\ZZ$, if and only if $\alpha(s_m)=0$. Without loss of generality, assume $k_m>0$. Since $\beta\in C^3(\RR)$, there exists $t'\in(t_{m-1},s_m)$ so that $\beta(t')=\pi$ and hence 
\begin{align*}
a(t')\cos(\phi(t'))&\,= (a(t')+\alpha(t'))\cos(\phi(t')+\beta(t'))=-(a(t')+\alpha(t'))\cos(\phi(t')),
\end{align*}
which is absurd since $\cos(\phi(t'))\neq 0$ and the positive amplitudes by (\ref{proof:2.1:Aeps:acond1}) and (\ref{proof:2.1:Aeps:acond2}). Thus we conclude that $\beta(s_m)=0$.

To show the last part, note that when $\alpha(s_m)>0$, $0<\cos(\beta(s_m))=\frac{a(s_m)}{a(s_m)+\alpha(s_m)}<1$ by (\ref{before_proof:identifiability:eq2}). Thus, we know $\beta(s_m)\in(-\pi/2,\pi/2)+2n_m\pi$, where $n_m\in\ZZ$. By the same argument as in the above, if $n_m>0$, there exists $t'\in(t_{m-1},s_m)$ so that $\beta(t')=\pi$ and hence 
\begin{align*}
a(t')\cos(\phi(t'))&\,=\, (a(t')+\alpha(t'))\cos(\phi(t')+\beta(t'))=\,-(a(t')+\alpha(t'))\cos(\phi(t')),
\end{align*}
which is absurd since $\cos(\phi(t'))\neq 0$ and the positive amplitudes by (\ref{proof:2.1:Aeps:acond1}) and (\ref{proof:2.1:Aeps:acond2}). 
\end{proof}

\begin{lemma}\label{proof:Claim4}
$\frac{a(t_n)}{a(t_n)+\alpha(t_n)}=\frac{\phi'(t_n)+\beta'(t_n)}{\phi'(t_n)}$ for all $n\in\ZZ$. In particular, $\alpha(t_n)=0$ if and only if $\beta'(t_n)=0$, $n\in\ZZ$. 
\end{lemma}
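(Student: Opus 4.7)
The plan is to differentiate the identity $a(t)\cos\phi(t)=(a(t)+\alpha(t))\cos(\phi(t)+\beta(t))$ once in $t$, evaluate the result at $t=t_n$, and exploit the fact that $\phi(t_n)=(n+1/2)\pi$ together with the previously established $\beta(t_n)=0$. At these points the cosine factors vanish simultaneously on both sides, so the contribution of the amplitude-derivative terms drops out, leaving a clean algebraic relation between $a,\,\phi'$ on the one hand and $a+\alpha,\,\phi'+\beta'$ on the other.

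More concretely, I would first write
\begin{align*}
a'(t)\cos\phi(t)-a(t)\phi'(t)\sin\phi(t)=&\,(a'(t)+\alpha'(t))\cos(\phi(t)+\beta(t))\\
&\,-(a(t)+\alpha(t))(\phi'(t)+\beta'(t))\sin(\phi(t)+\beta(t)),
\end{align*}
which is just the derivative of (\ref{observation:identifiability:lemma:1}), valid since $a,\alpha\in C^1$ and $\phi,\beta\in C^3$. Setting $t=t_n$, the equalities $\cos\phi(t_n)=0$, $\sin\phi(t_n)=(-1)^n$ hold by construction, and by Lemma~\ref{proof:Claim1} we may take $\beta(t_n)=0$, so $\cos(\phi(t_n)+\beta(t_n))=0$ and $\sin(\phi(t_n)+\beta(t_n))=(-1)^n$ as well. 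The two terms carrying $a'$ and $a'+\alpha'$ therefore vanish, leaving
\begin{equation*}
a(t_n)\,\phi'(t_n)=(a(t_n)+\alpha(t_n))(\phi'(t_n)+\beta'(t_n)).
\end{equation*}

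Both $a(t_n)+\alpha(t_n)$ and $\phi'(t_n)$ are strictly positive by (\ref{proof:2.1:Aeps:acond2}) and (\ref{proof:2.1:Aeps:phicond1}), so dividing yields the stated identity
\begin{equation*}
\frac{a(t_n)}{a(t_n)+\alpha(t_n)}=\frac{\phi'(t_n)+\beta'(t_n)}{\phi'(t_n)}.
\end{equation*}
Finally, the equivalence $\alpha(t_n)=0\iff\beta'(t_n)=0$ is immediate: the left ratio equals $1$ exactly when $\alpha(t_n)=0$, and the right ratio equals $1$ exactly when $\beta'(t_n)=0$. There is no real obstacle here beyond bookkeeping of signs and invoking Lemma~\ref{proof:Claim1} to justify $\beta(t_n)=0$; the computation is a one-line differentiation once the right points of evaluation are chosen.
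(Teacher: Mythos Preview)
Your proof is correct. The paper reaches the same identity by a slightly different computation: instead of differentiating the identity $a\cos\phi=(a+\alpha)\cos(\phi+\beta)$ and evaluating at $t_n$, it first forms the quotient
\[
\frac{a(t_n+x)}{a(t_n+x)+\alpha(t_n+x)}=\frac{\cos(\phi(t_n+x)+\beta(t_n+x))}{\cos(\phi(t_n+x))}
\]
for small $x\neq 0$, observes that the right-hand side is a $0/0$ form as $x\to 0$, and applies L'H\^opital's rule to obtain $\frac{\phi'(t_n)+\beta'(t_n)}{\phi'(t_n)}$. Both arguments extract exactly the same first-order information at the zeros $t_n$; yours is arguably more direct since it avoids the detour through a limiting quotient and L'H\^opital, while the paper's version makes explicit that the amplitude ratio is continuous across $t_n$ before identifying its value there. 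Either way the substance is the same.
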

\begin{proof}
For $0<x\ll 1$, we have
\begin{align}
(a(t_n+x)+\alpha(t_n+x))\cos(\phi(t_n+x)+\beta(t_n+x))=a(t_n+x)\cos(\phi(t_n+x)),\nonumber
\end{align}
which means that
\[
\frac{a(t_n+x)}{a(t_n+x)+\alpha(t_n+x)}=\frac{\cos(\phi(t_n+x)+\beta(t_n+x))}{\cos(\phi(t_n+x))}.
\]
By the smoothness of $\phi$ and $\beta$, as $x\to 0$, the right hand side becomes 
\begin{align*}
&\lim_{x\to0}\frac{\cos(\phi(t_n+x)+\beta(t_n+x))}{\cos(\phi(t_n+x))}\\
=&\,\lim_{x\to0}\frac{(\phi'(t_n+x)+\beta'(t_n+x)\sin(\phi(t_n+x)+\beta(t_n+x))}{\phi'(t_n+x)\sin(\phi(t_n+x))}\\
=&\,\frac{\phi'(t_n)+\beta'(t_n)}{\phi'(t_n)}.
\end{align*}
Thus, since $a(t_n+x)+\alpha(t_n+x)>0$ and $a(t_n+x)>0$ for all $x$, we have
\[
\frac{a(t_n)}{a(t_n)+\alpha(t_n)}=\frac{\phi'(t_n)+\beta'(t_n)}{\phi'(t_n)}.
\]
\end{proof}

\begin{lemma}\label{proof:Claim5}
$\beta''(t)$ is $0$ or changes sign inside $[t_n,\,t_{n+1}]$ for all $n\in\ZZ$.
\end{lemma}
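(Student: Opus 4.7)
The plan is to mirror the strategy of Lemma~\ref{proof:Claim2}, reducing the statement to Rolle's theorem combined with the integral identity $\int_{t_n}^{t_{n+1}}\beta'(u)\,\ud u = \beta(t_{n+1})-\beta(t_n) = 0$, which follows from Lemma~\ref{proof:Claim1}. The strategy is: if I can exhibit two distinct zeros of $\beta'$ in $[t_n, t_{n+1}]$, then Rolle's theorem applied to $\beta'$ on the subinterval between them produces a zero of $\beta''$ in $[t_n, t_{n+1}]$; together with continuity of $\beta''$, this gives the desired ``$\beta''$ is $0$ or changes sign'' dichotomy.

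First I would dispose of the trivial case. If $\beta'\equiv 0$ on $[t_n, t_{n+1}]$, then $\beta''\equiv 0$ and we are done. Assume henceforth that $\beta'$ is not identically zero; by Lemma~\ref{proof:Claim2}, it then assumes both strictly positive and strictly negative values on $[t_n, t_{n+1}]$. Next, because $\beta(t_n)=\beta(t_{n+1})=0$, Rolle's theorem applied to $\beta$ on $[t_n, t_{n+1}]$ yields a point $c_1\in(t_n, t_{n+1})$ with $\beta'(c_1)=0$. To produce a \emph{second} zero $c_2$ of $\beta'$ in $[t_n, t_{n+1}]$, I would separately handle two cases. If $\alpha(t_n)=0$ (or symmetrically $\alpha(t_{n+1})=0$), Lemma~\ref{proof:Claim4} gives $\beta'(t_n)=0$, which together with $\beta'(c_1)=0$ furnishes the two zeros; then Rolle applied to $\beta'$ on $[t_n, c_1]$ yields some $e\in(t_n, c_1)$ with $\beta''(e)=0$. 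Otherwise, $\beta'(t_n),\beta'(t_{n+1})\neq 0$, and I would argue using the sign information coming from Lemma~\ref{proof:Claim4} (so that $\mathrm{sgn}(\beta'(t_n)) = -\mathrm{sgn}(\alpha(t_n))$) in tandem with Claim~\ref{proof:Claim2} and the integral $\int_{t_n}^{t_{n+1}}\beta'(u)\,\ud u = 0$: if $\beta'$ changed sign only once in the interior, say at $c_1$, the interior extremum structure would force $\beta'$ to be monotonic on each of $[t_n,c_1]$ and $[c_1,t_{n+1}]$, which in turn would force $\mathrm{sgn}(\beta'(t_n)) = -\mathrm{sgn}(\beta'(t_{n+1}))$; the corresponding sign relation on $\alpha$ at $t_n, t_{n+1}$ can then be used to locate a second zero of $\beta'$ via the identity $a\cos\phi=(a+\alpha)\cos(\phi+\beta)$ differentiated once.

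The main obstacle I expect is exactly this last point: confining the second zero of $\beta'$, and hence the zero of $\beta''$, to the \emph{single} interval $[t_n, t_{n+1}]$. A naive Rolle-on-Rolle argument using consecutive zeros $\beta(t_{n-1})=\beta(t_n)=\beta(t_{n+1})=0$ produces two zeros of $\beta'$, one in each of $(t_{n-1}, t_n)$ and $(t_n, t_{n+1})$, and hence a zero of $\beta''$ only in $(t_{n-1}, t_{n+1})$ rather than in $[t_n,t_{n+1}]$. Strengthening this to every individual interval forces one to use the full strength of the relations at $t_n$ and $s_{n+1}$ from Lemmas~\ref{proof:Claim3} and~\ref{proof:Claim4}, together with the positivity of the amplitudes and the identity $a(t)\cos\phi(t)=(a(t)+\alpha(t))\cos(\phi(t)+\beta(t))$; this is the technically delicate part of the argument and where I would focus the bulk of the proof.
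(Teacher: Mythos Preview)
The paper's proof is a single sentence: it cites Lemma~\ref{proof:Claim2} (that $\beta'$ is zero or changes sign on each $[t_n,t_{n+1}]$) and declares the conclusion ``clear''. Your route is far more elaborate---you try to exhibit two distinct zeros of $\beta'$ inside each $[t_n,t_{n+1}]$ and then apply Rolle to $\beta'$---and you correctly observe that a naive double-Rolle from $\beta(t_{n-1})=\beta(t_n)=\beta(t_{n+1})=0$ only confines a zero of $\beta''$ to $(t_{n-1},t_{n+1})$, a subtlety the paper's one-liner does not visibly address.

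However, your own argument does not close that gap. Case~2 is not a proof: the claim that a single sign change of $\beta'$ ``would force $\beta'$ to be monotonic on each of $[t_n,c_1]$ and $[c_1,t_{n+1}]$'' is false as stated, and even granting the (correct) consequence $\operatorname{sgn}\beta'(t_n)=-\operatorname{sgn}\beta'(t_{n+1})$, the plan to ``locate a second zero of $\beta'$ via the identity \ldots\ differentiated once'' is an aspiration, not a derivation. The obstruction is concrete: the parabola $\beta(t)=c\,(t-t_n)(t-t_{n+1})$ satisfies $\beta(t_n)=\beta(t_{n+1})=0$ and the full conclusion of Lemma~\ref{proof:Claim2}, yet $\beta'$ has exactly one interior zero and $\beta''\equiv 2c$ never vanishes on $[t_n,t_{n+1}]$; ruling this shape out would require substantive use of the identity $a\cos\phi=(a+\alpha)\cos(\phi+\beta)$ together with the growth conditions, which you have not supplied. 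If your only downstream goal is Lemma~\ref{proof:Claim7}, note that the easy double-Rolle conclusion (a zero of $\beta''$ in $[t_{n-1},t_{n+1}]$) already suffices there at the cost of a harmless factor of~$2$ in the constants.
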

\begin{proof}
This is clear since $\beta'(t)$ is 0 or changes sign inside $[t_n,\,t_{n+1}]$ for all $n\in\ZZ$ by Lemma \ref{proof:Claim2}.
\end{proof}

In summary, while $\beta(t_m)=0$ for all $m\in \ZZ$, in general we loss the control of $\alpha$ at $t_m$. On $s_m$, $\alpha$ is directly related to $\beta$ by Lemma \ref{proof:Claim3}; on $t_m$, $\alpha$ is directly related to $\beta'$ by Lemma \ref{proof:Claim4}. We could thus call $t_m$ and $s_m$ the {\em hinging points} associated with the function $g$. Note that the control of $\alpha$ and $\beta$ on the hinging points does not depend on $\beta''$'s condition.

To finish the second part of the proof, we have to consider the conditions (\ref{proof:2.1:Aeps:boundcond1}) and (\ref{proof:2.1:Aeps:boundcond2}).
\begin{lemma}
$|\alpha(t)|\leq 2\pi \epsilon$ for all $t\in\RR$. Further, we have $|\beta'(t_n)|\leq \frac{4\pi\phi'(t_n)}{a(t_n)}\epsilon$ for all $n\in\ZZ$.
\end{lemma}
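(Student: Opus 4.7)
The strategy is two-stage: first anchor $\alpha$ at the hinging points $s_m$ by showing $|\alpha(s_m)|$ is of order $\epsilon^2$, then transfer the bound to an arbitrary $t$ by integrating $\alpha'$. Writing $A := a+\alpha$ and $\varphi := \phi+\beta$, the growth conditions \eqref{proof:2.1:Aeps:boundcond1}--\eqref{proof:2.1:Aeps:boundcond2} immediately yield the pointwise estimate $|\alpha'(u)| \leq |a'(u)| + |A'(u)| \leq \epsilon\bigl(\phi'(u) + \varphi'(u)\bigr)$.

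For the anchor bound at $s_m$, I would differentiate $a\cos\phi = A\cos\varphi$ and evaluate at $t = s_m$, where $\sin\phi(s_m) = 0$, $\cos\phi(s_m) = (-1)^m$, $\cos\varphi(s_m) = (-1)^m\cos\beta(s_m)$, and $\sin\varphi(s_m) = (-1)^m\sin\beta(s_m)$. This gives $A(s_m)\varphi'(s_m)\sin\beta(s_m) = A'(s_m)\cos\beta(s_m) - a'(s_m)$, and bounding the right-hand side by the growth conditions yields $|\sin\beta(s_m)| = O(\epsilon)$ (with constants depending only on $c_1,c_2$). Lemma~\ref{proof:Claim3} forces $\cos\beta(s_m) = a(s_m)/A(s_m) > 0$, hence $|\beta(s_m)| < \pi/2$, so actually $|\beta(s_m)| = O(\epsilon)$; consequently $\alpha(s_m) = A(s_m)\bigl(1-\cos\beta(s_m)\bigr) = O(\epsilon^2)$.

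For general $t$, I choose $s_m$ so that $t \in [s_m, s_{m+1}]$ and write $|\alpha(t)| \leq |\alpha(s_m)| + \int_{s_m}^t |\alpha'(u)|\,\ud u$. The integral is bounded by
\[
\epsilon\int_{s_m}^{s_{m+1}}\!\bigl(\phi' + \varphi'\bigr)\,\ud u \,=\, \epsilon\bigl[\pi + \pi + (\beta(s_{m+1}) - \beta(s_m))\bigr] \,=\, 2\pi\epsilon + O(\epsilon^2),
\]
since $\phi$ advances by exactly $\pi$ across $[s_m, s_{m+1}]$ while $\varphi$ advances by $\pi + \beta(s_{m+1}) - \beta(s_m)$, with the correction $O(\epsilon)$ by the anchor step. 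Combining with the anchor bound gives $|\alpha(t)| \leq 2\pi\epsilon + O(\epsilon^2) \leq 2\pi\epsilon$ for $\epsilon$ sufficiently small. The second half of the lemma is then immediate from Lemma~\ref{proof:Claim4}, which gives $\beta'(t_n) = -\phi'(t_n)\alpha(t_n)/A(t_n)$; using $|\alpha(t_n)|\leq 2\pi\epsilon$ together with $A(t_n) \geq a(t_n)/2$ (valid since $|\alpha(t_n)| \leq 2\pi\epsilon \leq a(t_n)/2$ for $\epsilon \ll c_1$) yields $|\beta'(t_n)| \leq 4\pi\phi'(t_n)\epsilon/a(t_n)$.

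The main obstacle is the bookkeeping of the $O(\epsilon^2)$ corrections in the integration step, so that the final constant comes out cleanly as $2\pi\epsilon$ rather than $2\pi\epsilon$ plus a residue; this is absorbed using the standing assumption $\epsilon \ll 1$ (with an effective smallness threshold of order $c_1$). A secondary technical point is ensuring the chosen $s_m$ always lies in a neighborhood of $t$ of bounded length, which follows from $\phi' \geq c_1$ controlling the spacing of hinging points.
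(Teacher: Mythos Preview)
Your approach is correct and takes a genuinely different route from the paper. The paper argues by contradiction: assuming $\alpha(t')>2\pi\epsilon$ for some $t'\in(t_m,t_{m+1}]$, it uses the same derivative bound $|\alpha'|\leq\epsilon(\phi'+\varphi')$ but integrates over $(t_m,t_{m+1})$, where $\beta$ vanishes \emph{exactly} at both endpoints, so the total variation of $\alpha$ is at most $\epsilon\bigl[(\phi+\beta)(t_{m+1})-(\phi+\beta)(t_m)+\phi(t_{m+1})-\phi(t_m)\bigr]=2\pi\epsilon$ with no residue. This forces $\alpha>0$ on the whole interval, and a contradiction is then extracted from the identity $a/(a+\alpha)=\cos(\phi+\beta)/\cos\phi$. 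Your argument is more direct and introduces a new auxiliary fact the paper does not state: by differentiating the defining identity at the extremal points $s_m$ you obtain $|\sin\beta(s_m)|=O(\epsilon)$, hence $\alpha(s_m)=A(s_m)(1-\cos\beta(s_m))=O(\epsilon^2)$, and then you simply integrate $\alpha'$ from this anchor. This avoids the contradiction step entirely and is arguably cleaner.

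The trade-off is the constant. Because $\beta$ does not vanish exactly at $s_m$, your integral bound is $2\pi\epsilon+O(\epsilon^2)$ rather than $2\pi\epsilon$ sharp, and the claim that the residue is ``absorbed using $\epsilon\ll1$'' does not literally give $|\alpha(t)|\leq 2\pi\epsilon$: it gives $|\alpha(t)|\leq(2\pi+C\epsilon)\epsilon$ with $C=C(c_1,c_2)$. For the purposes of Theorem~\ref{theorem:identifiability:single} this is entirely adequate, since only an $O(\epsilon)$ bound is used downstream; but if the exact constant $2\pi$ is required, one needs the paper's anchoring at the $t_m$, where the endpoint contributions cancel exactly. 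Your derivation of the bound on $\beta'(t_n)$ from Lemma~\ref{proof:Claim4} is identical to the paper's.
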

\begin{proof}
Suppose there exists $t'$ so that $\alpha(t')> 2\pi \epsilon$. The case $\alpha(t')<-2\pi\epsilon$ can be proved in the same way. Take $m\in\ZZ$ so that $t'\in(t_m,t_{m+1}]$. 
From (\ref{proof:2.1:Aeps:boundcond1}) and (\ref{proof:2.1:Aeps:boundcond2}) we have 
\begin{align*}
|\alpha'(t)|\leq \epsilon(2\phi'(t)+\beta'(t)).
\end{align*}
Thus, take $t\in(t_m,t_{m+1})$. Without loss of generality, we could assume $t\in(t_m,t')$, we have by the fundamental theorem of calculus
\begin{align*}
|\alpha(t')-\alpha(t)|&~\leq \int_{t}^{t'}|\alpha'(u)|\ud u\leq \epsilon[2\phi(t')-2\phi(t)+\beta(t')-\beta(t)]\\
&~\leq \epsilon[(\phi(t_{m+1})+\beta(t_{m+1})-\phi(t_m)-\beta(t_m))+(\phi(t_{m+1})-\phi(t_m))]\leq 2\pi\epsilon,
\end{align*}
where the last inequality holds due to the fact that $\phi+\beta$ and $\phi$ are both monotonic and Lemma \ref{proof:Claim2}.
This fact leads to $\alpha(t)>0$ for all $t\in(t_m,t_{m+1}]$. Since $\beta(t_m)=0$ for all $m\in\ZZ$, there exists $\tilde{t}\in(t_m,t_{m+1})$ such that $\cos(\phi(\tilde{t})+\beta(\tilde{t}))>\cos(\phi(\tilde{t}))$. However, by the assumption and the above derivatives, we know that
\begin{align}
1>\frac{a(\tilde{t})}{a(\tilde{t})+\alpha(\tilde{t})}=\frac{\cos(\phi(\tilde{t})+\beta(\tilde{t}))}{\cos(\phi(\tilde{t}))},
\end{align}
which is absurd. Thus, we have obtained the first claim.

The second claim could be obtained by taking Lemma \ref{proof:Claim4} into account. Indeed, since $\beta'(t_n)=\frac{-\phi'(t_n)}{a(t_n)+\alpha(t_n)}\alpha(t_n)$ and $|\alpha(t)|\leq 2\pi\epsilon$, when $\epsilon$ is small enough, $|\beta'(t_n)|\leq \frac{2\phi'(t_n)}{a(t_n)}|\alpha(t_n)|\leq \frac{4\pi\phi'(t_n)}{a(t_n)}$.
\end{proof}
Thus we obtain the control of the amplitude. Note that the proof does not depend on the condition about $\beta''$.

\begin{lemma}\label{proof:Claim7}
$|\beta''(t)|\leq 2\pi \epsilon$, $|\beta'(t)|\leq \frac{2\pi\epsilon}{c_1} $ and $|\beta(t)|\leq \frac{2\pi\epsilon}{c_1^2} $ for all $t\in\RR$.
\end{lemma}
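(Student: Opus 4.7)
The plan is to bootstrap from the qualitative sign-change results in Lemmas \ref{proof:Claim1}, \ref{proof:Claim2}, and \ref{proof:Claim5} using the growth conditions on the third derivatives. Throughout I will work on a single interval $[t_n, t_{n+1}]$ and get uniform bounds that are independent of $n$.

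First, I observe that the length of each hinging interval is controlled. Since $\phi(t_{n+1}) - \phi(t_n) = \pi$ and $\phi'(t) \geq c_1$ by (\ref{proof:2.1:Aeps:phicond1}), the mean value theorem gives $t_{n+1} - t_n \leq \pi/c_1$. Next, subtracting the two growth conditions (\ref{proof:2.1:Aeps:boundcond1}) and (\ref{proof:2.1:Aeps:boundcond2}) yields the key pointwise inequality
\begin{equation*}
|\beta'''(t)| \leq \epsilon\bigl(2\phi'(t) + |\beta'(t)|\bigr) \leq \epsilon\bigl(2c_2 + \|\beta'\|_\infty\bigr),
\end{equation*}
which I will combine with the locations of zeros of $\beta''$ and $\beta'$ provided by the earlier claims.

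The bootstrap proceeds in three stages. For the \emph{a priori} step, conditions (\ref{proof:2.1:Aeps:phicond1}) and (\ref{proof:2.1:Aeps:phicond2}) give the crude bound $|\beta''(t)| \leq |\phi''(t)| + |\phi''(t)+\beta''(t)| \leq 2c_3$; then Lemma \ref{proof:Claim2} supplies $\sigma_n \in [t_n, t_{n+1}]$ with $\beta'(\sigma_n)=0$, so integration yields $\|\beta'\|_\infty \leq (\pi/c_1)\cdot 2c_3$, a finite (but not small) bound. For the refined step, I plug this a priori bound into the third-derivative inequality to obtain $\|\beta'''\|_\infty = O(\epsilon)$; then Lemma \ref{proof:Claim5} provides $\tau_n \in [t_n, t_{n+1}]$ with $\beta''(\tau_n)=0$, so for $t \in [t_n, t_{n+1}]$,
\begin{equation*}
|\beta''(t)| = \left|\int_{\tau_n}^{t} \beta'''(u)\,\ud u\right| \leq \frac{\pi}{c_1}\,\|\beta'''\|_\infty = O(\epsilon),
\end{equation*}
which after tracking constants is to be identified with the claimed bound $2\pi\epsilon$. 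Re-applying Lemma \ref{proof:Claim2} with this refined control on $\beta''$ gives $|\beta'(t)| \leq (\pi/c_1)\sup|\beta''| \leq 2\pi\epsilon/c_1$; and finally, since $\beta(t_n)=0$ by Lemma \ref{proof:Claim1}, integrating $\beta'$ over $[t_n,t]$ yields $|\beta(t)| \leq (\pi/c_1)\sup|\beta'| \leq 2\pi\epsilon/c_1^2$.

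The main obstacle is arranging the bootstrap so that the constants collapse to precisely $2\pi\epsilon$, $2\pi\epsilon/c_1$ and $2\pi\epsilon/c_1^2$ rather than the cruder $O(\epsilon)$ that the naive triangle-inequality estimates produce (my sketch picks up additional factors of $\pi$, $c_2$, and $c_3$ along the way). Tightening these will require a more careful use of the structure of the hinging interval, most likely by integrating $\beta'''$ between consecutive zeros of $\beta''$ of opposite sign and exploiting cancellation from $\int_{t_n}^{t_{n+1}}\beta'(u)\,\ud u=0$ (which follows from $\beta(t_n)=\beta(t_{n+1})=0$), rather than estimating $|\beta'''|$ pointwise by its supremum. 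All bounds are uniform in $n$ and thus extend to all $t \in \RR$, which completes the argument.
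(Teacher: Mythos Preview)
Your bootstrap scheme is correct and does yield $O(\epsilon)$ bounds on $\beta''$, $\beta'$, and $\beta$; there is no genuine gap. But the constant-sharpening issue you flag at the end is exactly where the paper's argument differs, and the fix is simpler than the cancellation idea you suggest.

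The paper does not bound $|\beta'''|$ pointwise by its supremum. Instead it writes, from (\ref{proof:2.1:Aeps:boundcond1}) and (\ref{proof:2.1:Aeps:boundcond2}),
\[
|\beta'''(t)|\;\leq\;\epsilon\bigl(2\phi'(t)+\beta'(t)\bigr)\;=\;\epsilon\bigl[(\phi'(t)+\beta'(t))+\phi'(t)\bigr],
\]
and integrates this directly over any subinterval of $[t_n,t_{n+1}]$. Both $\phi$ and $\phi+\beta$ are strictly increasing, and since $\beta(t_n)=\beta(t_{n+1})=0$ we have $\phi(t_{n+1})-\phi(t_n)=(\phi+\beta)(t_{n+1})-(\phi+\beta)(t_n)=\pi$. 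Hence for any $t,t'\in[t_n,t_{n+1}]$,
\[
\int_{t}^{t'}|\beta'''(u)|\,\ud u\;\leq\;\epsilon\Bigl[(\phi+\beta)\big|_{t}^{t'}+\phi\big|_{t}^{t'}\Bigr]\;\leq\;2\pi\epsilon,
\]
with no factors of $c_2$, $c_3$, or $\pi/c_1$ and no need for the a~priori step. The paper then packages this as a contradiction: if $|\beta''(t')|$ exceeded $2\pi\epsilon$, the oscillation bound above would force $\beta''$ to keep a fixed sign on all of $[t_n,t_{n+1}]$, contradicting Lemma~\ref{proof:Claim5}. Your direct route (integrate from the zero $\tau_n$ of $\beta''$) works equally well once you use this integral estimate instead of the sup-norm one. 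The remaining bounds on $\beta'$ and $\beta$ then follow by integrating once and twice more from the respective zeros, essentially as you wrote.
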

\begin{proof}
Suppose there existed $t'\in(t_m,t_{m+1})$  for some $m\in\ZZ$ so that $|\beta''(t')|>3\pi \epsilon$. Without loss of generality, we assume $\beta''(t')>0$.  
From (\ref{proof:2.1:Aeps:boundcond1}) and (\ref{proof:2.1:Aeps:boundcond2}) we have 
\begin{align*}
|\beta'''(t)|\leq \epsilon(2\phi'(t)+\beta'(t)).
\end{align*}
Thus, by the fundamental theorem of calculus, for any $t\in (t_m,t')$, we know 
\begin{align*}
|\beta''(t')-\beta''(t)|&\,\leq \int^{t'}_{t}|\beta'''(u)|\ud u\leq \epsilon \int^{t'}_{t}(2\phi'(t)+\beta'(t) )\ud u \leq 2\pi\epsilon,
\end{align*}
where the last inequality holds due to Lemma \ref{proof:Claim2} and the fact that $\phi(t')-\phi(t)\leq \phi(t_{m+1})-\phi(t_m)=\pi$ and $|\beta(t')-\beta(t)|<\pi$ from Lemma \ref{proof:Claim2}. Similarly, we have that for all $t\in (t',t_{m+1})$, $|\beta''(t')-\beta''(t)|\leq 2\pi\epsilon$.
Thus, $\beta''(t)>0$ for all $t\in[t_m,t_{m+1}]$, 
which contradicts the fact that $\beta''(t)$ must change sign inside $[t_m,t_{m+1}]$ by Lemma \ref{proof:Claim5}.

With the upper bound of $|\beta''|$, we immediately have for all $t\in[t_m,t_{m+1}]$ that
\[
|\beta'(t)-\beta'(t_m)|\leq \int^t_{t_m}|\beta''(u)|\ud u\leq 2\pi(t-t_m)\epsilon. 
\]
To bound the right hand side, note that $t-t_m\leq t_{m+1}-t_m\leq \frac{\pi}{\phi'(t')}$, where $t'\in [t_m,t_{m+1}]$. Since $|\beta''(t)|\leq 2\pi\epsilon$, when $\epsilon$ is small enough, $\frac{\pi}{\phi'(t')}\leq \frac{2\pi}{\phi'(t)}$. Thus, $|\beta'(t)-\beta'(t_m)|\leq2\pi(t-t_m)\epsilon\leq \frac{4\pi^2}{\phi'(t)}\epsilon$. To finish the proof, note that by Lemma \ref{proof:Claim4}, $|\beta'(t_m)|\leq \frac{4\pi\phi'(t_n)}{a(t_n)}\epsilon$. Similarly, we have the bound for $\beta$.
\end{proof}

\section{Proof of Theorem \ref{theorem:identifiability:multiple}}

When there are more than one gIMT in a given oscillatory signal $f\in \mathcal{Q}^{c_1,c_2,c_3}_{\epsilon,d}$, we loss the control of the hinging points for each gIMT like those, $t_m$ and $s_m$, in Theorem \ref{theorem:identifiability:single}. So the proof will be more qualitative.
Suppose $f=\tilde{f}\in \mathcal{Q}^{c_1,c_2,c_3}_{\epsilon,d}$, where
\[
f(t)=\sum_{l=1}^Na_l(t)\cos [2\pi\phi_l(t)],\quad \tilde{f}(t)=\sum_{l=1}^MA_l(t)\cos[2\pi\varphi_l(t)].
\]
Fix $t_0\in\RR$. Denote $f_{t_0}:=\sum_{l=1}^Nf_{t_0,l}$, $\tilde{f}_{t_0}:=\sum_{l=1}^M\tilde{f}_{t_0,l}$, 
\[
f_{t_0,l}(t):=a_l(t_0)\cos\left[2\pi\left(\phi_l(t_0)+\phi'_l(t_0)(t-t_0)+\phi''_l(t_0)\frac{(t-t_0)^2}{2}\right)\right]
\]
and
\[
\tilde{f}_{t_0,l}(t):=A_l(t_0) \cos\left[2\pi\left(\varphi_l(t_0) + \varphi'_l(t_0)(t-t_0)+\varphi''_l(t_0)\frac{(t-t_0)^2}{2}\right)\right].
\]
Note that $f_{t_0,l}$ is an approximation of $a_l(t)\cos [2\pi\phi_l(t)]$ near $t_0$ based on the assumption of $\mathcal{Q}^{c_1,c_2,c_3}_{\epsilon,d}$, where we approximate the amplitude $a_l(t)$ by the zero-th order Taylor expansion and the phase function $\phi_l(t)$ by the second order Taylor expansion. To simplify the proof, we focus on the case that $|\phi''_l(t_0)|>\epsilon|\phi'_l(t_0)|$ and $|\varphi''_l(t_0)|>\epsilon|\varphi'_l(t_0)|$ for all $l$. For the case when there is one or more $l$ so that $|\phi''_l(t_0)|\leq\epsilon |\phi'_l(t_0)|$, the proof follows the same line while we approximate the phases of these oscillatory components by the first order Taylor expansion. 

Recall that the short time Fourier transform (STFT) of a given tempered distribution $\mathsf{f}\in \mathcal{S}'$ associated with a Schwartz function $g\in \mathcal{S}$ as the window function is defined as
\[
V^{(\mathsf{g})}_{\mathsf{f}}(t,\eta):=\int_{\RR}\mathsf{f}(x)g(x-t)e^{-i2\pi\eta x}\ud x.
\]
Note that by definition $f,f_{t_0},\tilde{f}_{t_0}\in \mathcal{S}'$. To prove the theorem, we need the following lemma about the STFT.

\begin{lemma}\label{lemma:identifiability:Wf_expansion}
For a fixed $t_0\in\RR$, we have
\begin{align}
\Big|V^{(g)}_{f}(\tau,\eta)&-V^{(g)}_{f_{t_0}}(\tau,\eta)\Big|=O(\epsilon).\nonumber
\end{align}
where $C$ is a universal constant depending on $c_1$, $c_2$ and $d$. 
\end{lemma}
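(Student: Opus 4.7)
The plan is to exploit linearity of the STFT and Taylor-expand each gIMT component about $t_0$, controlling the remainder using the growth conditions (\ref{AHM:Condition:Growth}). By linearity,
\begin{equation*}
V^{(g)}_f(\tau,\eta) - V^{(g)}_{f_{t_0}}(\tau,\eta) = \sum_{l=1}^N\int_\RR \bigl[a_l(x)\cos(2\pi\phi_l(x)) - f_{t_0,l}(x)\bigr]\, g(x-\tau)\, e^{-i2\pi\eta x}\,\ud x,
\end{equation*}
so it suffices to show that each summand is $O(\epsilon)$ and then invoke $N\leq (c_2-c_1)/d$.

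For a single component, I would split the cosine into its two complex exponentials and write the resulting integrand as
\begin{equation*}
a_l(x)e^{\pm i2\pi\phi_l(x)} - a_l(t_0)e^{\pm i2\pi P_l(x)} = [a_l(x)-a_l(t_0)]e^{\pm i2\pi\phi_l(x)} + a_l(t_0)\bigl[e^{\pm i2\pi\phi_l(x)} - e^{\pm i2\pi P_l(x)}\bigr],
\end{equation*}
where $P_l(x):=\phi_l(t_0)+\phi'_l(t_0)(x-t_0)+\phi''_l(t_0)(x-t_0)^2/2$ is the degree-two Taylor polynomial of $\phi_l$ at $t_0$. The amplitude difference is controlled by $|a'_l|\leq\epsilon\phi'_l\leq \epsilon c_2$, giving $|a_l(x)-a_l(t_0)|\leq \epsilon c_2|x-t_0|$. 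For the phase, the second-order Taylor remainder combined with $|\phi'''_l|\leq \epsilon\phi'_l\leq \epsilon c_2$ yields $|\phi_l(x)-P_l(x)|\leq \epsilon c_2 |x-t_0|^3/6$, and then $|e^{i2\pi\phi_l(x)}-e^{i2\pi P_l(x)}|\leq 2\pi|\phi_l(x)-P_l(x)|$ by the mean value theorem applied to $e^{is}$.

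Combining these bounds and using $|a_l(t_0)|\leq c_2$, the difference integrated against $|g(x-\tau)|$ is dominated, up to a universal constant, by
\begin{equation*}
\epsilon\int_\RR \bigl(|x-t_0| + |x-t_0|^3\bigr)\,|g(x-\tau)|\,\ud x.
\end{equation*}
Since $g\in\mathcal{S}$, every polynomial moment $\int|x-\tau|^k|g(x-\tau)|\,\ud x$ is finite, and expanding $|x-t_0|^k$ into $|x-\tau|^j|\tau-t_0|^{k-j}$ gives a bound of the form $\epsilon\cdot C_g(1+|\tau-t_0|^3)$, where $C_g$ depends only on low-order absolute moments of $g$ and on $c_2$. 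Summing over the finitely many components yields the stated $O(\epsilon)$ bound.

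The main obstacle is the bookkeeping around the cubic growth of the Taylor remainder in $|x-t_0|$: the constant implicit in the $O(\epsilon)$ notation depends polynomially on $|\tau-t_0|$, reflecting the fact that the linearization $f_{t_0}$ is an accurate proxy for $f$ only in a neighborhood of $t_0$ and must be damped by the window $g$. A secondary point is the separate treatment of any component for which $|\phi''_l(t_0)|\leq \epsilon\phi'_l(t_0)$, where, as indicated in the paper, $f_{t_0,l}$ should be defined with only a first-order phase expansion; the same scheme applies, with the remainder then controlled by $\epsilon c_2|x-t_0|^2$ instead of $\epsilon c_2|x-t_0|^3/6$.
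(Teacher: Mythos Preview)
Your proposal is correct and follows essentially the same approach as the paper: linearity of the STFT, splitting each component into an amplitude-difference term and a phase-difference term, and controlling both via the growth conditions $|a_l'|\leq\epsilon\phi_l'$ and $|\phi_l'''|\leq\epsilon\phi_l'$ together with the finite moments of the Schwartz window. The paper's own proof is terse---it simply writes down the triangle-inequality split and declares the result $O(\epsilon)$ by reference to \cite{Daubechies_Lu_Wu:2011,Chen_Cheng_Wu:2014}---so your argument is in fact a more detailed version of the same computation.

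One point worth noting: your observation that the implicit constant grows like $1+|\tau-t_0|^3$ is accurate, and the paper's statement of a ``universal constant depending on $c_1,c_2,d$'' is slightly misleading as written. In the paper's proof the window is centered at $t_0$ (the integrals are against $g(t-t_0)$, not $g(t-\tau)$), and the lemma is only ever applied at $\tau=t_0$, where your polynomial factor reduces to~$1$. So the discrepancy is cosmetic rather than substantive, but your bookkeeping is the more honest one.
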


\begin{proof}
Fix a time $t_0\in\RR$. By the same argument as that in \cite{Daubechies_Lu_Wu:2011,Chen_Cheng_Wu:2014} and the conditions of $\mathcal{Q}_{\epsilon,d}^{c_1,c_2,c_3}$, we immediately have
\begin{align*}
&|V^{(g)}_{f}(\tau,\eta)-V^{(g)}_{f_{t_0}}(\tau,\eta)|\\
=\,&\left|\int_{\RR}(f(t)-f_{t_0}(t))g(t-\tau)e^{-i2\pi\eta t}\ud t\right|\\
\leq\,&\sum_{l=1}^N\left|\int_{\RR}(a_l(t)-a_l(t_0))\cos [2\pi\phi_l(t)]g(t-t_0)e^{-i2\pi\eta t}\ud t\right|\\
&+\sum_{l=1}^N\left|\int_{\RR}a_l(t_0)\big(\cos [2\pi\phi_l(t)]-\cos[2\pi(\phi(t_0)+\phi'(t_0)(t-t_0)\right.\\
&\qquad\qquad\left.+\frac{1}{2}\phi''(t_0)(t-t_0)^2)]\big)g(t-t_0)e^{-i2\pi\eta t}\ud t\right|\\
=\,&O(\epsilon),   
\end{align*}
where the last term depends only on the first few absolute moments of $g$ and $g'$, $d$, $c_1$ and $c_2$.
\end{proof}

With this claim, we know in particular that $V^{(g)}_f(t_0,\eta)=V^{(g)}_{f_{t_0}}(t_0,\eta)+O(\epsilon)$. As a result, the spectrogram of $f$ and $f_{t_0}$ are related by
\[
|V^{(g)}_f(t_0,\eta)|^2=|V^{(g)}_{f_{t_0}}(t_0,\eta)|^2+O(\epsilon).
\]
Next, recall that the spectrogram of a signal is intimately related to the Wigner-Ville distribution in the following way
\[
|V^{(g)}_{f_{t_0}}(\tau,\eta)|^2=\int\int WV_{f_{t_0}}(x,\xi)WV_{g}(x-\tau,\xi-\eta)\ud x\ud\xi,
\]
where the Wigner-Ville distribution of a function $h$ in the suitable space is defined as
\[
WV_{h}(x,\xi):=\int h(x+\tau/2)h^*(x-\tau/2)e^{-i2\pi \tau \xi}\ud \tau.
\]

\begin{lemma}\label{proof:Theorem2:claim2}
Take $g(t)=(2\sigma)^{1/4}\exp\left\{-\pi\sigma t^2\right\}$, where $\sigma=c_3$. When $d$ is large enough described in (\ref{proof:Thm2:dlowerbound}), we have
\[
|V^{(g)}_{f}(t_0,\eta)|^2=L(t_0,\eta)+\epsilon \quad\mbox{and}\quad |V^{(g)}_{\tilde{f}}(t_0,\eta)|^2=\widetilde{L}(t_0,\eta)+\epsilon,
\]
where
$$
L(t_0,\eta):=\sum_{l=1}^{N}a^2_l(t_0)\sqrt{\frac{\sigma}{2(\sigma^2+\phi''_l(t_0)^2)}}\exp\left\{-\frac{2\pi\sigma(\phi_l'(t_0)-\eta)^2}{\sigma^2+\phi''_l(t_0)^2}\right\}
$$ 
and
$$
\widetilde{L}(t_0,\eta):=\sum_{l=1}^{M}A^2_l(t_0)\sqrt{\frac{\sigma}{2(\sigma^2+\varphi''_l(t_0)^2)}}\exp\left\{-\frac{2\pi\sigma(\varphi_l'(t_0)-\eta)^2}{\sigma^2+\varphi''_l(t_0)^2}\right\}.
$$
\end{lemma}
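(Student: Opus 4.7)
The plan is to reduce the computation of the spectrogram $|V^{(g)}_f(t_0,\eta)|^2$ to an explicit Gaussian integral via Lemma \ref{lemma:identifiability:Wf_expansion}, and then extract the diagonal contribution while absorbing the off-diagonal contributions into the $O(\epsilon)$ remainder using the separation parameter $d$. First I would invoke Lemma \ref{lemma:identifiability:Wf_expansion} to replace $V^{(g)}_f(t_0,\eta)$ by $V^{(g)}_{f_{t_0}}(t_0,\eta)$ up to an $O(\epsilon)$ error; squaring and using that the STFT with a unit-$L^2$ Gaussian window is uniformly bounded (by $\|f\|_\infty \|g\|_{L^1}\leq c_2(2/\sigma)^{1/4}$) then gives $|V^{(g)}_f(t_0,\eta)|^2=|V^{(g)}_{f_{t_0}}(t_0,\eta)|^2+O(\epsilon)$. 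The same reduction applies to $\tilde f$.

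Next I would compute $V^{(g)}_{f_{t_0,l}}(t_0,\eta)$ in closed form. Writing cosine as $\tfrac{1}{2}(e^{i\Theta_l(t)}+e^{-i\Theta_l(t)})$ with $\Theta_l(t)=2\pi[\phi_l(t_0)+\phi'_l(t_0)(t-t_0)+\tfrac{1}{2}\phi''_l(t_0)(t-t_0)^2]$, substituting $s=t-t_0$, and using the complex Gaussian identity $\int_\RR e^{-\pi z s^2+i2\pi b s}\,\ud s=z^{-1/2}e^{-\pi b^2/z}$ for $\Re z>0$, one obtains
\begin{align}
V^{(g)}_{f_{t_0,l}}(t_0,\eta)=\frac{a_l(t_0)}{2}\,e^{-i2\pi\eta t_0}\Bigl[e^{i2\pi\phi_l(t_0)}\mathcal{I}^+_l(\eta)+e^{-i2\pi\phi_l(t_0)}\mathcal{I}^-_l(\eta)\Bigr],
\end{align}
where $\mathcal{I}^\pm_l(\eta)=(2\sigma)^{1/4}(\sigma\mp i\phi''_l(t_0))^{-1/2}\exp\!\bigl(-\pi(\pm\phi'_l(t_0)-\eta)^2/(\sigma\mp i\phi''_l(t_0))\bigr)$. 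Since $\phi'_l(t_0)\geq c_1$ and (for the spectrogram in the positive-frequency regime) $\eta\geq 0$, the ``$-$'' term decays like $\exp(-\pi\sigma(\phi'_l(t_0)+\eta)^2/(\sigma^2+c_3^2))$, which is $O(\epsilon)$ once $d$ satisfies the stated lower bound. Taking the modulus squared of the ``$+$'' contribution and using $\Re(1/(\sigma-ib))=\sigma/(\sigma^2+b^2)$ reproduces the $l$-th summand of $L(t_0,\eta)$ up to the universal constant factor indicated in the statement.

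Finally, squaring the sum $V^{(g)}_{f_{t_0}}=\sum_l V^{(g)}_{f_{t_0,l}}$ produces the diagonal terms $\sum_l|V^{(g)}_{f_{t_0,l}}|^2$, which equals $L(t_0,\eta)+O(\epsilon)$ by the previous step, plus cross-terms for $l\neq k$. Each cross-term is bounded in modulus by
\begin{align}
\frac{c_2^2}{4}\sqrt{\frac{2\sigma}{\sqrt{(\sigma^2+\phi''_l{}^2)(\sigma^2+\phi''_k{}^2)}}}\exp\!\left(-\frac{\pi\sigma(\phi'_l-\eta)^2}{\sigma^2+\phi''_l{}^2}-\frac{\pi\sigma(\phi'_k-\eta)^2}{\sigma^2+\phi''_k{}^2}\right).
\end{align}
With the choice $\sigma=c_3$, the denominators are bounded by $2c_3^2$, and minimizing the exponent in $\eta$ gives at best $-\pi d^2/(4c_3)$ (using $(\phi'_l-\phi'_k)^2\geq d^2$ and the elementary inequality $(x-\eta)^2+(y-\eta)^2\geq (x-y)^2/2$). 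Choosing $d^2\geq 4\ln c_2+\ln c_3-2\ln\epsilon$ (essentially the stated bound, with the $\pi$ absorbed into constants) makes every cross-term $O(\epsilon)$; since there are at most $K^2\leq (c_2/d)^2$ of them, the total contribution remains $O(\epsilon)$. The same argument with $A_l,\varphi_l$ in place of $a_l,\phi_l$ proves the second formula.

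The main obstacle, as usual in this kind of argument, will be calibrating the exact lower bound on $d$: the cross-term exponent depends on both $(\sigma^2+\phi''_l(t_0)^2)$ (which can be as large as $2c_3^2$) and on the prefactor $\sqrt{2\sigma/(\sigma^2+\phi''_l{}^2)}$, while the number of interaction pairs grows like $1/d^2$; I would track these factors carefully to verify that the stated $d\geq\sqrt{2\ln c_2+\tfrac{1}{2}\ln c_3-\ln\epsilon}$ is indeed sufficient (perhaps up to harmless universal constants that can be hidden in the remainder symbol $\epsilon$ in the statement).
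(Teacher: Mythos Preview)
Your proposal is correct and arrives at the same conclusion, but by a genuinely different route than the paper. The paper does not compute $V^{(g)}_{f_{t_0,l}}$ at all; instead it invokes the identity $|V^{(g)}_{f_{t_0,l}}(\tau,\eta)|^2=\iint WV_{f_{t_0,l}}(x,\xi)\,WV_g(x-\tau,\xi-\eta)\,\ud x\,\ud\xi$, writes down the Wigner--Ville distributions of the Gaussian window and of the linear chirp explicitly (the latter is a Dirac on the instantaneous-frequency line), and performs the resulting one-dimensional Gaussian integral to obtain the $l$-th summand of $L(t_0,\eta)$ directly. Your approach computes the STFT itself via the complex Gaussian identity, then squares; this is more elementary (no Wigner--Ville machinery) and has the side benefit of making the negative-frequency contribution $\mathcal{I}^-_l$ explicit, which the paper's treatment glosses over by effectively working with the analytic chirp. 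Two small points to tighten: the smallness of $\mathcal{I}^-_l$ is governed by the lower bound $\phi'_l(t_0)\geq c_1$, not by $d$, so adjust that sentence; and when you square the ``$+$'' term alone you obtain a prefactor $(a_l/2)^2$ rather than $a_l^2$, so reconcile the factor of $4$ with the paper's convention (the paper's Wigner--Ville computation uses the complex chirp with amplitude $a_l$, which accounts for the discrepancy). The cross-term estimate is handled the same way in both arguments; the paper sums the double series by comparison with a Gaussian integral to extract the stated threshold on $d$, whereas your completing-the-square bound gives the same exponent up to the harmless constants you already flag.
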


\begin{proof}
By a direct calculation, the Wigner-Ville distribution of the Gaussian function $g(t)=(2\sigma)^{1/4}\exp\left\{-\pi\sigma t^2\right\}$ with the unit energy, where $\sigma>0$, is 
\[
WV_g(x,\xi)=2\exp\left\{-2\pi\left(\sigma x^2+\frac{\xi^2}{\sigma}\right)\right\};
\]
similarly, the Wigner-Ville distribution of $f_{t_0,l}$ is
\[
WV_{f_{t_0,l}}(x,\xi)=a^2_l(t_0)\delta_{\phi'_l(t_0)+\phi''_l(t_0)(x-t_0)}(\xi).
\]
Thus, we know 
\begin{align}
&\left|V^{(g)}_{f_{t_0,l}}(t_0,\eta)\right|^2\label{proof:Thm2:Vonechirp}\\
=&\int\int WV_{f_{t_0,l}}(x,\xi)WV_{g}(x-t_0,\xi-\eta)\ud x\ud\xi\nonumber\\
=&\int\int \left(a^2_l(t_0)\delta_{\phi'_l(t_0)+\phi''_l(t_0)(x-t_0)}(\xi)\right)2\exp\left\{-2\pi\left(\sigma (x-t_0)^2+\frac{(\xi-\eta)^2}{\sigma}\right)\right\}\ud \xi\ud x\nonumber\\
=&2a^2_l(t_0)\int \exp\left\{-2\pi\left(\sigma (x-t_0)^2+\frac{(\phi_l'(t_0)+\phi_l''(t_0)(x-t_0)-\eta)^2}{\sigma}\right)\right\}\ud x\nonumber\\
=&a^2_l(t_0)\sqrt{\frac{\sigma}{2(\sigma^2+\phi''_l(t_0)^2)}}\exp\left\{-\frac{2\pi\sigma}{\sigma^2+\phi''_l(t_0)^2}(\phi_l'(t_0)-\eta)^2\right\}.\nonumber
\end{align}
Thus, we have the expansion of $\sum_{l=1}^N|V^{(g)}_{f_{t_0,l}}(t_0,\eta)|^2$, which is $L(t_0,\eta)$.
Next, we clearly have 
\begin{align*}
&\left|V^{(g)}_{f_{t_0}}(\tau,\eta)|^2-\sum_{l=1}^N|V^{(g)}_{f_{t_0,l}}(\tau,\eta)|^2\right|=\left|\Re \sum_{k\neq l}V^{(g)}_{f_{t_0,l}}(\tau,\eta) \overline{V^{(g)}_{f_{t_0,k}}(\tau,\eta)}\right|\\
\leq& \sum_{k\neq l}\left|V^{(g)}_{f_{t_0,l}}(\tau,\eta)\right| \left|V^{(g)}_{f_{t_0,k}}(\tau,\eta)\right|.
\end{align*}
To bound the right hand side, note that (\ref{proof:Thm2:Vonechirp}) implies 
$$
\left|V^{(g)}_{f_{t_0,l}}(t_0,\eta)\right|= a_l(t_0)\left(\frac{\sigma}{2(\sigma^2+\phi''_l(t_0)^2)}\right)^{1/4}\exp\left\{-\frac{\pi\sigma(\phi_l'(t_0)-\eta)^2}{\sigma^2+\phi''_l(t_0)^2}\right\}.
$$
As a result, $\sum_{k\neq l}\left|V^{(g)}_{f_{t_0,l}}(t_0,\eta)\right| \left|V^{(g)}_{f_{t_0,k}}(t_0,\eta)\right|$ becomes
\begin{align*}
&\, \sum_{k\neq l}\frac{a_k(t_0)a_l(t_0)\sigma^{1/2}}{\left(4(\sigma^2+\phi''_k(t_0)^2)(\sigma^2+\phi''_l(t_0)^2)\right)^{1/4}}\exp\left\{-\pi\sigma\left(\frac{(\phi_k'(t_0)-\eta)^2}{\sigma^2+\phi''_k(t_0)^2}+\frac{(\phi_l'(t_0)-\eta)^2}{\sigma^2+\phi''_l(t_0)^2}\right)\right\},
\end{align*}
which is a smooth and bounded function of $\eta$ and is bounded by
\begin{align}\label{proof:Thm2:bound2}
\frac{c_2^2}{\sqrt{2}\sigma^{1/2}}\sum_{k\neq l}\exp\left\{-\frac{\pi\sigma}{\sigma^2+c_3^2}\left[(\phi_k'(t_0)-\eta)^2+(\phi_l'(t_0)-\eta)^2\right]\right\}.
\end{align}
Suppose the maximum of the right hand side is achieved when $\eta=\phi_{k_0}'(t_0)$, where $k_0=\arg\min_{l=1,\ldots,N-1}(\phi_{l+1}'(t_0)-\phi_{l}'(t_0))$. To bound (\ref{proof:Thm2:bound2}), denote $\gamma=\frac{\pi\sigma}{\sigma^2+c_3^2}$ to simplify the notation. Clearly, the summation in (\ref{proof:Thm2:bound2}) is thus bounded by
\begin{align*}
&2\sum_{l=1}^\infty e^{-l^2\gamma d^2}+2\sum_{k=1}^\infty e^{-k^2\gamma d^2}\sum_{l=0}^\infty e^{-l^2\gamma d^2}\leq  2(Q+1)S,
\end{align*}
where $Q=\int_0^\infty e^{-l^2\gamma d^2t}\ud t=\frac{\sqrt{\pi}}{2\gamma d^2}$ and $S=\int_1^\infty e^{-l^2\gamma d^2t}\ud t\leq \frac{d\sqrt{\gamma}}{e^{-\gamma d^2}}$ . Note that here we take the bounds $\sum_{l=0}^\infty e^{-l^2\gamma d^2}\leq Q$ and $\sum_{l=1}^\infty e^{-l^2\gamma d^2}\leq S$. Thus, we conclude that the interference term is bounded by $\frac{\sqrt{2}c_2^2}{\sigma^{1/2}}(Q+1)S$. To finish the proof, we require the interference term to be bounded by $\epsilon$, which leads to the following bound of $d$ when we take $\sigma=c_3$:
\begin{align}\label{proof:Thm2:dlowerbound}
d\geq \sqrt{2\ln c_2+\frac{1}{2}\ln c_3-\ln \epsilon}.
\end{align}
We have finished the proof.
\end{proof}

Since $t_0$ is arbitrary in the above argument and the spectrogram of a function is unique, we have $|V^{(g)}_{f}(t_0,\eta)|^2=|V^{(g)}_{\tilde{f}}(t_0,\eta)|^2$ and hence
\begin{equation}\label{proof:identifiability:multiple:1}
|L(t_0,\eta)-\widetilde{L}(t_0,\eta)|=O(\epsilon).
\end{equation}
With the above claim, we now show $M=N$. 
\begin{lemma}\label{proof:Theorem2:claim3}
$M=N$.
\end{lemma}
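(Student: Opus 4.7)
\textbf{Proof Proposal for Lemma~\ref{proof:Theorem2:claim3}.}

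The plan is to compare the two Gaussian peak approximations $L(t_0,\eta)$ and $\widetilde{L}(t_0,\eta)$ at the peak locations and use the separation condition (\ref{definition:adaptiveHarmonicMultiple}) to force a bijection between the two sets of peaks. Fix any time $t_0\in\RR$. By (\ref{proof:identifiability:multiple:1}), we have $|L(t_0,\eta)-\widetilde{L}(t_0,\eta)|=O(\epsilon)$ for every $\eta$.

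First, I would lower-bound $L(t_0,\phi_l'(t_0))$. The $l$-th summand contributes at least $c_1^2\sqrt{\sigma/(2(\sigma^2+c_3^2))}=:\kappa>0$, because $a_l(t_0)\ge c_1$ and $|\phi_l''(t_0)|\le c_3$; all remaining summands are nonnegative. Hence $\widetilde{L}(t_0,\phi_l'(t_0))\ge \kappa-O(\epsilon)$.

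Next, I would argue that this forces some $\varphi_k'(t_0)$ to lie close to $\phi_l'(t_0)$. Indeed, $\widetilde{L}$ is a sum of at most $M\le (c_2-c_1)/d$ terms, each of the form
\[
A_k^2(t_0)\sqrt{\tfrac{\sigma}{2(\sigma^2+\varphi_k''(t_0)^2)}}\exp\!\left\{-\tfrac{2\pi\sigma(\varphi_k'(t_0)-\eta)^2}{\sigma^2+\varphi_k''(t_0)^2}\right\}\le c_2^2\sqrt{\tfrac{\sigma}{2\sigma^2}}\exp\!\left\{-\tfrac{2\pi\sigma(\varphi_k'(t_0)-\eta)^2}{\sigma^2+c_3^2}\right\}.
\]
If \emph{every} $\varphi_k'(t_0)$ were at distance at least some threshold $\rho$ from $\phi_l'(t_0)$, where $\rho$ is chosen so that $\frac{c_2-c_1}{d}\cdot c_2^2\sqrt{\sigma/(2\sigma^2)}\exp\{-2\pi\sigma\rho^2/(\sigma^2+c_3^2)\}<\kappa/2$, then $\widetilde{L}(t_0,\phi_l'(t_0))<\kappa/2$, contradicting the lower bound once $\epsilon$ is small. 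Hence there exists $k=k(l)\in\{1,\dots,M\}$ with $|\varphi_{k(l)}'(t_0)-\phi_l'(t_0)|<\rho$. The separation $\varphi_{k+1}'(t_0)-\varphi_k'(t_0)>d$, together with taking $d$ large enough relative to $\rho$ (which is controlled by $c_1,c_2,c_3,\epsilon$ and is well within the bound (\ref{proof:Thm2:dlowerbound}) used earlier), ensures that this $k(l)$ is \emph{unique}.

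Symmetrically, swapping the roles of $\phi$ and $\varphi$ and of $L$ and $\widetilde{L}$, each $\varphi_k'(t_0)$ yields a unique $l=l(k)$ with $|\phi_{l(k)}'(t_0)-\varphi_k'(t_0)|<\rho$. A second use of the separation condition on both sides shows that $k\mapsto l(k)$ and $l\mapsto k(l)$ are mutually inverse bijections between $\{1,\dots,M\}$ and $\{1,\dots,N\}$; consequently $M=N$. The main technical point will be balancing the two small quantities, namely the $O(\epsilon)$ spectrogram mismatch and the Gaussian tail decay governed by $d$, so that the threshold $\rho$ is strictly less than $d$; once the lower bound (\ref{proof:Thm2:dlowerbound}) on $d$ is in force this is automatic.
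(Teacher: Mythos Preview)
Your proposal is correct and follows essentially the same strategy as the paper: use the Gaussian-sum approximations $L(t_0,\cdot)$ and $\widetilde L(t_0,\cdot)$ from Lemma~\ref{proof:Theorem2:claim2}, lower-bound each of them at the respective IF values $\phi_l'(t_0)$ and $\varphi_k'(t_0)$ by a fixed constant depending only on $c_1,c_2,c_3$, and then invoke the separation condition so that the $O(\epsilon)$ closeness forces the numbers of peaks to coincide. The paper's version is terser---it phrases this via the disjoint ``large'' intervals $I_l(t_0)$ and $J_l(t_0)$ rather than your explicit bijection $l\leftrightarrow k$---but the content is the same, and your quantitative check that $\rho$ can be taken strictly below $d/2$ under (\ref{proof:Thm2:dlowerbound}) simply fills in what the paper leaves implicit.
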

\begin{proof}
With $\sigma=c_3$, by Lemma \ref{proof:Theorem2:claim2}, for each $l=1,\ldots, N$, there exists a subinterval $I_l(t_0)$ around $\phi'_l(t_0)$ so that on $I_l(t_0)$, $L(t_0,\eta)>\frac{a_l^2(t_0)c_3^2}{2\sqrt{2(c_3^2+\phi''_l(t_0)^2)}}>\frac{c^2_1c_3^2}{2\sqrt{2c_3^2+2c_2^2}}$. 
Similarly, for each $l=1,\ldots,M$, there exists a subinterval $J_l(t_0)$ around $\varphi'_l(t_0)$ so that on $J_l(t_0)$, $\widetilde{L}(t_0,\eta)>\frac{A_l^2(t_0)c_3^2}{2\sqrt{2(c_3^2+\varphi''_l(t_0)^2)}}>\frac{c^2_1c_3^2}{2\sqrt{2c_3^2+2c_2^2}}$. 
Thus, when $\epsilon$ is small enough, in particular, $\epsilon\ll\frac{c^2_1c_3^2}{2\sqrt{2c_3^2+2c_2^2}}$, the equality in (\ref{proof:identifiability:multiple:1}) cannot hold if $M\neq N$. 
\end{proof} 
With this claim, we obtain the first part of the proof, and hence the equality
\begin{align}\label{proof:Theorem2:f:expansion}
f(t)=\sum_{l=1}^Na_l(t)\cos [2\pi\phi_l(t)]=\sum_{l=1}^NA_l(t)\cos[2\pi\varphi_l(t)]\in \mathcal{Q}^{c_1,c_2,c_3}_{\epsilon,d}.
\end{align}
Now we proceed to finish the proof. Note that it is also clear that the sets $I_l(t_0)$ and $J_l(t_0)$ defined in the proof of Lemma \ref{proof:Theorem2:claim3} satisfy that $I_l(t_0)\cap I_k(t_0)= \emptyset$ for all $l\neq k$. Also, $I_l(t_0)\cap J_l(t_0)\neq \emptyset$ and $I_l(t_0)\cap J_k(t_0)=\emptyset$ for all $l\neq k$. Indeed, if $k=l+1$ and we have $I_l(t_0)\cap J_{l+1}(t_0)\neq\emptyset$, then $L(t_0,\eta)> \frac{c^2_1c_3^2}{2\sqrt{2c_3^2+2c_2^2}}$ on $J_{l+1}(t_0)\backslash I_{l}(t_0)$, which leads to the contradiction. By the ordering of $\phi'_l(t_0)$ and hence the ordering of $I_l(t_0)$, we have the result.

Take $\ell=1$ and $\eta=\phi'_1(t_0)$. By Lemma \ref{proof:Theorem2:claim2}, when $d$ is large enough, on $I_1(t_0)$ we have 
\begin{align}\label{proof:Thm2:Aadifferecne}
\frac{a^2_1(t_0)}{\sqrt{2(1+\phi''_1(t_0)^2)}}= \frac{A^2_1(t_0)}{\sqrt{2(1+\varphi''_1(t_0)^2)}}\exp\left\{-\frac{2\pi c_3(\varphi_1'(t_0)-\phi_1'(t_0))^2}{c_3^2+\varphi''_1(t_0)^2}\right\}+O(\epsilon),
\end{align}
which leads to the fact that
\begin{align}\label{proof:Thm2:Aadifferecne2}
\left|\frac{a^2_1(t_0)c_3^2}{\sqrt{2(c_3^2+\phi''_1(t_0)^2)}}-\frac{A^2_1(t_0)c_3^2}{\sqrt{2(c_3^2+\varphi''_1(t_0)^2)}}\right|=O(\epsilon).
\end{align}
Indeed, without loss of generality, assume $\frac{a^2_1(t_0)c_3^2}{\sqrt{2(c_3^2+\phi''_1(t_0)^2)}}\geq\frac{A^2_1(t_0)c_3^2}{\sqrt{2(c_3^2+\varphi''_1(t_0)^2)}}$ and we have 
\begin{align*}
&\frac{a^2_1(t_0)c_3^2}{\sqrt{2(c_3^2+\phi''_1(t_0)^2)}}-\frac{A^2_1(t_0)c_3^2}{\sqrt{2(c_3^2+\varphi''_1(t_0)^2)}}\\
\leq\,&\frac{a^2_1(t_0)c_3^2}{\sqrt{2(c_3^2+\phi''_1(t_0)^2)}}-\frac{A^2_1(t_0)c_3^2}{\sqrt{2(c_3^2+\varphi''_1(t_0)^2)}}\exp\left\{-\frac{2\pi c_3(\varphi_1'(t_0)-\phi_1'(t_0))^2}{c_3^2+\varphi''_1(t_0)^2}\right\} =O(\epsilon)
\end{align*} 
by (\ref{proof:Thm2:Aadifferecne}) since $0$ is the unique maximal point of the chosen Gaussian function. 

\begin{lemma}\label{proof:Theorem2:claim4}
$|\phi'_\ell(t)-\varphi'_\ell(t)|=O(\sqrt{\epsilon})$ for all time $t\in\RR$ and $\ell=1,\ldots,N$.
\end{lemma}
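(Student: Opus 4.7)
The plan is to extract the estimate $|\phi'_\ell - \varphi'_\ell| = O(\sqrt{\epsilon})$ directly from the already-derived identities (\ref{proof:Thm2:Aadifferecne}) and (\ref{proof:Thm2:Aadifferecne2}). The first identity says that the Gaussian peak value of one spectrogram evaluated at the maximum of the other is close to the true peak value, and the second says that the two peak values themselves agree up to $O(\epsilon)$. Subtracting one from the other should isolate the Gaussian damping factor and force it to be close to $1$, which in turn forces the frequency shift to be small.

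Concretely, I would first take $\eta = \phi'_1(t_0)$ and subtract (\ref{proof:Thm2:Aadifferecne2}) from (\ref{proof:Thm2:Aadifferecne}), treating $\sigma = c_3$ consistently. This yields
\begin{align*}
\frac{A_1^2(t_0)}{\sqrt{2(c_3^2 + \varphi''_1(t_0)^2)}}\left[1 - \exp\left\{-\frac{2\pi c_3\,(\varphi'_1(t_0) - \phi'_1(t_0))^2}{c_3^2 + \varphi''_1(t_0)^2}\right\}\right] = O(\epsilon).
\end{align*}
Using $A_1(t_0) \geq c_1$ and $|\varphi''_1(t_0)| \leq c_3$, the leading prefactor is bounded below by the universal constant $c_1^2/\sqrt{4c_3^2}$ depending only on the model parameters. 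Hence the bracketed quantity itself is $O(\epsilon)$. Since $1 - e^{-x} \leq C\epsilon$ forces $e^{-x} \geq 1 - C\epsilon$, which for sufficiently small $\epsilon$ gives $x \leq -\ln(1 - C\epsilon) = O(\epsilon)$, I would conclude
\begin{align*}
\frac{2\pi c_3\,(\varphi'_1(t_0) - \phi'_1(t_0))^2}{c_3^2 + \varphi''_1(t_0)^2} = O(\epsilon),
\end{align*}
and bounding the denominator above by $2c_3^2$ and taking a square root gives $|\varphi'_1(t_0) - \phi'_1(t_0)| = O(\sqrt{\epsilon})$.

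For general $\ell$, the same argument applies by choosing $\eta = \phi'_\ell(t_0) \in I_\ell(t_0)$ and invoking Lemma \ref{proof:Theorem2:claim2}. At this $\eta$, the $\ell$-th term dominates both $L(t_0,\eta)$ and $\widetilde{L}(t_0,\eta)$ while the contributions from terms with $k \neq \ell$ are absorbed into $O(\epsilon)$ by the same Gaussian-decay plus frequency-separation estimate that produced the lower bound (\ref{proof:Thm2:dlowerbound}) on $d$. Since $t_0 \in \RR$ is arbitrary and every constant in the $O(\epsilon)$ bound depends only on $c_1, c_2, c_3$ and $d$, uniformity in $t$ and $\ell$ is automatic.

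The main obstacle is minimal: the hard analytic work has already been done in Lemma \ref{proof:Theorem2:claim2}, so the remainder is algebraic bookkeeping. The one point requiring some care is ensuring the other components do not pollute the estimate when we localize at $\eta = \phi'_\ell(t_0)$; this is handled exactly as in the interference bound in the proof of Lemma \ref{proof:Theorem2:claim2}, where the Gaussian tails combined with the separation $\phi'_k(t_0) - \phi'_\ell(t_0) \geq d\,|k - \ell|$ yield contributions of order $\epsilon$ thanks to the choice $d \geq \sqrt{2\ln c_2 + \tfrac{1}{2}\ln c_3 - \ln \epsilon}$.
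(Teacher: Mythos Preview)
Your proposal is correct and follows essentially the same route as the paper: combine (\ref{proof:Thm2:Aadifferecne}) and (\ref{proof:Thm2:Aadifferecne2}) to isolate the Gaussian damping factor, use the lower bound on the prefactor coming from $A_\ell\geq c_1$ and $|\varphi''_\ell|\leq c_3$, deduce that the exponent is $O(\epsilon)$, and take a square root. Your write-up is in fact slightly more explicit than the paper's (e.g.\ the step $1-e^{-x}=O(\epsilon)\Rightarrow x=O(\epsilon)$ and the treatment of the cross terms for general $\ell$), but the underlying argument is identical.
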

\begin{proof}
Fix $t_0\in\RR$ and $\ell=1$. By (\ref{proof:Thm2:Aadifferecne}), (\ref{proof:Thm2:Aadifferecne2}) and the conditions of $\mathcal{Q}^{c_1,c_2,c_3}_{\epsilon,d}$, on $I_1(t_0)$ we have
\[
\frac{A^2_1(t_0)c_3^2}{\sqrt{2(c_3^2+\varphi''_1(t_0)^2)}}\left|1-\exp\left\{-\frac{2\pi c_3(\varphi_1'(t_0)-\phi_1'(t_0))^2}{c_3^2+\varphi''_1(t_0)^2}\right\}\right|=O(\epsilon).
\]
Due to the fact that the Gaussian function monotonically decreases as $\frac{2\pi c_3(\varphi_1'(t_0)-\phi_1'(t_0))^2}{c_3^2+\varphi''_1(t_0)^2}>0$, we have 
\[
\frac{(\varphi_1'(t_0)-\phi_1'(t_0))^2}{c_3^2+\varphi''_1(t_0)^2}=O(\epsilon).
\]
%by the Taylor expansion. 
Since $\varphi''_1$ is uniformly bounded by $c_2$, we know
\[
|\varphi_1'(t_0)-\phi_1'(t_0)|=O(\sqrt{\epsilon}). 
\]
By the same argument, we know that $|\varphi_l'(t)-\phi_l'(t)|=O(\sqrt{\epsilon})$ for all $l=1,\ldots,N$ and $t\in\RR$. 
\end{proof}

\begin{lemma}\label{proof:Theorem2:claim5}
$|\phi''_\ell(t)-\varphi''_\ell(t)|=O(\sqrt{\epsilon})$ for all time $t\in\RR$ and $\ell=1,\ldots,N$.
\end{lemma}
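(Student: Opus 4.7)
The plan is to combine the uniform bound just established in Lemma~\ref{proof:Theorem2:claim4}, namely $\|\phi'_\ell-\varphi'_\ell\|_\infty = O(\sqrt{\epsilon})$, with the growth condition $|\phi'''_\ell|,|\varphi'''_\ell| \leq \epsilon c_2$ built into $\mathcal{Q}^{c_1,c_2,c_3}_{\epsilon,d}$, via a Landau--Kolmogorov-type finite-difference interpolation. The key observation is that $e_\ell(t) := \phi'_\ell(t) - \varphi'_\ell(t)$ lies in $C^2(\RR)$; its sup norm is already $O(\sqrt{\epsilon})$, and its second derivative $e_\ell'' = \phi'''_\ell - \varphi'''_\ell$ satisfies $\|e_\ell''\|_\infty \leq 2\epsilon c_2 = O(\epsilon)$. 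The intermediate quantity $e_\ell'(t) = \phi''_\ell(t) - \varphi''_\ell(t)$ is precisely what we need to bound, and classical interpolation gives it to us essentially for free.

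Concretely, fix $\ell$ and $t\in\RR$. For any $h>0$, Taylor's theorem applied separately to $\phi'_\ell$ and $\varphi'_\ell$ yields
\[
e_\ell(t+h) - e_\ell(t) = h\bigl(\phi''_\ell(t) - \varphi''_\ell(t)\bigr) + \tfrac{h^2}{2}\bigl(\phi'''_\ell(\xi_1) - \varphi'''_\ell(\xi_2)\bigr)
\]
for appropriate $\xi_1,\xi_2\in(t,t+h)$. Rearranging, taking absolute values, and inserting the two bounds above, we obtain
\[
|\phi''_\ell(t) - \varphi''_\ell(t)| \leq \frac{2\|e_\ell\|_\infty}{h} + h\,\epsilon\, c_2 \leq \frac{C\sqrt{\epsilon}}{h} + h\,\epsilon\, c_2,
\]
where $C$ is a universal constant depending only on the model parameters of $\mathcal{Q}^{c_1,c_2,c_3}_{\epsilon,d}$. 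Even the crude choice $h=1$ already yields $|\phi''_\ell(t) - \varphi''_\ell(t)| = O(\sqrt{\epsilon})$ uniformly in $t$ and $\ell$, which is exactly the claim; optimising over $h$ (taking $h\sim\epsilon^{-1/4}$) would in fact give the strictly better rate $O(\epsilon^{3/4})$, but it is not needed here.

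The main potential pitfall is to attempt to push the claim through the spectrogram-matching machinery used throughout the preceding lemmas. If one tried to extract $\phi''_\ell - \varphi''_\ell$ directly from the identity $L(t_0,\eta) = \widetilde{L}(t_0,\eta) + O(\epsilon)$ of Lemma~\ref{proof:Theorem2:claim2}, the obstacle is that the Gaussian widths $c_3^2 + (\phi''_\ell)^2$ are symmetric in $\phi''_\ell$, so that route can at best control $\bigl||\phi''_\ell| - |\varphi''_\ell|\bigr|$ and loses the sign information. The interpolation argument above sidesteps this entirely by exploiting the \emph{pointwise-in-$t$} closeness of $\phi'_\ell$ and $\varphi'_\ell$ already proved in Lemma~\ref{proof:Theorem2:claim4}, together with the $C^3$ regularity and the third-derivative bound hypothesized in the model class.
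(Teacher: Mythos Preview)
Your proof is correct and follows essentially the same route as the paper's: both arguments combine the bound $\|\phi'_\ell-\varphi'_\ell\|_\infty=O(\sqrt{\epsilon})$ from the preceding lemma with the third-derivative control $|\phi'''_\ell|,|\varphi'''_\ell|=O(\epsilon)$ via a first-order Taylor/finite-difference step over an interval of length $1$. The paper writes this out using the fundamental theorem of calculus twice (integrating $\phi''-\varphi''$ over $[t_0,t_0+1]$ and then expanding that integrand), whereas you package it as a Landau--Kolmogorov interpolation with Lagrange remainder; the two are equivalent, and your observation that optimising $h$ yields $O(\epsilon^{3/4})$ is a pleasant bonus not present in the original.
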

\begin{proof}
Fix $t_0\in\RR$ and $\ell=1$. By the assumption that $\phi'''_1(t_0)=O(\epsilon)$ and $\varphi'''_1(t_0)=O(\epsilon)$, we claim that $|\phi''_1(t_0)-\varphi''_1(t_0)|=O(\sqrt{\epsilon})$ holds. 
Indeed, we have
\[
\phi_1'({t_0+1})=\phi_1'(t_0)+\int_{t_0}^{{t_0+1}}\phi''_1(s)\ud s\quad\mbox{and}\quad\varphi_1'({t_0+1})=\varphi_1'(t_0)+\int_{t_0}^{{t_0+1}}\varphi''_1(s)\ud s,
\]
which leads to the relationship
\[
\phi_1'({t_0+1})-\varphi_1'({t_0+1})=\phi_1'(t_0)-\varphi_1'(t_0)+\int_{t_0}^{{t_0+1}}(\phi''_1(s)-\varphi''_1(s))\ud s.
\]
Therefore, by the assumption that $\phi'''_1(t_0)=O(\epsilon)$ and $\varphi'''_1(t_0)=O(\epsilon)$, we have
\begin{align*}
&\int_{t_0}^{{t_0+1}}(\phi''_1(s)-\varphi''_1(s))\ud s\\
=&\,\int_{t_0}^{{t_0+1}}\left(\phi''_1(t_0)-\varphi''_1(t_0)+\int_{t_0}^s\left(\phi'''_1(x)-\varphi'''_1(x)\right)\ud x\right)\ud s\\
=&\,\phi''_1(t_0)-\varphi''_1(t_0)+O(\epsilon),
\end{align*}
which means that $|\phi''_1(t_0)-\varphi''_1(t_0)|=O(\sqrt{\epsilon})$ since $|\phi_1'({t_0+1})-\varphi_1'({t_0+1})|=O(\sqrt{\epsilon})$ and $|\phi_1'({t_0})-\varphi_1'({t_0})|=O(\sqrt{\epsilon})$. By the same argument, we know that $|\varphi_l''(t)-\phi_l''(t)|=O(\sqrt{\epsilon})$ for all $l=1,\ldots,N$ and $t\in\RR$.
\end{proof}

\begin{lemma}\label{proof:Theorem2:claim6}
$|a_\ell(t)-A_\ell(t)|=O(\sqrt{\epsilon})$ for all time $t\in\RR$ and $\ell=1,\ldots,N$.
\end{lemma}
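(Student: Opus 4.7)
My plan is to read the desired bound directly off of the spectrogram identity (\ref{proof:Thm2:Aadifferecne2}) obtained earlier, combined with the chirp-factor estimate from Lemma~\ref{proof:Theorem2:claim5}. The derivation leading to (\ref{proof:Thm2:Aadifferecne2}) was written only for $\ell=1$, but it relies solely on the properties $I_1(t_0)\cap J_k(t_0)=\emptyset$ for $k\neq 1$ and $I_1(t_0)\cap J_1(t_0)\neq\emptyset$, both of which (by the argument given just before Lemma~\ref{proof:Theorem2:claim4}) extend to every $\ell\in\{1,\ldots,N\}$. Evaluating $L(t_0,\cdot)-\widetilde{L}(t_0,\cdot)=O(\epsilon)$ at $\eta=\phi'_\ell(t_0)\in I_\ell(t_0)$ therefore gives, uniformly in $\ell$ and $t_0$,
\[
\left|\frac{a^2_\ell(t_0)c_3^2}{\sqrt{2(c_3^2+\phi''_\ell(t_0)^2)}}-\frac{A^2_\ell(t_0)c_3^2}{\sqrt{2(c_3^2+\varphi''_\ell(t_0)^2)}}\right|=O(\epsilon).
\]

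Next I would remove the mismatch between $\phi''_\ell$ and $\varphi''_\ell$ in the two denominators. Since $|\phi''_\ell|,|\varphi''_\ell|\leq c_3$, the map $x\mapsto 1/\sqrt{2(c_3^2+x^2)}$ is Lipschitz on $[-c_3,c_3]$; combined with $|\phi''_\ell(t_0)-\varphi''_\ell(t_0)|=O(\sqrt{\epsilon})$ from Lemma~\ref{proof:Theorem2:claim5} and the uniform bound $A^2_\ell(t_0)\leq c_2^2$, this yields
\[
\left|\frac{A^2_\ell(t_0)c_3^2}{\sqrt{2(c_3^2+\varphi''_\ell(t_0)^2)}}-\frac{A^2_\ell(t_0)c_3^2}{\sqrt{2(c_3^2+\phi''_\ell(t_0)^2)}}\right|=O(\sqrt{\epsilon}).
\]
Combining the two displays via the triangle inequality and multiplying through by the bounded quantity $\sqrt{2(c_3^2+\phi''_\ell(t_0)^2)}/c_3^2$, I obtain $|a^2_\ell(t_0)-A^2_\ell(t_0)|=O(\sqrt{\epsilon})$.

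Finally, I would factor $a^2_\ell-A^2_\ell=(a_\ell-A_\ell)(a_\ell+A_\ell)$ and apply the amplitude lower bound $a_\ell(t_0)+A_\ell(t_0)\geq 2c_1>0$ from the boundedness condition (\ref{AHM:Condition:Boundedness}) to conclude $|a_\ell(t_0)-A_\ell(t_0)|\leq |a^2_\ell(t_0)-A^2_\ell(t_0)|/(2c_1)=O(\sqrt{\epsilon})$, as required. The main obstacle is bookkeeping: one has to verify that every implicit constant depends only on $c_1,c_2,c_3,d$, and to check that the degenerate regime $|\phi''_\ell(t_0)|\leq\epsilon|\phi'_\ell(t_0)|$, in which the phase is expanded only to first order (as indicated after Lemma~\ref{lemma:identifiability:Wf_expansion}), is handled analogously with the chirp factor effectively replaced by $0$ up to an $O(\epsilon)$ perturbation.
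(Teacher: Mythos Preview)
Your proposal is correct and follows essentially the same approach as the paper: both combine the spectrogram identity (\ref{proof:Thm2:Aadifferecne2}) with the chirp-factor estimate of Lemma~\ref{proof:Theorem2:claim5} to pass from matching Gaussian peak heights to $|a_\ell-A_\ell|=O(\sqrt{\epsilon})$. The paper's proof is a single sentence asserting that (\ref{proof:Thm2:Aadifferecne2}) makes $|a_1-A_1|=O(\sqrt{\epsilon})$ equivalent to $|\phi''_1-\varphi''_1|=O(\sqrt{\epsilon})$, so your write-up is simply a more explicit unpacking of the same argument (Lipschitz replacement of $\varphi''_\ell$ by $\phi''_\ell$ in the denominator, then factoring $a_\ell^2-A_\ell^2$).
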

\begin{proof}
Fix $t_0\in\RR$ and $\ell=1$. From (\ref{proof:Thm2:Aadifferecne2}), it is clear that $|a_1(t_0)-A_1(t_0)|=O(\sqrt{\epsilon})$ if and only if $|\phi''_1(t_0)-\varphi''_1(t_0)|=O(\sqrt{\epsilon})$, so we obtain the claim by Lemma \ref{proof:Theorem2:claim5}.
Similar argument holds for all time $t\in\RR$ and $\ell=2,\ldots,N$. 
\end{proof}
Lastly, we show the difference of the phase functions. 
\begin{lemma}
$|\phi_\ell(t)-\varphi_\ell(t)|=O(\sqrt{\epsilon})$ for all time $t\in\RR$ and $\ell=1,\ldots,N$.
\end{lemma}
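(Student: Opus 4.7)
The plan is to extract $\phi_\ell(t_0)$ from the complex argument of the STFT at the carefully chosen point $(t_0,\phi_\ell'(t_0))$, and then compare with the analogous quantity arising from $\tilde f$. By Lemma~\ref{lemma:identifiability:Wf_expansion}, the STFT of $f$ with the Gaussian window $g(t)=(2c_3)^{1/4}\exp(-\pi c_3 t^2)$ agrees with that of the second-order chirp surrogate $f_{t_0}$ up to $O(\epsilon)$. A direct (and standard) Gaussian-times-chirp integration yields
\begin{align*}
V_{f_{t_0,\ell}}^{(g)}(t_0,\eta)=\frac{a_\ell(t_0)}{2}\,e^{-i2\pi\eta t_0}\,e^{i2\pi\phi_\ell(t_0)}\,\frac{(2c_3)^{1/4}}{\sqrt{c_3-i\phi''_\ell(t_0)}}\,\exp\!\left(-\frac{\pi(\phi'_\ell(t_0)-\eta)^2}{c_3-i\phi''_\ell(t_0)}\right),
\end{align*}
plus a negative-frequency companion which is negligible since $\phi'_\ell(t_0)\geq c_1>0$ places $\eta=\phi'_\ell(t_0)$ far from zero. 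Combining this with the Gaussian decay plus the separation condition $d\geq\sqrt{2\ln c_2+\tfrac12\ln c_3-\ln\epsilon}$ (as in Lemma~\ref{proof:Theorem2:claim2}) to kill the cross-component interference, I obtain
\begin{align*}
V_f^{(g)}(t_0,\phi'_\ell(t_0))=\frac{a_\ell(t_0)}{2}\,e^{-i2\pi\phi'_\ell(t_0)t_0}\,e^{i2\pi\phi_\ell(t_0)}\,\frac{(2c_3)^{1/4}}{\sqrt{c_3-i\phi''_\ell(t_0)}}+O(\epsilon).
\end{align*}

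Since $f=\tilde f$, the same STFT value equals the corresponding expression for $\tilde f$ evaluated at $\eta=\phi'_\ell(t_0)$. Using $|\varphi'_\ell(t_0)-\phi'_\ell(t_0)|=O(\sqrt{\epsilon})$ from Lemma~\ref{proof:Theorem2:claim4}, the Gaussian factor for the $\tilde f$ side becomes $1+O(\epsilon)$, so
\begin{align*}
V_{\tilde f}^{(g)}(t_0,\phi'_\ell(t_0))=\frac{A_\ell(t_0)}{2}\,e^{-i2\pi\phi'_\ell(t_0)t_0}\,e^{i2\pi\varphi_\ell(t_0)}\,\frac{(2c_3)^{1/4}}{\sqrt{c_3-i\varphi''_\ell(t_0)}}(1+O(\epsilon))+O(\epsilon).
\end{align*}
Equating the two expressions, noting that the common magnitude is bounded below by a positive constant depending only on $c_1,c_2,c_3$, one can divide through safely and take arguments. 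Using $|a_\ell(t_0)-A_\ell(t_0)|=O(\sqrt{\epsilon})$ (Lemma~\ref{proof:Theorem2:claim6}) and $|\phi''_\ell(t_0)-\varphi''_\ell(t_0)|=O(\sqrt{\epsilon})$ (Lemma~\ref{proof:Theorem2:claim5}), smoothness of $\arg(c_3-ix)$ gives $\tfrac12\arg(c_3-i\phi''_\ell(t_0))-\tfrac12\arg(c_3-i\varphi''_\ell(t_0))=O(\sqrt{\epsilon})$. Hence
\begin{align*}
2\pi\big(\phi_\ell(t_0)-\varphi_\ell(t_0)\big)=O(\sqrt{\epsilon})\pmod{2\pi}.
\end{align*}

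Finally, to pass from the modular bound to the pointwise bound, I would invoke continuity: $\phi_\ell-\varphi_\ell$ is a continuous function of $t$, and for $\epsilon$ small enough the $O(\sqrt{\epsilon})$-neighborhoods of distinct multiples of $1$ are disjoint, so $\phi_\ell(t)-\varphi_\ell(t)$ stays in a single such neighborhood on all of $\RR$; absorbing this fixed integer shift into the asserted ``up to a global factor $2n\pi$'' proves the claim for every $t\in\RR$ and every $\ell=1,\ldots,N$.

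The main obstacle is ensuring that the $O(\epsilon)$ error in the complex STFT translates into a genuine $O(\sqrt{\epsilon})$ bound on the argument and not something worse. This requires (i) a uniform lower bound on $|V_f^{(g)}(t_0,\phi'_\ell(t_0))|$ (which follows from $a_\ell\geq c_1$ and the uniform upper bound on $\phi''_\ell$), and (ii) careful bookkeeping of the negative-frequency and cross-term contributions, which is exactly where the separation hypothesis $d\geq\sqrt{2\ln c_2+\tfrac12\ln c_3-\ln\epsilon}$ is used in the same way as in the proof of Lemma~\ref{proof:Theorem2:claim2}. The rest is a routine application of smoothness of $\arg$ and continuity.
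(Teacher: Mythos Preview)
Your argument is correct and takes a genuinely different route from the paper. The paper proves this lemma by a time-domain contradiction argument: using $|a_\ell-A_\ell|=O(\sqrt{\epsilon})$ it rewrites $\sum_\ell a_\ell\cos(2\pi\phi_\ell)=\sum_\ell a_\ell\cos(2\pi(\phi_\ell+\alpha_\ell))+O(\sqrt{\epsilon})$ with $\alpha_\ell=\varphi_\ell-\phi_\ell$, assumes some $\alpha_k(t_0)$ is not $O(\sqrt{\epsilon})$, and then exploits the frequency separation $\phi'_L>\phi'_k+d$ to manufacture a nearby time $t_1$ at which the two sums differ by more than $O(\sqrt{\epsilon})$, a contradiction. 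Your approach instead stays in the STFT framework already set up for the preceding lemmas, but uses the \emph{complex} value $V_f^{(g)}(t_0,\phi'_\ell(t_0))$ rather than only the spectrogram: the Gaussian--chirp integral reveals $e^{i2\pi\phi_\ell(t_0)}$ explicitly, and the previously established $O(\sqrt{\epsilon})$ bounds on $\phi'_\ell-\varphi'_\ell$, $\phi''_\ell-\varphi''_\ell$, and $a_\ell-A_\ell$ let you read off the phase difference directly.

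Your route is arguably cleaner and more systematic, since it reuses exactly the same machinery (Lemma~\ref{lemma:identifiability:Wf_expansion} and the separation estimate of Lemma~\ref{proof:Theorem2:claim2}) rather than introducing a new time-domain trick; the paper's contradiction argument, by contrast, is somewhat sketchy as written. One caveat worth flagging: your claim that the negative-frequency companion is ``negligible'' at $\eta=\phi'_\ell(t_0)$ gives a contribution of size roughly $\exp(-2\pi c_1^2/c_3)$, which is a fixed constant, not automatically $O(\epsilon)$, unless one imposes $c_1^2\gtrsim c_3\ln(1/\epsilon)$. The paper's own computation of $WV_{f_{t_0,l}}$ in Lemma~\ref{proof:Theorem2:claim2} silently makes the same simplification (treating the real cosine as if it were the analytic signal), so you are on equal footing; but it is worth being aware that both arguments tacitly assume the carrier frequencies are large enough relative to the window bandwidth.
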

\begin{proof}
By (\ref{proof:Theorem2:f:expansion}) and the fact that $|a_l(t)-A_l(t)|=O(\sqrt{\epsilon})$, we have for all $t\in\RR$, 
\[
\sum_{l=1}^Na_l(t)\cos [2\pi\phi_l(t)]=\sum_{l=1}^Na_l(t)\cos[2\pi(\phi_l(t)+\alpha_l(t))]+O(\sqrt{\epsilon}),
\]
where $\alpha_l\in C^3(\RR)$. Note that $\sum_{l=1}^Na_l(t)\cos[2\pi(\phi_l(t)+\alpha_l(t))]\in \mathcal{Q}^{c_1,c_2,c_3}_{\epsilon,d}$. Fix $t_0\in\RR$. 
Suppose there exists $t_0$ and the smallest number $k$ so that $\alpha_k(t_0)=O(\sqrt{\epsilon})$ up to multiples of $2\pi$ does not hold. Then there exists at least one $\ell\neq k$ so that $\alpha_\ell(t_0)=O(\sqrt{\epsilon})$ does not hold. Suppose $L>k$ is the largest integer that $\alpha_L(t_0)=O(\sqrt{\epsilon})$ does not hold. In this case, there exists $t_1>t_0$ so that $\sum_{l=1}^Na_l(t_1)\cos [2\pi\phi_l(t_1)]=\sum_{l=1}^Na_l(t_1)\cos[2\pi(\phi_l(t_1)+\alpha_l(t_1))]+O(\sqrt{\epsilon})$ does not hold. Indeed, as $\phi_L'(t_0)$ is higher than $\phi_k'(t_0)$ by at least $d$, we could find $t_1=\phi^{-1}_k(\phi_k(t_0)+c)$, where $0<c<\pi$, so that $\cos[2\pi(\phi_L(t_1)+\alpha_L(t_1))]-\cos[2\pi(\phi_L(t_1))]=\cos[2\pi(\phi_L(t_0)+\alpha_L(t_0))]-\cos[2\pi(\phi_L(t_0))]+O(\sqrt{\epsilon})$ does not hold while $\sum_{l\neq L}^Na_l(t)\cos [2\pi\phi_l(t)]=\sum_{l\neq L}^Na_l(t)\cos[2\pi(\phi_l(t)+\alpha_l(t))]+O(\sqrt{\epsilon})$ holds. We thus get a contradiction and hence the proof.
\end{proof}

\section{A convergence study of Algorithm~\ref{alg:alter}}
\label{appendix:convergence}

We provide here a simple convergence study of Algorithm~\ref{alg:alter}, based on the Zangwill's global convergence theorem~\cite{zangwill1969nonlinear} which can be stated as follow

\begin{theorem}
	Let $\mathcal{A}$ be an algorithm on $\mathcal{X}$, and suppose that, given $x_0 \in \mathcal{X}$, the
	sequence $\left\{x_k\right\}_{k=1}^{\infty}$ is generated and satisfies
	\[
		x_{k+1} \in \mathcal{A}\left(x_k\right)\ .
	\]
	Let a solution set $\Gamma$ be given, and suppose that
	\begin{enumerate}
		\item[(i)] the sequence $\left\{x_k\right\}_{k=0}^{\infty}\subset \mathcal{S}$ for $\mathcal{S}\subset \mathcal{X}$ a compact set.
		\item[(ii)] there is a continuous function $\mathcal{Z}$ on $\mathcal{X}$ such that 
		\begin{enumerate}
			\item if $x\notin\Gamma$, then $\mathcal{Z}(y)< \mathcal{Z}(x)$ for all $y\in\mathcal{A}(x)$
			\item if $x\in\Gamma$, then $\mathcal{Z}(y)\leq \mathcal{Z}(x)$ for all $y\in\mathcal{A}(x)$
		\end{enumerate}
		\item[(iii)] the mapping $\mathcal{A}$ is closed at all point $\mathcal{X}\ \Gamma$
	\end{enumerate}
	Then the limit of any convergent subsequence of $\left\{x_k\right\}_{k=0}^{\infty}$ is a solution, and $\mathcal{Z}_{k}\rightarrow\mathcal{Z}(x^*)$ for some $x^*\in\Gamma$.
\end{theorem}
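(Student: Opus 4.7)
The plan is to combine the monotone decrease of $\mathcal{Z}$ along iterates with the compactness of $\mathcal{S}$, and then use the closedness hypothesis (iii) to force every subsequential limit into $\Gamma$. First I would observe that condition (ii) (both cases combined) gives $\mathcal{Z}(x_{k+1})\leq\mathcal{Z}(x_k)$ for every $k$, so the real sequence $\{\mathcal{Z}(x_k)\}$ is monotonically nonincreasing. Because $\mathcal{Z}$ is continuous and $\{x_k\}\subset\mathcal{S}$ with $\mathcal{S}$ compact, $\mathcal{Z}(\mathcal{S})$ is compact and in particular bounded below, so $\mathcal{Z}(x_k)\downarrow z^*$ for some $z^*\in\RR$.

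Next, take any convergent subsequence $x_{k_j}\to x^*$; such a subsequence exists inside $\mathcal{S}$ by compactness, and $x^*\in\mathcal{S}$. Continuity of $\mathcal{Z}$ yields $\mathcal{Z}(x^*)=z^*$. The heart of the argument is to show $x^*\in\Gamma$. Suppose for contradiction that $x^*\notin\Gamma$. The shifted tail $\{x_{k_j+1}\}$ still lies in $\mathcal{S}$, hence admits a further convergent subsequence (which I relabel) with $x_{k_j+1}\to y^*\in\mathcal{S}$. Since $x_{k_j+1}\in\mathcal{A}(x_{k_j})$ and $\mathcal{A}$ is closed at $x^*$ by hypothesis (iii), passing to the limit in the pair $(x_{k_j},x_{k_j+1})\to(x^*,y^*)$ gives $y^*\in\mathcal{A}(x^*)$. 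Now applying the strict descent condition (ii)(a) at the non-solution point $x^*$ yields $\mathcal{Z}(y^*)<\mathcal{Z}(x^*)=z^*$. On the other hand, continuity and $x_{k_j+1}\to y^*$ give $\mathcal{Z}(x_{k_j+1})\to\mathcal{Z}(y^*)$, while the whole sequence $\mathcal{Z}(x_k)$ converges to $z^*$, whence $\mathcal{Z}(y^*)=z^*$, a contradiction. Therefore $x^*\in\Gamma$, and $\mathcal{Z}(x_k)\to z^*=\mathcal{Z}(x^*)$ as required.

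The main obstacle I anticipate is handling the closedness condition correctly when $\mathcal{A}$ is set-valued: one must first pass to a subsequence along which \emph{both} $x_{k_j}$ and $x_{k_j+1}$ converge, and then invoke (iii) precisely at the point $x^*$, which is why the hypothesis is phrased pointwise on $\mathcal{X}\setminus\Gamma$ rather than globally. A minor technical point in stating the conclusion is that distinct convergent subsequences could a priori have distinct limits in $\Gamma$; this causes no ambiguity because every such limit lies in $\Gamma$ and shares the common $\mathcal{Z}$-value $z^*$, which is what the second half of the conclusion asserts.
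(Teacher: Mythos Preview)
Your argument is correct and is essentially the classical proof of Zangwill's global convergence theorem: monotone decrease plus compactness gives a limit value $z^*$, and the closed-graph hypothesis combined with strict descent rules out any subsequential limit lying outside $\Gamma$. The handling of the set-valued map via a further subsequence of $\{x_{k_j+1}\}$ is exactly the right maneuver, and your remark about possibly distinct limits in $\Gamma$ sharing the common value $z^*$ is also accurate.

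That said, note that the paper does \emph{not} give a proof of this theorem at all: it is quoted as a known result from \cite{zangwill1969nonlinear}, and the surrounding appendix text only verifies that the hypotheses (i)--(iii) hold for the specific alternating algorithm of the paper (with $\mathcal{Z}=\mathcal{H}$ and $\Gamma$ the set of critical points). So there is no ``paper's own proof'' to compare against; you have supplied the standard textbook argument for a result the authors simply cite.
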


The algorithm $\mathcal{A}$ is here the alternating minimization Alg.~\ref{alg:alter}. The solution set $\Gamma$ is then naturally the set of 
the critical points of the functional $\mathcal{H}$. Equivalenly, $\Gamma$ is the set of the fixed point of Alg.~\ref{alg:alter}. Indeed, for
$\valpha$ being fixed, $\vF^*$ is a minimizer of $\mathcal{H}_{\valpha}$ is equivalent to $\vF^*$ is a fixed point of Alg.~\ref{alg:FISTA}~\cite{combettes2005signal}. The descent function $\mathcal{Z}$ is then naturraly the functional $\mathcal{H}$.

As we work in the finite dimensional case, the boundeness of the sequence and then point (i) of the Theorem is here direct consequences of point (iii). Moreover, thanks to the continuity of the soft-thresholding operator, the mapping $\mathcal{A}$ is continuous and point (iii) is also a direct consequence of point (iii).

Point (iii) of the theorem comes from the monotonic version of FISTA. Indeed, the very first iteration of FISTA is equivalent to a "simple" forward-backward step, which ensure the strict decreasing of the functional $\mathcal{H}_{\valpha}$ (see~\cite{tseng2010approximation}). Then, as $\valpha$ is the unique minimizer of the function $\mathcal{H}_{\vF}$, we have a sequence 
 $\left\{(\valpha_k,\vF_k)\right\}_{k=0}^{\infty}$ 
 such that	$\mathcal{H}\left(\valpha^{k+1},\vF^{k+1}\right) < \mathcal{H}\left(\valpha^{k},\vF^{k}\right)$ as soon as  $\left(\valpha^k,\vF^k\right)$ is not a critical point of $\mathcal{H}$.

\end{document}